\numberwithin{equation}{section}
\theoremstyle{plain}
\newcommand{\ep}{\epsilon}
\newcommand{\R}{\mathbb R}
\newcommand{\Z}{\mathbb Z}
\newtheorem{theorem}{Theorem}[section]
\newtheorem{lemma}[theorem]{Lemma}
\theoremstyle{definition}
\newtheorem{example}[theorem]{Example}
\newtheorem{remark}[theorem]{Remark}
\newtheorem{remarks}[theorem]{Remarks}
\newtheorem*{ack}{Acknowledgement}
\newenvironment{romenumerate}[1][0pt]{
\addtolength{\leftmargini}{#1}\begin{enumerate}
 }{\end{enumerate}}
\newcounter{oldenumi}
\newenvironment{romenumerateq}
{\setcounter{oldenumi}{\value{enumi}}
\begin{romenumerate} \setcounter{enumi}{\value{oldenumi}}}
{\end{romenumerate}}
\newcommand{\refT}[1]{Theorem~\ref{#1}}
\newcommand{\refL}[1]{Lemma~\ref{#1}}
\newcommand{\refE}[1]{Example~\ref{#1}}
\newcommand{\refR}[1]{Remark~\ref{#1}}
\newcommand{\refRR}[1]{Remarks~\ref{#1}}
\newcommand{\refS}[1]{Section~\ref{#1}}
\newcommand\marginal[1]{\marginpar{\raggedright\tiny #1}}
\newcommand\REM[1]{{\raggedright\texttt{[#1]}\par\marginal{XXX}}}
\newcommand\ooo{[0,\infty)}
\newcommand\ooox{[0,\infty]}
\newcommand\intoooo{\int_{-\infty}^\infty}
\newcommand\downto{\searrow}
\newcommand\upto{\nearrow}
\newcommand\gd{\delta}
\newcommand\gD{\Delta}
\newcommand\gf{\varphi}
\newcommand\gG{\Gamma}
\newcommand\gL{\Lambda}
\newcommand\go{\omega}
\newcommand\gs{\sigma}
\newcommand\gss{\sigma^2}
\newcommand\eps{\varepsilon}
\newcommand\cE{\mathcal E}
\newcommand\cF{\mathcal F}
\newcommand\cG{\mathcal G}
\newcommand\cL{{\mathcal L}}
\newcommand\bigpar[1]{\bigl(#1\bigr)}
\newcommand\lrpar[1]{\left(#1\right)}
\newcommand\xcpar[1]{\{#1\}}
\newcommand\set[1]{\ensuremath{\{#1\}}}
\newcommand\bigset[1]{\ensuremath{\bigl\{#1\bigr\}}}
\newcommand\Bigset[1]{\ensuremath{\Bigl\{#1\Bigr\}}}
\newcommand\supp{\operatorname{supp}}
\newcommand\suppoo{\operatorname{supp_\infty}}
\newcommand\E{\operatorname{\mathbb E{}}}
\renewcommand\P{\operatorname{\mathbb P{}}}
\newcommand\Var{\operatorname{Var}}
\newcommand\ntoo{\ensuremath{{n\to\infty}}}
\newcommand\ttoo{\ensuremath{{t\to\infty}}}
\newcommand\utoo{\ensuremath{{u\to\infty}}}
\newcommand{\tend}{\longrightarrow}
\newcommand\dto{\overset{\mathrm{d}}{\tend}}
\newcommand\bigabs[1]{\bigl|#1\bigr|}
\newcommand\ett[1]{\boldsymbol1\xcpar{#1}}
\newcommand\xo{x_0}
\newcommand\yo{y_0}
\newcommand\bmu{\overline\mu}
\newcommand\intR{\int_{\R}}
\newcommand\nbh{neighbourhood}
\newcommand\cc{C_{\mathrm c}}
\newcommand\ccp{\cc^+}
\newcommand\dd{\,\mathrm{d}}
\newcommand\bbm{[b_-^m,b_+^m]}
\newcommand\aax{(a_-,a_+)}
\newcommand\tnu{\widetilde\nu}
\newcommand\floor[1]{\lfloor#1\rfloor}
\newcommand\norm[1]{\|#1\|}
\newcommand\normq[1]{\norm{#1}_{2}}
\newcommand\atm{{A_{t-}}}
\newcommand\aoo{{A_{\infty}}}
\newcommand\bn{B^n}
\newcommand\bni{B^{n-1}}
\newcommand\pin{\Pi^n}
\newcommand\pini{\Pi^{n-1}}
\newcommand\gdni{\gD_{n+1}}
\newcommand\mxo{M_{\xo}}
\newcommand\sni{S^{n-1}}
\newcommand\bd{\partial}
\newcommand\bb{(b_i)_1^n}
\newcommand\ppi{(\pi_i)_0^{n+1}}
\newcommand\tH{\tilde H}
\newcommand\kk{\kappa}
\newcommand\xs{x}
\newcommand\xsoo{x^\infty}
\newcommand\xx{\xi}
\newcommand\HH{{\tilde H}}
\newcommand\hoo{H_{\suppoo\nu}}
\xdef\klockan{\the\count1.0\the\count255}
\xdef\klockan{\the\count1.\the\count255}\fi
\title{Can time-homogeneous diffusions
produce any distribution?}
\author{Erik Ekstr\"om}
\address{Department of Mathematics, Uppsala University, Box 480, SE-751 06
  Uppsala, Sweden}
\email{erik.ekstrom@math.uu.se}
\author{David Hobson}
\address{Department of Statistics,
Zeeman Building,
University of Warwick,
Coventry, CV4 7AL, UK}
\email{D.Hobson@warwick.ac.uk}
\author{Svante Janson}
\address{Department of Mathematics, Uppsala University, Box 480, SE-751 06
  Uppsala, Sweden}
\email{svante.janson@math.uu.se}
\author{Johan Tysk}
\address{Department of Mathematics, Uppsala University, Box 480, SE-751 06 Uppsala, Sweden}
\email{johan.tysk@math.uu.se}
\date{March 21, 2011} 
\begin{document}

\begin{abstract} 
Given a centred distribution, can one find a time-homogeneous martingale 
diffusion starting at zero which has the given law at time 1?
We answer the question affirmatively if generalized diffusions are allowed.
\end{abstract}

\maketitle

\section{Introduction}

Consider a distribution with finite mean on the real line. Can this
distribution be recovered as the distribution of a diffusion
at time 1? Clearly not if the support of the distribution
is disconnected since diffusions are continuous, but the answer 
is yes, 
and in many ways, if the target distribution is sufficiently regular.
What if we also require our process to be a martingale or to be 
time-homogeneous? Again, easy constructions exist (see 
Remark~\ref{rem:nonexample} 
below) to show that a suitable process exists. But, what if we require 
our process to be both time-homogeneous and a martingale?
The original motivation for us to study this problem comes from a 
calibration problem in mathematical finance.

Our approach involves the 
speed measure of a 
diffusion and time-changes of Brownian motion, and makes no assumptions 
on the regularity of the target law. Indeed, to allow for target 
distributions with arbitrary support the natural class of processes to 
consider is the class of generalized diffusions (sometimes referred to 
as gap diffusions) discussed below. Our main result is to show that 
within this class of processes there is a time-homogeneous martingale 
with the given distribution at time 1.

In Section~\ref{construction} we introduce the class of generalised diffusions 
as time-changes of a Brownian motion. We also formulate our 
main result, Theorem~\ref{cont}, which states that given a distribution on 
$\R$
with finite mean, there exists a generalised diffusion that is a martingale and that has this
distribution at time 1. In Section~\ref{Sprel} we collect some general results
about time-changes and generalised diffusions.
In Section~\ref{discrete} we study a discrete version of the inverse problem.
For a given distribution with mass only in a finite number of points, we show that 
there exists a time homogeneous martingale Markov chain with the 
given distribution at time 1.
In Section~\ref{general} we consider the case of a general distribution
on the real axis. By approximating the distribution with finitely supported 
distributions, the existence of a solution to the inverse problem is
obtained, thereby  
proving Theorem~\ref{cont}. 
In Section~\ref{finance}, we apply our results to solve 
a calibration problem in mathematical finance. In Section~\ref{martingality}
we characterize the speed measures for which the corresponding generalised 
diffusion is a (local) martingale.
Finally, Section~\ref{openproblems} 
concludes with some open problems.

\section{Construction of generalised diffusions}
\label{construction}
In this section we construct, following \cite{EH} (see also \cite{K} and \cite{KW}), 
time-homogeneous generalised diffusion processes as time-changes 
of Brownian motion. The time-change is specified in terms of the so-called
\emph{speed measure}. 

Let $\nu$ be a nonnegative Borel measure on the real line;
$\nu$ may be finite or infinite.
Let $B_t$ be a Brownian
motion  
starting at $\xo$, and let $L_u^x$ be its local time at the point $x$ up to
time $u$. 
Recall that (a.s.) $L_u^x$ is continuous in
$(u,x)\in\ooo\times\R$ and increasing in $u$. 
Define the increasing process
\begin{equation}
\label{gamma}
\Gamma_u:=\int_\R L_u^x \nu(dx),
\end{equation}
noting that  $\gG_u\in\ooox$,
and let 
\begin{equation}\label{at}
A_t:=\inf\{u: \Gamma_u > t\}  
\end{equation}
be its right-continuous inverse.
The process
\[X_t:=B_{A_t}\]
will be called a generalised diffusion with speed measure $\nu$.

\begin{example}\label{Ediffusion}
If $\nu(dx)=\frac{dx}{\sigma^2(x)}$ for some continuous
non-vanishing 
function $\sigma$, then $X_t$ is a weak solution of
\[dX_t=\sigma(X_t)\,dW_t.\]
In this case, $X_t$ is a diffusion.
This example is the motivation for calling the measure $\nu$ `speed measure',
but note that $\nu$ rather measures the inverse of the speed.
\end{example}

\begin{remarks}\label{RRdef} 
  \begin{romenumerate}
  \item 
Almost surely, $L_u^x\upto \infty$ for every $x$ as \utoo; 
hence
if $\nu$ is non-zero, then $\gG_u\to\infty$ a.s.\ as \utoo, 
and thus $A_t<\infty$ and $X_t$ is well-defined for every $t\in\ooo$ a.s.
However, we have to exclude the exceptional case $\nu=0$, when $\gG_u=0$ for
every 
$u$ and $A_t=\infty$ for every $t\ge0$, so $X_t$ is not defined.
(For technical reasons, we allow $\nu=0$ when discussing $A_t$, but we
always assume $\nu\neq0$ when considering $X_t$, sometimes without explicitly
saying so.)

\item \label{RRgGcont}
$\gG_u$ is left-continuous (by monotone convergence) and
continuous at every $u$ such that $\gG_{u+}<\infty$ (by dominated
convergence); hence $u\mapsto\gG_u$ is continuous everywhere except that
there might exist a single (random) finite $u_0$ where $\gG_u$ jumps to
$+\infty$: $\gG_{u_0}<\infty$ but $\gG_u=\infty$ for all $u>u_0$.
(For example, this happens if $\nu$ is an infinite point mass, see
\refE{Epoint} below.)

\item \label{RR<} 
By \eqref{at} and the left-continuity of $\gG_u$, for all $t,u\ge0$,
\begin{equation}\label{rr<}
  A_t<u \iff \gG_u>t.
\end{equation}
Equivalently, 
$  u\le A_t \iff t\ge\gG_u$.
It follows that $\gG_{A_t}\le t \le \gG_{A_t+}$ for all $t$, and thus, 
by \ref{RRgGcont},  
$\gG_{A_t}=t$ for all $t$ such that $\gG_{A_t+}<\infty$; see further
\refL{Lgat}.

\item 
$\gG_u$ is finite for all $u$ a.s.\
if and only if $\nu$ is a locally finite measure
(also called a \emph{Radon measure}), 
i.e., $\nu(K)<\infty$ for every compact $K$. 
In this case, $\gG_u$ is continuous by \ref{RRgGcont}, and  $\gG_u$ is a
\emph{continuous additive functional (CAF)} of $B$;
conversely, every CAF of Brownian motion is of this type, see
\cite[Theorem 22.25]{Kallenberg} and \cite[Chapter X]{RY}.

\item \label{RRdisc}
Although $B_0=\xo$, $A_0$ may be strictly positive, and in general
$X_0\neq\xo$, see \refL{L0} below for a precise result. 
In \cite{EH}, $X_0$ is defined to be $\xo$, but this has the
disadvantage of making $X_t$ possibly not right-continuous at $t=0$.
We follow here instead
the standard practise of considering right-continuous processes.
We define 
$A_{0-}=0$ and
$X_{0-}=B_{A_{0-}}=\xo$, and thus allow the possibility that $X_{0-}\neq X_0$.

\item 
We let $(\cF_t)_{t\ge0}$ denote the standard completed Brownian filtration.
Then each $A_t$ is a $(\cF_t)$-stopping time, and $X_t$ is adapted
to the filtration $(\cG_t)=(\cF_{A_t})$, ${t\ge0-}$.
(In particular, $\cG_{0-}=\cF_0$ is trivial.)

In the sequel, we let ``stopping time'' mean $(\cF_t)$-stopping time unless
we say otherwise.

\item 
Even though the process $X_t$ is constructed as a time change of Brownian
motion, it is in general not necessarily a martingale or even a local
martingale; see \refS{martingality} for a detailed discussion. 
However, we are mainly interested in cases in which
$X_t$ is a local martingale, and preferably an integrable martingale.  
Recall the convention just made that $X_{0-}=\xo$ while $X_0$ may be
different. We say that $(X_t)_{t\ge0-}$ is a (local) martingale if 
$(X_t)_{t\ge0}$ is a (local) martingale 
(for the filtration $(\cG_t)$) 
and, further, $\E X_0=X_{0-}$.
(This is equivalent to the standard definition interpreted for the index set
$\set{0-}\cup[0,\infty)$.)  
\end{romenumerate}
\end{remarks}

Our main result is that any given distribution with finite mean 
can be obtained as the distribution of $X_1$ for some such
generalised diffusion with a suitable choice of speed measure $\nu$.

\begin{theorem}
\label{cont}
Let $\mu$ be a distribution on the real axis with finite mean $\bmu=\int_\R
x\mu(dx)$. 
Then there exists a 
generalised diffusion $X$ such that $X_{0-}=\overline \mu$,
$(X_t)_{0-\le t\le 1}$ is a martingale,
and the distribution of $X_1$ is $\mu$.
Furthermore: 
\begin{romenumerate}
\item \label{cont-X_0} 
$X_0=\bmu$ if and only if $\bmu\in\supp\mu$.
\item \label{cont-var}
$\E A_1=\Var(\mu):=\intR (x-\bmu)^2\mu(dx)\le\infty$. 
In particular, $\E A_1<\infty$ if and only if $\Var(\mu)<\infty$.
\end{romenumerate}
\end{theorem}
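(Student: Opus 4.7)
The plan is to reduce to the finitely supported case treated in \refS{discrete} and then to pass to the limit by approximation. For $\mu$ supported on finitely many atoms $x_1<\cdots<x_N$ with $\bmu\in[x_1,x_N]$, one seeks a speed measure of the form $\nu=\sum_{i=1}^N m_i\delta_{x_i}$, with $m_1=m_N=\infty$ so that the endpoints are absorbing. Viewed on its intrinsic time scale the resulting $X$ lives on $\set{x_1,\dots,x_N}$ and is a continuous-time birth--death chain; the jump rates out of an interior state $x_i$ depend on $m_i$ and on the gaps $x_{i\pm 1}-x_i$, and the martingale property fixes the ratio of the up- and down-rates. The remaining freedom in the finite masses $m_2,\dots,m_{N-1}$ is used to match the time-$1$ law to $\mu$; this inverse problem is the content of \refS{discrete}.

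For a general $\mu$ with finite mean, I would approximate it by a sequence $\mu^n$ of finitely supported distributions with $\mu^n\to\mu$ weakly and $\int x\,\mu^n(dx)=\bmu$, for instance by letting $\mu^n$ be the law of $\E[Y\mid\mathcal{F}_n]$ where $Y\sim\mu$ and $(\mathcal{F}_n)$ is an increasing sequence of finite $\sigma$-algebras generating the Borel $\sigma$-algebra on $\R$; this choice automatically ensures uniform integrability of the family $(X^n_1)$ with $X^n_1\sim\mu^n$. By the discrete case each $\mu^n$ is realised by a generalised diffusion $X^n$ with some speed measure $\nu^n$. The crucial step is to extract a vaguely convergent subsequence $\nu^n\to\nu$ on $\R$; using the a.s.\ joint continuity of Brownian local time $L_u^x$ in $(u,x)$, vague convergence of $\nu^n$ yields $\gG^n_u\to\gG_u$ at continuity points of $\gG$, hence $A^n_t\to A_t$, and finally $X^n_t\to X_t$ in law at continuity points. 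Uniform integrability then identifies the time-$1$ law of the limit as $\mu$ and promotes the martingale property to the limit on $[0-,1]$.

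For \ref{cont-X_0}, recall that $A_0=\inf\set{u:\gG_u>0}$ equals the hitting time of $\supp\nu$ for $B$ started at $\bmu$. Since the construction ensures $\supp\nu=\supp\mu$ in the nondegenerate case, we have $A_0=0$, and hence $X_0=B_0=\bmu$, precisely when $\bmu\in\supp\mu$; otherwise $B$ must first cross a gap, so $X_0\neq\bmu$, while $\E X_0=\bmu$ by the martingale property from $0-$. For \ref{cont-var}, apply optional stopping to the local martingale $B_u^2-u$ at $A_1\wedge k$ and let $k\to\infty$ by monotone convergence: since $B_{A_1}=X_1\sim\mu$,
\[
\bmu^2+\E A_1 \;=\; \E X_1^2 \;=\; \intR x^2\,\mu(dx),
\]
so $\E A_1=\Var(\mu)$, whether finite or infinite.

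The principal obstacle is the approximation step: showing that $(\nu^n)$ is tight on $\R$ and that the limit speed measure actually produces a process whose time-$1$ law is $\mu$ and which remains a martingale. Tightness requires controlling $\nu^n([-R,R])$, which can be achieved via the occupation time formula combined with $\E A^n_1=\Var(\mu^n)\to\Var(\mu)$; the passage from vague convergence of speed measures to weak convergence of time-$1$ laws is the technically delicate part. The discrete inverse problem solved in \refS{discrete} is nontrivial but essentially algebraic in nature.
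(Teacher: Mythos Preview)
Your overall architecture matches the paper's: solve the finitely supported case (Section~\ref{discrete}), approximate $\mu$ by such $\mu^n$, extract a limiting speed measure, and pass to the limit via local-time continuity. The places where your sketch diverges from what actually works are worth flagging.

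\textbf{The tightness argument is the real gap.} You propose to control $\nu^n([-R,R])$ through $\E A_1^n=\Var(\mu^n)$. This fails outright when $\Var(\mu)=\infty$, which the theorem explicitly allows, and even when $\Var(\mu)<\infty$ the link between $\E A_1^n$ and $\nu^n([-R,R])$ is not direct (the occupation-time identity involves $\E L^x_{A_1^n}$, which itself depends on $\nu^n$). The paper instead proves a bound of a different flavour (\refL{LC}): if $\P(X_1^n\ge a)\ge\delta$ then $\nu^n[\xo,b]\le C(a,b,\delta)$ for any $b<a$. This uses only the first-moment information encoded in $\mu$, and together with the symmetric statement gives local boundedness of $\nu^n$ on every compact subinterval of $(a_-,a_+):=(\inf\supp\mu,\sup\supp\mu)$. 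A companion lower bound (\refL{Llower}) is also needed to rule out the degenerate limit $\nu=0$.

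\textbf{The infinite endpoint masses require separate handling.} The discrete $\nu^n$ carry infinite atoms at $\inf\supp\mu^n$ and $\sup\supp\mu^n$, so they are not Radon and ``vague convergence on $\R$'' is not the right framework. The paper extracts a limit $\tilde\nu$ only on the open interval $(a_-,a_+)$, then \emph{manually} appends infinite point masses at $a_\pm$ (when finite) to obtain $\nu$, and verifies the hypotheses of \refL{Lconv}(ii) to push $A_1^n\to A_1$ through. Your sketch elides this.

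Two smaller points. For \ref{cont-X_0} you assert $\supp\nu=\supp\mu$; the paper does not prove this and instead argues directly that $\bmu\in\supp\nu\iff\bmu\in\supp\mu$ via \refL{L0} and a short probabilistic argument. For \ref{cont-var}, your optional-stopping computation gives $\E(A_1\wedge k)=\E B_{A_1\wedge k}^2-\bmu^2$; to conclude $\E A_1\le\Var(\mu)$ you need $B_{A_1\wedge k}=\E(X_1\mid\cF_k)$, i.e.\ the uniform integrability of $(B_{A_1\wedge u})_u$ already established in the martingale step, after which conditional Jensen and monotone convergence finish it. The paper does essentially this via \refL{LWald}, having first obtained $\E A_1<\infty$ by Fatou from $\E A_1^n=\Var(\mu^n)\to\Var(\mu)$.
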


It follows from \refT{Tmartingale}
that, actually,
$(X_t)_{0-\le t<\infty}$ 
is a martingale.

\begin{remark} 
\label{rem:nonexample} 
Theorem~\ref{cont} guarantees the existence of a 
time-homogeneous (generalised) diffusion which is a martingale and has a 
certain distribution at time 1. Note that the problem is much easier if 
some of these requirements are dropped. For example, for a given 
distribution $\mu$, one can find a non-decreasing function $f:\R\to\R$ 
such that $f(B_1)$ has law $\mu$ (in particular, $f = 
F_{\mu}^{-1} \circ \Phi$, where $F_\mu$ is the cumulative distribution 
function associated with $\mu$ and $\Phi$ is the normal CDF) and then 
the process $X_t=f(B_t)$ is time-homogeneous with distribution $\mu$ at 
time 1. Further, if $\mu$ has a density which is strictly 
positive and differentiable, then $X$ is a time-homogeneous diffusion 
with $dX_t = a(X_t)\,dB_t + b(X_t) dt$ where $a := f'\circ f^{-1}$ 
and $2b:= f'' \circ f^{-1}$. Typically, however, it is not a martingale.

Similarly, with $f$ as above, the process $M_t=\E (f(B_1)\mid \cF_t)$ is a 
martingale with distribution $\mu$ at time 1, but typically it is not 
time-homogeneous. Again, in the regular case, we have 
$M_t=g(B_t,t)$ for some function $g$ solving the heat equation, 
and $dM_t = g'(g^{-1}(M_t,t),t)\,dB_t$ is a time-inhomogeneous martingale
diffusion, where  
$g^{-1}(\cdot,t)$ is the space-inverse of 
$g(\cdot, t)$, and $g'(\cdot,t)$ denotes the spatial derivative.
\end{remark}

\begin{remark}
In \cite{JT}, a similar inverse problem is studied. Instead of allowing for 
generalised diffusions, the authors consider the case of diffusions 
with regular diffusion coefficients obtaining an approximate solution 
to the inverse problem using variational techniques. In \cite{CHO} the authors 
solve a related inverse problem in which the goal is to construct a 
generalised diffusion $Y$ such that $Y$ is a martingale and such that 
the distribution of $Y_{\tau_E}$ is $\mu$, where $\tau_E$ is an independent 
random exponential time. Stopping at an independent exponential time is more 
tractable than stopping at a fixed time.
\end{remark}

\begin{remark}
One interpretation of our results is that we construct a stopping time 
$\tau \equiv A_1$ such that $B_{\tau} \equiv B_{A_1} \equiv X_1$ has law 
$\mu$.
The general problem of finding a stopping time $T$ 
such that $B_T$ has a
given distribution is known as the \emph{Skorokhod stopping problem}, see
e.g.\ \cite[Section 5.3]{MPeres}, where other constructions of such
stopping times are given. 
\end{remark}

\begin{example}
As a very simple case, if $\mu$ is the normal distribution $N(\xo,\gss)$,
we may take $d\nu=\gs^{-2} dx$, a constant multiple of the Lebesgue measure.
Then $\gG_u=\gs^{-2} u$ and $A_t=\gss t$ (both non-random), so
$X_t=B_{\gss t}$, cf.\ \refE{Ediffusion}.
In general, however, it seems difficult to find $\nu$ explicitly.
\end{example}

\section{Preliminaries}\label{Sprel}

Recall that the \emph{support} of a measure $\nu$ on $\R$ is 
\begin{equation*}
\supp\nu:=\R\setminus\bigcup\bigset{U\subseteq\R:U\text{ is open and }\nu(U)=0}.
\end{equation*}
In other words, $x\in\supp\nu$
if and only if every neighbourhood of $x$
has positive measure.

Similarly, we define the \emph{infinity set} 
$\suppoo\nu$ by
\begin{equation*}
\suppoo\nu
:=\R\setminus\bigcup\bigset{U\subseteq\R:U\text{ is open and }\nu(U)<\infty}.
\end{equation*}
In other words, $x\in\suppoo\nu$ if and only if every \nbh{} of $x$
has infinite measure.
Thus, $\suppoo\nu=\emptyset$ if and only if $\nu$ is locally finite. 
By definition, $\supp\nu$ and $\suppoo\nu$ are closed subsets of $\R$.

If $S$ is any Borel set in $\R$, we let $H_S:=\inf\set{t\ge0:B_t\in S}$
denote the \emph{hitting time} of $S$ (for the Brownian motion $B_t$).
Note that if $S\neq\emptyset$, then $H_S<\infty$ a.s.
We will only consider cases when $S$ is closed, and then $B_{H_S}\in S$.
For $x\in\R$, we write $H_x$ for $H_{\set x}$. 

\begin{lemma}
  \label{Lsupp}
If $\nu\neq0$, then a.s.\ $X_t=B_{A_t}\in\supp \nu$ for all $t\ge0$.
\end{lemma}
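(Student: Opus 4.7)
The plan is to use that $\Gamma$ cannot increase while $B$ stays in the open set $U := \R \setminus \supp\nu$, which is $\nu$-null, and to argue from the definition of $A_t$ as a right-continuous inverse of $\Gamma$ that $A_t$ cannot land strictly inside an excursion of $B$ into $U$. Since $\nu$ is concentrated on $\supp\nu$ (the complement is a countable union of open sets of $\nu$-measure zero, by the Lindel\"of property of $\R$), we may write $\Gamma_u = \int_{\supp\nu} L_u^x\,\nu(dx)$.

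The key step is to fix a component $(\sigma, \tau)$ of the random open set $T := \{u \geq 0 : B_u \in U\}$, which is a disjoint union of at most countably many open intervals. For every $x \in \supp\nu$ we have $B_s \neq x$ throughout $(\sigma, \tau)$, and since the local time at $x$ can only increase on the random set $\{s : B_s = x\}$, the map $s \mapsto L_s^x$ is constant on $(\sigma, \tau)$; the joint continuity of $(u, x) \mapsto L_u^x$ recalled in Section~\ref{construction} then extends this constancy to the closure $[\sigma, \tau]$ (allowing $\tau = \infty$). Integrating against $\nu$ gives that $\Gamma$ itself is constant on $[\sigma, \tau]$, with some value $c$.

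Now suppose that $A_t \in (\sigma, \tau)$ for some $t$. From $A_t = \inf\{u : \Gamma_u > t\}$ we obtain a contradiction in both cases: if $c \leq t$ then no $u \in [\sigma, \tau]$ has $\Gamma_u > t$, so $A_t \geq \tau$; if $c > t$ then $\Gamma_\sigma > t$ already forces $A_t \leq \sigma$. Hence $A_t$ lies outside the interior of $T$, and the continuity of $B$ together with closedness of $\supp\nu$ gives $X_t = B_{A_t} \in \supp\nu$ also when $A_t$ is a boundary point of $T$. I expect the main subtlety to be the passage from constancy of $L^x$ on the open interval to its closure --- it is what rules out $A_t$ sitting at an excursion endpoint where $B$ has just returned to $\supp\nu$ --- and it rests on the joint continuity of local time in $(u,x)$. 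The entire argument proceeds on a single full-probability event, yielding the conclusion uniformly in $t$.
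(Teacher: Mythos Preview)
Your argument is correct and rests on the same idea as the paper: $\Gamma$ is flat on any time interval where $B$ stays off $\supp\nu$, so no such time can be a value of $A$. The paper carries this out pointwise --- for each $s$ with $B_s\notin\supp\nu$ it finds $\eps>0$ with $\Gamma$ constant on $[s-\eps,s+\eps]$ (or $[0,\eps]$ when $s=0$) and concludes $A_t\neq s$ directly --- which is your excursion-interval argument localized and makes your separate treatment of boundary points unnecessary: once you know $A_t\notin T$, the conclusion $B_{A_t}\in\supp\nu$ is immediate from the definition of $T$.
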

\begin{proof}
  If $s>0$ and $B_s\notin\supp\nu$, then there exists $\eps>0$ such that
  $B_u\notin\supp\nu$ 
for $u\in[s-\eps,s+\eps]$, and then $L_u^x$ is constant
  for $u\in[s-\eps,s+\eps]$ for each $x\in\supp\nu$; hence $\gG_u$ is
  constant for $u\in[s-\eps,s+\eps]$, and thus $A_t\neq s$ for all $t\ge 0$.

Similarly, if $s=0$ and $B_s\notin\supp\nu$, then there exists $\eps>0$
such that $\gG_u=0$ for $u\in[0,\eps]$. Therefore $A_t>0=s$ for all $t\geq 0$.
\end{proof}

\begin{lemma}
  \label{LT}
If $T$ is any finite stopping time for $B_t$, then a.s., for all $\eps>0$,
\begin{equation*}
L_{T+\eps}^{B_T} > L_{T}^{B_T}  \ge0.
\end{equation*}
Consequently, there is for every $\eps>0$ 
a.s.\ a (random) $\gd>0$ and an open set $U$
containing $B_T$ such that
$L_{T+\eps}^{x} - L_{T}^{x}  \ge\gd$ for all $x\in U$.
\end{lemma}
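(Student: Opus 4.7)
The plan is to reduce both claims to the classical fact that Brownian motion accumulates strictly positive local time at its own starting point in every positive time interval, and to invoke this via the strong Markov property at $T$.

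First I would apply the strong Markov property at the finite stopping time $T$: the process $\tilde B_s := B_{T+s}$ is a Brownian motion (independent of $\cF_T$) started at $B_T$, and its local time $\tilde L^x_s$ at level $x$ on $[0,s]$ coincides with $L^x_{T+s} - L^x_T$, since local time is an additive functional of the path. The key classical fact I will use is that for any Brownian motion $W$ starting at $0$, $L^0_\eps(W) > 0$ a.s.\ for every $\eps>0$; this follows, for instance, from L\'evy's theorem, which gives $L^0_s(W) \eqd \max_{u\le s} W_u$. Applied to $\tilde B - B_T$, this yields, for each fixed $\eps>0$, the a.s.\ strict inequality $L^{B_T}_{T+\eps} > L^{B_T}_T$. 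Intersecting the corresponding full-probability events for $\eps = 1/n$ over $n$ and using that $\eps \mapsto L^{B_T}_{T+\eps}$ is non-decreasing upgrades this to an a.s.\ statement valid simultaneously for every $\eps>0$.

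For the ``Consequently'' part, I would fix $\eps>0$ and set $\gd_0 := L^{B_T}_{T+\eps} - L^{B_T}_T$, which is a.s.\ strictly positive by the first part. By the joint continuity of Brownian local time in $(u,x)$ (cf.\ \cite[Chapter VI]{RY}), the map $x\mapsto L^x_{T+\eps} - L^x_T$ is continuous, so there is a.s.\ an open neighbourhood $U$ of $B_T$ on which it exceeds $\gd_0/2$; taking $\gd := \gd_0/2$ then supplies the required $\gd$ and $U$.

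The only real technical point is the local time identity $\tilde L^x_s = L^x_{T+s} - L^x_T$ under the strong Markov shift; once this is in hand, both the positivity of local time at the starting point and the joint continuity of $(u,x)\mapsto L^x_u$ are classical and available in the references already cited in the excerpt.
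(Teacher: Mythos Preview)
Your proof is correct and follows essentially the same approach as the paper's: strong Markov property plus positivity of local time at the starting point for the first claim, and continuity of local time for the second. The paper's proof is just a two-sentence sketch of exactly this argument; your version spells out the additivity of local time under the Markov shift, the countable intersection over $\eps=1/n$, and the explicit choice of $\gd$, but these are elaborations rather than a different route.
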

\begin{proof}
The first claim is
  an immediate consequence of the strong Markov property and the fact that
  $L^x_\eps>0$ a.s.{} for a Brownian motion started at $x$ and any $\eps>0$.

The second claim follows since $L$ is continuous.
\end{proof}

\begin{lemma}
  \label{L0}
$A_0=H_{\supp\nu}$, the hitting time  of the support of $\nu$, a.s. 
In particular, if $\xo\in\supp\nu$, then $A_0=0$ and $X_0=\xo$ a.s.,
but if $\xo\notin\supp\nu$, then $A_0>0$ and 
$X_0\neq\xo$ a.s.
\end{lemma}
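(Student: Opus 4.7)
The plan is to prove the two inequalities $A_0\le H_{\supp\nu}$ and $A_0\ge H_{\supp\nu}$ separately, then to read off the ``in particular'' statements. Throughout I work from the identity $A_0=\inf\set{u:\gG_u>0}$ together with $\gG_u=\intR L_u^x\,\nu(dx)$.

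For $A_0\ge H_{\supp\nu}$, I would fix $u<H_{\supp\nu}$ and argue $\gG_u=0$. On this event the continuous path $(B_s)_{s\le u}$ avoids the closed set $\supp\nu$, so by the intermediate value theorem the whole closed interval $[\min_{s\le u}B_s,\max_{s\le u}B_s]$ lies in a single connected component of $\R\setminus\supp\nu$. Since $\R\setminus\supp\nu$ is open, and, by the definition of support together with second countability of $\R$, has $\nu$-measure zero, this interval contributes nothing to $\gG_u$; because $L_u^x=0$ for $x$ outside the interval, $\gG_u=0$. Hence $A_0\ge u$, and letting $u\upto H_{\supp\nu}$ gives the inequality.

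For $A_0\le H_{\supp\nu}$, I would first observe that $\supp\nu$ is nonempty when $\nu\neq0$ (otherwise $\nu(\R)=\nu(\R\setminus\supp\nu)=0$), so $T:=H_{\supp\nu}$ is an a.s.\ finite stopping time. Applying \refL{LT} at $T$ with any $\eps>0$ produces a.s.\ a random $\gd>0$ and an open set $U\ni B_T$ with $L_{T+\eps}^x-L_T^x\ge\gd$ for $x\in U$. Since $B_T\in\supp\nu$ and $U$ is an open neighbourhood of $B_T$, we have $\nu(U)>0$; therefore $\gG_{T+\eps}\ge\gd\,\nu(U)>0$ and $A_0\le T+\eps$. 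Sending $\eps\downto0$ finishes the bound. The ``in particular'' consequences are then immediate: if $\xo\in\supp\nu$ then $H_{\supp\nu}=0$, so $A_0=0$ and $X_0=B_0=\xo$; if $\xo\notin\supp\nu$ then $\R\setminus\supp\nu$ is an open neighbourhood of $\xo$, giving $H_{\supp\nu}>0$ a.s., while $X_0=B_{A_0}\in\supp\nu$ (as $\supp\nu$ is closed) so $X_0\neq\xo$. The main subtlety is in the lower bound: one needs the strengthening of ``the path avoids $\supp\nu$'' to ``the entire closed interval swept by the path has zero $\nu$-measure'', which is where continuity of $B$ and the measure-zero property of $\R\setminus\supp\nu$ must be combined.
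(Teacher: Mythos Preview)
Your proof is correct and follows essentially the same route as the paper: both arguments show $\gG_u=0$ for $u<H_{\supp\nu}$ and then invoke \refL{LT} at the hitting time to get $\gG_{H+\eps}>0$. The only cosmetic differences are that the paper states the first step without elaboration (your interval argument makes it explicit), and for the final clause the paper cites \refL{Lsupp} whereas you argue directly that $B_{H_{\supp\nu}}\in\supp\nu$ because the set is closed.
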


\begin{proof}
  If $u<H=H_{\supp\nu}$, then $\gG_u=0$.
On the other hand, by \refL{LT},
for any $\eps>0$, $L_{H+\eps}^{B_H}>0$ a.s.\ 
and, moreover,
$L_{H+\eps}^{x}>0$ for $x$ in a \nbh{} $U$ of $B_H$.
Since $B_H\in\supp\nu$, we have $\nu(U)>0$ and thus
$\gG_{H+\eps}>0$. In other words, a.s.\ $\gG_u>0$ for all $u>H$.

The definition \eqref{at} of $A_t$ now shows that $A_0= H$ a.s., and the
result follows, recalling \refL{Lsupp}.
\end{proof}

\begin{lemma}
  \label{L00}
We have $\E X_0=x_0=X_{0-}$ if and only if neither
$\supp\nu\subset(-\infty,\xo)$ nor
$\supp\nu\subset(\xo,\infty)$.
\end{lemma}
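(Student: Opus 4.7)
The plan is to reduce everything to the identity $X_0=B_{H_{\supp\nu}}$ supplied by \refL{L0}, and then to analyze $B_{H_{\supp\nu}}$ according to the position of $\xo$ relative to $\supp\nu$. Write $S:=\supp\nu$.

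If $\xo\in S$, then $\refL{L0}$ gives $X_0=\xo$ a.s., so $\E X_0=\xo=X_{0-}$; and neither $S\subset(-\infty,\xo)$ nor $S\subset(\xo,\infty)$ can hold since both exclude $\xo$. This takes care of one side of the equivalence in this case. If $\xo\notin S$, then since $S$ is closed there is a largest open interval $(a_-,a_+)\ni\xo$ disjoint from $S$, where
\begin{equation*}
a_-:=\sup\bigl(S\cap(-\infty,\xo)\bigr)\in[-\infty,\xo),\qquad
a_+:=\inf\bigl(S\cap(\xo,\infty)\bigr)\in(\xo,\infty].
\end{equation*}
The two exclusion conditions in the statement translate neatly: $S\subset(-\infty,\xo)$ iff $a_+=\infty$, and $S\subset(\xo,\infty)$ iff $a_-=-\infty$.

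Next I would handle the ``good'' case when both $a_\pm$ are finite (equivalently, neither inclusion holds). The closedness of $S$ forces $a_\pm\in S$, and by construction $S\cap(a_-,a_+)=\emptyset$, so by continuity of $B$ starting from $\xo\in(a_-,a_+)$ we have $H_S=H_{\{a_-,a_+\}}$. This hitting time is the exit time of Brownian motion from a bounded interval, so it has finite expectation, and optional sampling applied to the martingale $B$ with the bounded interval gives
\begin{equation*}
\E X_0=\E B_{H_{\{a_-,a_+\}}}=\xo.
\end{equation*}
For the reverse direction, if $S\subset(-\infty,\xo)$ then \refL{Lsupp} forces $X_0\in S\subset(-\infty,\xo)$, so $X_0<\xo$ almost surely; hence in the extended sense $\E X_0\in[-\infty,\xo)$ and in particular $\E X_0\neq\xo$. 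The case $S\subset(\xo,\infty)$ is symmetric.

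I do not expect any serious obstacle here, but three small points deserve care in the write-up: (a) checking that $a_\pm\in S$ when finite (a limit-of-closed-set argument); (b) verifying $S\cap(a_-,a_+)=\emptyset$ so that the exit from $(a_-,a_+)$ is simultaneously the hitting of $S$, which requires that $\xo\notin S$ be combined with the definitions of $a_\pm$; and (c) making sense of $\E X_0$ in the one-sided case, where $\E X_0$ may equal $-\infty$ or $+\infty$ but is unambiguously defined because $X_0-\xo$ has a definite sign, and in either case is $\neq\xo$. These three points are routine but should be made explicit.
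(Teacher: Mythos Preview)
Your proposal is correct and follows essentially the same route as the paper: both use \refL{L0} to identify $X_0=B_{H_{\supp\nu}}$, invoke optional stopping (via boundedness of the stopped Brownian motion) in the two-sided case to get $\E X_0=\xo$, and use \refL{Lsupp} in the one-sided cases to force $X_0<\xo$ or $X_0>\xo$ a.s. The paper is slightly terser, handling the case $\xo\in\supp\nu$ and the two-sided case $\xo\notin\supp\nu$ together rather than splitting them, but the substance is the same.
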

\begin{proof}
  If $\supp\nu$ intersects both  $[\xo,\infty)$ 
and 
$(-\infty,\xo]$,
then 
$B_{u\wedge H_{\supp\nu}}$
is a bounded martingale and thus $\E X_0=\E B_{H_{\supp\nu}}=B_0=\xo$.  

On the other hand, if $\supp\nu\subset(-\infty,\xo)$, then \refL{Lsupp}
implies $X_0<x_0$ a.s., so $\E X_0<\xo$. Similarly, if
$\supp\nu\subset(\xo,\infty)$, then $\E X_0>\xo$.
\end{proof}

We define, for given $\nu$ and $\xo$, 
\begin{align}
\xs_+&:=\inf\set{x\ge x_0:x\in\supp\nu}, \label{xs+}\\
\xs_-&:=\sup\set{x\le x_0:x\in\supp\nu},\label{xs-}\\
\xsoo_+&:=\inf\set{x\ge x_0:x\in\suppoo\nu}, \label{xsoo+}\\
\xsoo_-&:=\sup\set{x\le x_0:x\in\suppoo\nu}. \label{xsoo-}
\end{align}
Note that these may be $\pm\infty$ (when the corresponding sets are empty).
In general,
\begin{equation*}
  -\infty \le\xsoo_-\le \xs_-\le\xo\le\xs_+\le\xsoo_+\le\infty.
\end{equation*}

It follows from Lemmas \ref{L0} and \ref{L00}
that we have the following cases:
\begin{romenumerate}
\item 
If $\xo\in\supp\nu$ (i.e., $\xs_-=\xs_+=\xo$), 
then $X_0=\xo$.
\item 
If $\supp\nu\subset (\xo,\infty)$ 
(i.e., $\xs_-=-\infty$ and $\xo<\xs_+<\infty$), 
then $X_0=\xs_+$.
\item 
If $\supp\nu\subset (-\infty,\xo)$  
(i.e., $\xs_+=+\infty$ and $-\infty<\xs_-<\xo$),
then $X_0=\xs_-$.
\item 
Otherwise (i.e., if $-\infty<\xs_-<\xo$ and $\xo<\xs_+<\infty$), 
then $X_0\in\set{\xs_-,\xs_+}$, with the unique
distribution satisfying $\E X_0=\xo$.
\end{romenumerate}

\begin{lemma}
  \label{LH}
Let $H=H_{\suppoo\nu}$ be the hitting time for $B_t$ of the infinity set of
$\nu$. Then a.s.\ $\gG_u=\infty$ for all $u>H$, and thus $A_t\le H$ for all
$t\ge0$. 

On the other hand, $\gG_u<\infty$ a.s.\ for all $u<H$.
\end{lemma}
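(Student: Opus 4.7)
The plan is to prove the two assertions separately: the first by invoking \refL{LT} at the stopping time $H$, and the second by a compactness argument.

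For the first assertion, I will work on the event $\{H<\infty\}$ (the statement is vacuous otherwise). Since $\suppoo\nu$ is closed and $B$ is continuous, $B_H\in\suppoo\nu$. Fixing $\eps>0$ and applying \refL{LT} at the stopping time $H$ yields (a.s.) some $\gd>0$ and an open \nbh{} $U$ of $B_H$ such that $L^x_{H+\eps}\ge\gd$ for every $x\in U$. By the very definition of the infinity set, $\nu(U)=\infty$, hence
\begin{equation*}
\gG_{H+\eps}\ge\int_U L^x_{H+\eps}\,\nu(dx)\ge\gd\,\nu(U)=\infty.
\end{equation*}
As $\eps>0$ was arbitrary, monotonicity of $\gG$ then yields $\gG_u=\infty$ for every $u>H$, and the bound $A_t\le H$ is immediate from \eqref{at}, since the set on the right-hand side contains $(H,\infty)$.

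For the second assertion, I will fix $u>0$ and argue on the event $\{u<H\}$. Then $B_s\notin\suppoo\nu$ for all $s\in[0,u]$, so the intermediate value theorem forces the compact interval $[m_u,M_u]$, where $m_u:=\inf_{s\le u}B_s$ and $M_u:=\sup_{s\le u}B_s$, to be disjoint from the closed set $\suppoo\nu$. Each $x\in[m_u,M_u]$ thus admits an open \nbh{} $U_x$ with $\nu(U_x)<\infty$, and extracting a finite subcover produces an open set $V\supset[m_u,M_u]$ with $\nu(V)<\infty$. Since $L^x_u$ is supported in $[m_u,M_u]\subset V$ and bounded in $x$, we conclude
\begin{equation*}
\gG_u=\int_\R L^x_u\,\nu(dx)\le\Bigl(\sup_x L^x_u\Bigr)\nu(V)<\infty.
\end{equation*}
A routine exhaustion through rational values of $u$ upgrades this to the simultaneous statement that a.s.\ $\gG_u<\infty$ for every $u<H$.

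The only delicate point is applying \refL{LT} at the random time $H$ and organising the null sets correctly; both parts are otherwise quite direct appeals to continuity of the local time and compactness, so I do not anticipate any serious obstacle.
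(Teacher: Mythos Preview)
Your proof is correct and follows essentially the same approach as the paper: both parts invoke \refL{LT} at $H$ for the first assertion and use compactness of the Brownian range $[m_u,M_u]$ for the second. You are somewhat more explicit than the paper in spelling out the finite-subcover step (the paper simply asserts $\nu(K)<\infty$ for the compact interval $K$ disjoint from $\suppoo\nu$) and in noting the rational exhaustion to handle null sets simultaneously, but the substance is identical.
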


\begin{proof}
  If $H<\infty$, then $B_H\in\suppoo\nu$.
For any $\eps>0$, 
 by \refL{LT},
$L_{H+\eps}^{x}\ge\gd>0$ for $x$ in a \nbh{} $U$ of $B_H$  and
some $\gd>0$. 
Since $B_H\in\suppoo\nu$, we have $\nu(U)=\infty$ and thus
$\gG_{H+\eps}\ge \int_U L_{H+\eps}^{x}\nu(dx)=\infty$.
The definition \eqref{at} of $A_t$ now shows that $A_t\le H$ a.s.

If $u<H$, then $K:=\set{B_s:0\le s\le u}$ is a compact interval disjoint from
$\suppoo\nu$, and thus $\nu(K)<\infty$. Since $x\mapsto L_u^x$ is continuous
and vanishes outside $K$, we have $\gG_u=\int_K L_u^x\nu(dx)<\infty$.
\end{proof}

\begin{example}\label{Epoint}
Let $\nu$ be an infinite point mass at $x_1\in\R$. Then
$\supp\nu=\suppoo\nu=\set{x_1}$. If $H_{x_1}$ is the hitting time of $x_1$,
then $\gG_u=0$ for $u< H_{x_1}$,  cf.\ \refL{L0} and its proof,
but a.s.\ $\gG_u=\infty$ for $u>H_{x_1}$ by \refL{LH}.
Hence, a.s., $A_t=H_{x_1}$ and $X_t=x_1$ for every $t\ge0$.

In particular, if $x_1=x_0$, then $H_{x_1}=0$ a.s., and thus $A_t=0$
a.s.\ for all $t\ge0$.

More generally, if $\nu$ is a measure such that
$\nu(S)=0$ or   $\nu(S)=\infty$ for every Borel set $S$, then 
$\suppoo\nu=\supp\nu$, and Lemmas \ref{LH} and \ref{L0}
imply that a.s.\ $A_t=H_{\supp\nu}$ for all $t\ge0$, so $X_t=X_0$ 
is the first point of $\supp\nu$ hit by $B_u$.
\end{example}

\begin{lemma}\label{LB} 
Suppose that $\suppoo\nu\cap(-\infty,\xo]$ and $\suppoo\nu\cap[\xo,\infty)$
are both non-empty. Then  $B_{A_t\wedge u}\in[\xsoo_-,\xsoo_+]$ for all
$t,u\ge0$, and thus $(B_{A_t\wedge u})_{u\ge 0}$ is a bounded martingale for
each fixed $t$.  
\end{lemma}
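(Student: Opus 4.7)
The plan is to confine the Brownian path to $[\xsoo_-,\xsoo_+]$ on the interval $[0,H_{\suppoo\nu}]$ by a pathwise continuity argument, then use \refL{LH} to transfer this bound to the time-changed process, and finally invoke optional sampling for the martingale claim.

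First I would note that $\suppoo\nu$ is closed (by definition, as the complement of an open union), so under the hypothesis both $\xsoo_-$ and $\xsoo_+$ are \emph{finite} numbers that lie in $\suppoo\nu$, with $\xsoo_-\le\xo\le\xsoo_+$. Moreover, the definitions \eqref{xsoo+}--\eqref{xsoo-} say precisely that the open interval $(\xsoo_-,\xsoo_+)$ contains no point of $\suppoo\nu$. Since $B$ is continuous with $B_0=\xo\in[\xsoo_-,\xsoo_+]$, any exit of $B$ from this interval must be preceded by $B$ hitting one of the endpoints $\xsoo_\pm\in\suppoo\nu$; hence $B_s\in[\xsoo_-,\xsoo_+]$ for all $s\le H_{\suppoo\nu}$. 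Combining this with \refL{LH}, which gives $A_t\wedge u\le H_{\suppoo\nu}$, we conclude $B_{A_t\wedge u}\in[\xsoo_-,\xsoo_+]$ for every $t,u\ge0$, establishing the boundedness assertion.

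For the martingale property, recall from \refRR{RRdef} that each $A_t$ is an $(\cF_u)$-stopping time. Optional sampling of the Brownian martingale at $A_t$ then shows that $(B_{A_t\wedge u})_{u\ge0}$ is a local martingale for $(\cF_u)$, and being bounded by the previous paragraph it is in fact a true martingale. The argument presents no serious obstacle; the only point requiring a little care is the observation that $\suppoo\nu$ is closed, so that the extrema $\xsoo_\pm$ are actually attained in $\suppoo\nu$ --- this is what allows the pathwise continuity argument to trap $B$ inside $[\xsoo_-,\xsoo_+]$ up to time $H_{\suppoo\nu}$.
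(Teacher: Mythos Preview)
Your proof is correct and follows essentially the same route as the paper's: identify $H_{\suppoo\nu}$ with the hitting time of $\{\xsoo_-,\xsoo_+\}$, trap $B$ in $[\xsoo_-,\xsoo_+]$ up to that time, and use \refL{LH} plus the fact that $A_t$ is a stopping time. The only difference is cosmetic---you spell out the closedness of $\suppoo\nu$ and the pathwise exit argument, and you pass through local martingales before upgrading via boundedness, whereas the paper simply asserts the martingale property directly (which is immediate since $A_t\wedge u$ is a bounded stopping time).
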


\begin{proof}
  Recall $\xsoo_\pm$ from \eqref{xsoo+}--\eqref{xsoo-}.
By assumption,
  $-\infty<\xsoo_-\le\xo\le\xsoo_+<\infty$. 
Let $H=H_{\suppoo\nu}=H_{\set{\xsoo_-,\xsoo_+}}$. 
Then $\xsoo_-\le B_u\le \xsoo_+$ for
all $u\le H$; thus \refL{LH} implies that 
$\xsoo_-\le B_{u\wedge A_t}\le \xsoo_+$ for
any $u\ge 0$ and $t\ge0$.  
Finally, $(B_{A_t\wedge u})_{u\ge 0}$ is a martingale since $A_t$ is a
stopping time.
\end{proof}

\begin{lemma}
  \label{LWald} 
If $\E A_t<\infty$, then 
$\E X_t=\xo$ and $\Var X_t=\E (X_t-\xo)^2=\E A_t<\infty$.
\end{lemma}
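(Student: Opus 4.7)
The plan is to recognise this as the classical pair of Wald identities for Brownian motion applied at the stopping time $A_t$. Since $A_t$ is an $(\cF_u)$-stopping time (Remarks~\ref{RRdef}) and $X_t=B_{A_t}$, if we can justify optional stopping for the two martingales $M_u:=B_u-\xo$ and $N_u:=(B_u-\xo)^2-u$ at the (possibly unbounded, but a.s.\ finite) time $A_t$, we are done: the first gives $\E X_t=\xo$, and the second gives $\E(X_t-\xo)^2=\E A_t$, which together yield $\Var X_t=\E A_t$.

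The standard way to avoid unbounded-time issues is truncation. First I would apply optional stopping to the bounded stopping times $A_t\wedge n$ for integer $n$, obtaining
\begin{equation*}
\E\bigsqpar{B_{A_t\wedge n}}=\xo,\qquad
\E\bigsqpar{(B_{A_t\wedge n}-\xo)^2}=\E\bigsqpar{A_t\wedge n}.
\end{equation*}
The right-hand side of the second identity is bounded by $\E A_t<\infty$ uniformly in $n$, so the family $\set{B_{A_t\wedge n}-\xo}_{n\ge1}$ is bounded in $L^2$, hence uniformly integrable. Since $A_t<\infty$ a.s.\ (by the assumption $\E A_t<\infty$) and Brownian paths are continuous, $B_{A_t\wedge n}\to B_{A_t}=X_t$ a.s.\ as $n\to\infty$, and uniform integrability upgrades this to $L^1$-convergence, giving $\E X_t=\xo$. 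For the variance, monotone convergence gives $\E(A_t\wedge n)\upto\E A_t$, and Fatou's lemma combined with the uniform $L^2$-bound yields both
\begin{equation*}
\E\bigsqpar{(X_t-\xo)^2}\le\liminf_{n}\E\bigsqpar{(B_{A_t\wedge n}-\xo)^2}=\E A_t
\end{equation*}
and the matching reverse inequality (for instance, via $L^2$-convergence along a subsequence using $L^2$-boundedness and a.s.\ convergence, or directly by applying Fatou to $A_t\wedge n-(B_{A_t\wedge n}-\xo)^2$, which is dominated by $A_t$ in the appropriate sense). Hence $\E(X_t-\xo)^2=\E A_t$, and combined with $\E X_t=\xo$ this is $\Var X_t=\E A_t$.

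The only genuine issue is the passage to the limit, and this is routine once the $L^2$-bound from Wald's second identity (on the truncated stopping times) is in hand; no property of the speed measure $\nu$ beyond what is encoded in the hypothesis $\E A_t<\infty$ is needed.
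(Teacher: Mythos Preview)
Your proposal is correct and matches the paper's approach exactly: the paper's proof is the single sentence ``This is an instance of Wald's lemmas'' with a reference, and your argument is precisely the standard truncation proof of those identities. One small tightening: your two suggested routes to the reverse inequality for the second moment are both a little loose (neither $L^2$-boundedness plus a.s.\ convergence alone, nor Fatou applied to $A_t\wedge n-(B_{A_t\wedge n}-\xo)^2$, quite closes it), but the clean way is to note that $(B_{A_t\wedge u}-\xo)_{u\ge0}$ is an $L^2$-bounded martingale, hence converges in $L^2$ to $X_t-\xo$, which immediately gives $\E(X_t-\xo)^2=\lim_u\E(A_t\wedge u)=\E A_t$.
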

\begin{proof}
  This is an instance of Wald's lemmas, see e.g.\ 
\cite[Theorems 2.44 and 2.48]{MPeres}. 
\end{proof}

\begin{lemma}
  \label{Lmart} 
If $\E A_{t_0}<\infty$ for some $t_0<\infty$,
then $(X_t)_{0-\le t\le t_0}$ is a square integrable martingale.
\end{lemma}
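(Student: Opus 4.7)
The plan is that this should follow from classical optional sampling applied to the underlying Brownian motion, using that $A_t$ is an $(\cF_u)$-stopping time for each $t$.

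First I would establish square integrability. Since $A_t$ is increasing in $t$, we have $A_t\le A_{t_0}$ for every $t\in[0,t_0]$, and so $\E A_t\le \E A_{t_0}<\infty$. Lemma~\ref{LWald} then gives $\E X_t=\xo$ and $\E(X_t-\xo)^2=\E A_t<\infty$, so $X_t\in L^2$ for every $t\in[0,t_0]$. The same argument at $t=0$ yields $\E X_0=\xo=X_{0-}$, which is the required equality linking the index $0-$ to the rest of the process.

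Next I would prove the martingale identity $\E[X_t\mid\cG_s]=X_s$ for $0\le s\le t\le t_0$. Here $\cG_s=\cF_{A_s}$, and $A_s\le A_t$ are both stopping times in the Brownian filtration with $\E A_t<\infty$. Since Brownian motion is a martingale, the standard optional sampling theorem (the $L^2$ version, which applies because $\E A_t<\infty$ ensures $(B_{u\wedge A_t})_{u\ge0}$ is uniformly integrable — in fact bounded in $L^2$ with $\E B_{u\wedge A_t}^2=\xo^2+\E(u\wedge A_t)\le \xo^2+\E A_t$) gives
\begin{equation*}
\E[B_{A_t}\mid\cF_{A_s}]=B_{A_s},
\end{equation*}
that is, $\E[X_t\mid\cG_s]=X_s$. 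Combined with the previous paragraph this shows $(X_t)_{0-\le t\le t_0}$ is a martingale with respect to $(\cG_t)$, and since each $X_t$ lies in $L^2$ the martingale is square integrable.

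The main potential obstacle is the justification of optional sampling at the (possibly unbounded) stopping times $A_s,A_t$; but this is standard for Brownian motion as soon as $\E A_t<\infty$, since one can truncate at $A_t\wedge n$, apply the bounded stopping time version of optional sampling, and pass to the limit using the $L^2$ bound $\E B_{A_t\wedge n}^2=\xo^2+\E(A_t\wedge n)\le\xo^2+\E A_t$ to obtain $L^2$-convergence of $B_{A_t\wedge n}\to B_{A_t}$. No additional structure of $\nu$ is needed beyond what is already encoded in the hypothesis $\E A_{t_0}<\infty$.
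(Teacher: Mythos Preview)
Your proposal is correct and follows essentially the same route as the paper: truncate at $A_t\wedge n$, use the bounded optional sampling theorem to get $\E(B_{A_t\wedge n}\mid\cF_{A_s})=B_{A_s\wedge n}$, use the Wald identity $\E(B_{A_t\wedge n}-\xo)^2=\E(A_t\wedge n)\le\E A_{t_0}$ for uniform integrability, and pass to the limit. One small imprecision: an $L^2$ \emph{bound} alone yields $L^1$-convergence (via uniform integrability), not $L^2$-convergence; but $L^1$-convergence is all you need to pass to the limit in the conditional expectation, and the paper in fact argues $L^1$-convergence at this step.
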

\begin{proof}
Since each $A_t\wedge n$ is a bounded stopping time, 
if $0-\le s<t$, then 
\begin{equation}
  \label{lmart}
\E (B_{A_t\wedge n}\mid \cF_{A_s}) = B_{A_t\wedge n\wedge A_s}
= B_{A_s\wedge n}
\end{equation} 
a.s.\ (see e.g.\ \cite[Theorem 7.29]{Kallenberg}), so 
$B_{A_t\wedge n}$, $t\ge0-$, is a martingale.
Furthermore, 
by Wald's lemma, i.e.,  since $(B_t-\xo)^2-t$ is a martingale,
\begin{equation}\label{waldn}
\E (B_{A_t\wedge n}-\xo)^2 = \E\lrpar{A_t\wedge n}. 
\end{equation}
It follows that
for any fixed $t\le t_0$, 
$\E (B_{A_t\wedge n}-\xo)^2 \le \E A_t<\infty$; hence
the variables
$B_{A_t\wedge n}$, $n\ge1$, are uniformly integrable, and
thus $B_{A_t\wedge n}\to B_{A_t}$ in $L^1$. 
If $0-\le s\le t\le t_0$, we thus obtain, by
letting $n\to\infty$ in \eqref{lmart}, 
$\E (B_{A_t}\mid \cF_{A_s}) =   B_{A_s}$ a.s.

Thus $X_t=B_{A_t}$, $0-\le t\le t_0$, is an integrable martingale; it is
square integrable by \refL{LWald}.
\end{proof}

Note that the converse to \refL{LWald}
does not always hold: we may have $\Var X_t<\infty$
also when $\E A_t=\infty$. For example, this happens in \refE{Epoint} if
$x_1\neq \xo$. We give a simple sufficient condition for $\E A_t<\infty$.

\begin{lemma}
  \label{LA}
Suppose that $\suppoo\nu\cap(-\infty,\xo]$ and $\suppoo\nu\cap[\xo,\infty)$
both are non-empty. Then $(X_t)_{t\ge0-}$,  is a bounded martingale with
$\E X_t=\xo$ and 
$\E A_t=\E (X_t-\xo)^2<\infty$ for every $t\ge0$.
\end{lemma}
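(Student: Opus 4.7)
The plan is to bootstrap from the preceding lemmas; the only real work is to secure $\E A_t<\infty$. Once that is in hand, \refL{Lmart} gives the martingale property, \refL{LWald} gives $\E X_t=\xo$ and $\E(X_t-\xo)^2=\E A_t$, and \refL{LB} gives boundedness.

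The key step is to bound $A_t$ deterministically by a hitting time whose expectation is easy. Under the hypothesis, the quantities $\xsoo_\pm$ defined in \eqref{xsoo+}--\eqref{xsoo-} are finite, and since $\suppoo\nu$ is closed, both $\xsoo_-$ and $\xsoo_+$ belong to $\suppoo\nu$. Let $\tau:=H_{\set{\xsoo_-,\xsoo_+}}$, the exit time of the Brownian motion $B$ (started at $\xo$) from the bounded interval $[\xsoo_-,\xsoo_+]$. A standard optional stopping argument applied to the martingale $(B_u-\xo)^2-u$ yields $\E\tau=(\xo-\xsoo_-)(\xsoo_+-\xo)<\infty$. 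Since $\set{\xsoo_-,\xsoo_+}\subseteq\suppoo\nu$, one has $H_{\suppoo\nu}\le\tau$, and then \refL{LH} gives $A_t\le H_{\suppoo\nu}\le\tau$ for every $t\ge0$, so $\E A_t\le\E\tau<\infty$.

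With this bound, I would invoke \refL{Lmart} (with $t_0$ arbitrarily large) to conclude that $(X_t)_{t\ge 0-}$ is a square integrable martingale, and then \refL{LWald} to obtain $\E X_t=\xo$ and $\E(X_t-\xo)^2=\E A_t$. For uniform boundedness, \refL{LB} shows that each $(B_{A_t\wedge u})_{u\ge0}$ takes values in $[\xsoo_-,\xsoo_+]$; letting $u\to\infty$ gives $X_t=B_{A_t}\in[\xsoo_-,\xsoo_+]$ a.s. There is no substantive obstacle here; the only insight needed is recognizing that, under the two-sided infinity-set hypothesis, $A_t$ is dominated by the exit time of Brownian motion from a bounded interval.
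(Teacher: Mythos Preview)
Your proof is correct. The overall structure matches the paper's: establish $\E A_t<\infty$, then invoke \refL{Lmart} for the martingale property, \refL{LWald} for the mean and variance identities, and \refL{LB} for boundedness.

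The one genuine difference is in how you obtain $\E A_t<\infty$. The paper argues directly from Wald's identity \eqref{waldn}: since \refL{LB} gives $B_{A_t\wedge n}\in[\xsoo_-,\xsoo_+]$, one has $\E(A_t\wedge n)=\E(B_{A_t\wedge n}-\xo)^2\le\max\{(\xo-\xsoo_-)^2,(\xsoo_+-\xo)^2\}$, and monotone convergence finishes. You instead dominate $A_t$ pointwise by the exit time $\tau=H_{\{\xsoo_-,\xsoo_+\}}$ via \refL{LH}, and then quote the classical formula $\E\tau=(\xo-\xsoo_-)(\xsoo_+-\xo)$. Both arguments are short and rest on the same ingredient (the two-sided barrier $[\xsoo_-,\xsoo_+]$); yours gives the slightly sharper uniform bound $\E A_t\le(\xo-\xsoo_-)(\xsoo_+-\xo)$, while the paper's route has the minor advantage of yielding the identity $\E A_t=\E(X_t-\xo)^2$ in the same limiting step rather than via a separate appeal to \refL{LWald}.
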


\begin{proof}
\refL{LB} 
shows that
$\xsoo_-\le X_t=B_{A_t}\le \xsoo_+$, for any $t\ge0$,
so $(X_t)$ is uniformly bounded. 

For each $n$, $A_t\wedge n$ is a bounded stopping time and 
\eqref{waldn} holds.
Letting \ntoo,
we find 
$\E (X_t-\xo)^2=\E A_t$ by dominated
and monotone convergence, and thus
$\E A_t\le\max\set{(\xo-\xsoo_-)^2,(\xsoo_+-\xo)^2}<\infty$.
Finally, \refL{Lmart} shows that $X_t$ is a martingale.
\end{proof}

We have defined $A_t$ in \eqref{at} so that it is right-continuous.
The corresponding left-continuous process is
\begin{equation}  \label{atm}
  A_{t-}:=\inf\set{u\ge0:\gG_u\ge t};
\end{equation}
note that for $t>0$, $\atm=\lim_{s\upto t} A_s$, 
while $A_{0-}=0$ (as defined in Remark~\ref{RRdef}\ref{RRdisc} above), and 
$A_{t-}$ is a stopping time. It is possible that $\atm<A_t$, i.e.\ that
$A_t$  jumps; this corresponds to time intervals where $\gG_u$ is constant,
because $B_u$ moves in the complement of $\supp\nu$, so unless
$\supp\nu=\R$, it will a.s.\ happen for some $t$.
However, the next lemma shows that there is a.s. no jump for a fixed $t>0$.
(Equivalently, for a fixed $t>0$, there is a.s.\ 
at most one $u>0$ such that $\gG_u=t$.)

\begin{lemma}
  \label{LQ}
Let $t$ be fixed with $0<t<\infty$. 
Then a.s. $\atm=A_t$.
\end{lemma}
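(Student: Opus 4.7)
Since $A_{t-}\le A_t$ always, it suffices to prove $A_t\le A_{t-}$, and for this I would show that $\gG_{A_{t-}+\eps}>t$ a.s.\ for each fixed $\eps>0$; then $A_t\le A_{t-}+\eps$ for every $\eps>0$ a.s.\ and, intersecting over rational $\eps$, $A_t\le A_{t-}$ a.s. Note first that $A_{t-}<\infty$ a.s., since $\gG_u\upto\infty$ a.s.\ and $t<\infty$.

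If $\gG_{A_{t-}+\eps}=\infty$ there is nothing to prove, so I would reduce to the case $\gG_{A_{t-}+\eps}<\infty$; by Remark~\ref{RRdef}\ref{RRgGcont}, $\gG$ is then continuous at $A_{t-}$. Since $\gG_u<t$ for $u<A_{t-}$, left-continuity yields $\gG_{A_{t-}}\le t$, and combined with $\gG_{A_{t-}+\eps}\ge t$ (from the definition of $A_{t-}$) and continuity at $A_{t-}$, this forces $\gG_{A_{t-}}=t$.

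The crucial observation is that $B_{A_{t-}}\in\supp\nu$ a.s. For any $\eps'\in(0,A_{t-}]$, the strict inequality $\gG_{A_{t-}-\eps'}<\gG_{A_{t-}}$ shows that the Stieltjes measure $d\gG_u=\int dL_u^x\,\nu(dx)$ charges $[A_{t-}-\eps',A_{t-}]$. Since each $dL^x$ is supported on $\{u:B_u=x\}$, the support of $d\gG$ lies in the closed set $\{u:B_u\in\supp\nu\}$; picking $u_{\eps'}$ in this set and letting $\eps'\downto0$, continuity of $B$ and closedness of $\supp\nu$ give $B_{A_{t-}}\in\supp\nu$.

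Finally I would apply \refL{LT} at the stopping time $T=A_{t-}$: a.s.\ there exist a random open \nbh{} $U$ of $B_{A_{t-}}$ and $\gd>0$ with $L^x_{A_{t-}+\eps}-L^x_{A_{t-}}\ge\gd$ for all $x\in U$. Because $B_{A_{t-}}\in\supp\nu$, $\nu(U)>0$, so
\[
\gG_{A_{t-}+\eps}-\gG_{A_{t-}}\ge\int_U\bigl(L^x_{A_{t-}+\eps}-L^x_{A_{t-}}\bigr)\,\nu(dx)\ge\gd\,\nu(U)>0,
\]
and hence $\gG_{A_{t-}+\eps}>t$, as required. The main subtlety is the middle step: without $B_{A_{t-}}\in\supp\nu$ the \nbh{} produced by \refL{LT} could miss $\supp\nu$, and the strict increase of $\gG$ immediately past $A_{t-}$ — the engine of the proof — would not follow.
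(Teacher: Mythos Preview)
Your proof is correct and follows essentially the same route as the paper: fix $\eps>0$, show $\gG_{A_{t-}+\eps}>t$ by combining $\gG_{A_{t-}}=t$ with the strict increase of $\gG$ past $A_{t-}$ furnished by \refL{LT} and the fact that $B_{A_{t-}}\in\supp\nu$. The only difference is in this last fact: the paper obtains it in one line from \refL{Lsupp} applied at the times $t-1/n$ (so $B_{A_{t-1/n}}\in\supp\nu$, then let $n\to\infty$), whereas you argue directly via the support of the Stieltjes measure $d\gG$; both are valid, but the paper's shortcut is worth noting since \refL{Lsupp} is already available.
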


Note that the result fails for $t=0$, see \refL{L0}.

\begin{proof} 
Let $\eps>0$.
Since $\atm$ is a stopping time,
\refL{LT} shows that a.s.\ there exists 
a \nbh{} $U$ of $B_{A_{t-}}$  and
some $\gd>0$ such that
$L_{\atm+\eps}^{x}\ge L_{\atm}^x+\gd$ for $x\in U$. 
Further, since $A_{t-1/n}\to A_{t-}$ as \ntoo, \refL{Lsupp}
implies that a.s.\ $B_{A_{t-}}\in\supp\nu$ and thus $\nu(U)>0$; hence either
$\gG_\atm=\infty$ or
\begin{equation}
  \label{atme}
\gG_{\atm+\eps}\ge \gG_{\atm}+\gd\nu(U)>\gG_{\atm}.
\end{equation}

If $\gG_{\atm+\eps}<\infty$, then $\gG_u$ is continuous at $u=\atm$, 
see \refRR{RRdef}\ref{RRgGcont}, and it 
follows from \eqref{atm} that $\gG_\atm\ge t$ (and actually $\gG_\atm=t$);
thus \eqref{atme} yields $\gG_{\atm+\eps}> t$. This is trivially true also
when $\gG_{\atm+\eps}=\infty$.

Thus, a.s., $\gG_{\atm+\eps}>t$, which implies that
that $A_t\le\atm+\eps$. Since $\eps>0$ is arbitrary, and $\atm\le A_t$, the
result follows.
\end{proof}

When considering several speed measures $\nu_n$, we use $n$ as a superscript
to denote the corresponding $\gG_u^n$, $A_t^n$ and $X_t^n$; we use always
the same $B_t$.

If $S$ is a topological space (in our case $\R$ or an interval in $\R$),
we let $\cc(S)$ denote the space of continuous functions $S\to\R$ with
compact support, and $\ccp(S)$ the subset of such functions $S\to\ooo$.

\begin{lemma}
\label{Lconv}
Let $\nu, \nu_1,\nu_2,\dots$ be a sequence of measures on $\R$.
Assume either
\begin{romenumerate}
\item 
$\intR\gf\dd\nu_n \to\intR\gf\dd\nu$ as \ntoo{} for every $\gf\in\ccp(\R)$,
\end{romenumerate}
or, more generally,
\begin{romenumerateq}
\item 
there exists an interval $(a,b)$ with $-\infty\le a<\xo<b\le\infty$ such that
$\int\gf\dd\nu_n \to\int\gf\dd\nu$ as \ntoo{} for every $\gf\in\ccp(a,b)$,
and also for every $\gf\in\ccp(\R)$ such that $\gf(a)>0$ or $\gf(b)>0$.
\end{romenumerateq}

Then, for each $t>0$, $A^n_t\to A_t$ a.s., and thus, if
$\nu,\nu_1,\nu_2,\dots$ are non-zero,
$X^n_t\to X_t$ a.s.,
where
$X, X^1,\allowbreak X^2,\dots$ are the corresponding generalised
diffusions  constructed from the same Brownian motion. 
\end{lemma}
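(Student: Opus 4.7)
The plan is to prove the lemma in three steps: first, establish the pointwise convergence $\gG^n_u\to\gG_u$ a.s.\ for each fixed $u>0$; second, invert to obtain $A^n_t\to A_t$; third, compose with the continuous Brownian motion to deduce $X^n_t\to X_t$.

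\emph{Step 1.} Fix $u>0$. For a.e.\ $\omega$, the map $x\mapsto L^x_u(\omega)$ is continuous on $\R$ with compact support contained in $[\min_{s\le u}B_s,\max_{s\le u}B_s]$. Under hypothesis (i), this function lies in $\ccp(\R)$, and the hypothesis applied to it gives $\gG^n_u\to\gG_u$ directly. Under hypothesis (ii) the argument is more delicate, since the support of $L^\cdot_u$ need not lie in $(a,b)$. I would split $L^\cdot_u$ into a part supported in a compact subset of $(a,b)$ and into tails beyond. On the interior, one sandwiches $L^\cdot_u$ between functions in $\ccp(a,b)$, so that piece of $\gG^n_u$ converges by the first clause of the hypothesis. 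On the tail $[b,\infty)$, a nonzero contribution can occur only when $\max_{s\le u}B_s\ge b$, in which case $L^b_u>0$; the tail is then sandwiched between functions in $\ccp(\R)$ positive at $b$, and the second clause yields convergence. The symmetric argument handles the tail $(-\infty,a]$. Summing the three pieces gives $\gG^n_u(\omega)\to\gG_u(\omega)$.

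\emph{Step 2.} Fix $t>0$ and work on the full-measure event provided by \refL{LQ}, on which $\atm=A_t$. Combined with \refRR{RRdef}\ref{RR<}, this forces $\gG_u<t$ for all $0<u<A_t$ and $\gG_u>t$ for all $u>A_t$. For any $u>A_t$, Step 1 at that $u$ gives $\gG^n_u\to\gG_u>t$, so eventually $\gG^n_u>t$, whence $A^n_t<u$; letting $u\downto A_t$ through a countable sequence yields $\limsup_n A^n_t\le A_t$. Conversely, for $0<u<A_t$, Step 1 gives $\gG^n_u<t$ eventually, so $A^n_t\ge u$, and $u\upto A_t$ along a countable sequence yields $\liminf_n A^n_t\ge A_t$. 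Intersecting the countably many null sets produces a single full-measure event on which $A^n_t\to A_t$.

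\emph{Step 3.} Since $B$ has continuous paths a.s., $X^n_t=B_{A^n_t}\to B_{A_t}=X_t$ a.s.; the finiteness of $A^n_t$ and $A_t$ needed to pass the limit inside $B$ comes from the assumption that the measures $\nu,\nu_1,\nu_2,\dots$ are non-zero.

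The principal obstacle is Step 1 under hypothesis (ii): one must combine the two clauses of the hypothesis to produce simultaneous upper and lower approximations of $L^\cdot_u$ whose $\nu_n$-integrals squeeze onto the correct limit, and verify that the error controls transfer uniformly from the limiting $\nu$ to the approximating $\nu_n$. Once that pointwise convergence is in place, Step 2 is essentially a right-continuous-inverse argument, made rigorous precisely by \refL{LQ}, which rules out the pathological case where $\gG_u$ has a flat stretch at the level $t$.
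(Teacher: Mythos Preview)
Your three-step outline matches the paper's proof exactly, and Steps 2 and 3 are carried out the same way (in particular, the paper also invokes \refL{LQ} to rule out a flat stretch of $\gG$ at level $t$, and then passes to the inverse using $\gG_u<t$ for $u<A_t$ and $\gG_u>t$ for $u>A_t$).

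The only difference is your handling of Step~1 under hypothesis~(ii). You propose a spatial decomposition of $L^\cdot_u$ into an interior piece in $\ccp(a,b)$ and tail pieces positive at $a$ or $b$, with a sandwiching argument. This would work, but the paper avoids it entirely by splitting in \emph{time} rather than space: with $H:=H_{\{a,b\}}$, if $u<H$ then the whole function $x\mapsto L^x_u$ already lies in $\ccp(a,b)$, while if $u>H$ then by \refL{LT} one has $L^{B_H}_u>0$ with $B_H\in\{a,b\}$, so the whole function lies in $\ccp(\R)$ with a positive value at $a$ or $b$. In either case the hypothesis applies directly to $\gf=L^\cdot_u$ with no splitting, sandwiching, or error transfer needed; the only exceptional time is $u=H$, which is then sidestepped in Step~2 by choosing $\eps$ with $A_t\pm\eps\neq H$. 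So your ``principal obstacle'' dissolves: the convergence $\gG^n_u\to\gG_u$ is immediate once you notice $L^\cdot_u$ is itself a test function.
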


\begin{proof}
The local time $L_u^x\in\ccp(\R)$, as a function of $x$, for every $u\ge0$.
In (i) we thus have, for every $u\ge0$,
\begin{equation}
  \label{c2}
\gG_u^n
= \intR L_u^x\nu_n(dx)
\to \intR L_u^x\nu(dx) = \gG_u.
\end{equation}

In (ii), let $H=H_{\set{a,b}}$ be the hitting time of \set{a,b}.
If $u<H$, then the support of $L_u^x$ is contained in $(a,b)$, so
$L_u^x\in\ccp(a,b)$ and $\gG_u^n\to\gG_u$ as in \eqref{c2}.

If $u>H$, then a.s.\ $L_u^{B_H}>0$ by \refL{LT}. 
Since $\gf(x):= L_u^x$ is continuous
and $B_H=a$ or $B_H=b$, the assumption shows that \eqref{c2} holds in this
case too.

Hence, in both (i) and (ii), $\gG_u^n\to\gG_u$ for all $u\ge0$ except
possibly when $u=H$.

Let $s=A_t$. By \eqref{at}, $\gG_u>t$ for $u>s$ and $\gG_u\le t$ for $u<s$.
Further, if $\gG_u=t$ for some $u<s$, then
$\atm\le u<s=A_t$,
which has probability  0 by \refL{LQ}.
Consequently, a.s.\ $\gG_u<t$ if $u<s$.

Assume for simplicity that $s=A_t<\infty$. (The case $A_t=\infty$ is
similar.)
If $\eps>0$ and $s+\eps\neq H$, then thus $\gG^n_{s+\eps}\to\gG_{s+\eps}>t$.
Hence, for sufficiently large $n$, $\gG_{s+\eps}^n>t$ and thus $A_t^n\le
s+\eps$. Since $\eps>0$ is almost arbitrary, it follows that 
$\limsup_\ntoo A_t^n\le s$ a.s.
Similarly, considering $s-\eps\neq H$, it follows that
$\liminf_\ntoo A_t^n\ge s$ a.s.
Consequently, $A_t^n\to A_t$ a.s.\ as \ntoo.

Finally, if $A_t<\infty$, then  $X_t^n=B_{A_t^n}\to B_{A_t}= X_t$ a.s.\
follows, since $B_u$ is continuous. 
\end{proof}

\begin{lemma}\label{LC}
Let $a>b>\xo$ and $\gd>0$.
Then there exists a constant $C=C(a,b,\xo,\gd)$ such that 
if $\P(X_1\ge a)\ge\gd$, then $\nu[\xo,b]\le C$.  
\end{lemma}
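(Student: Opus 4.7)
\smallskip
\noindent The plan is to argue by contradiction via the time-change clock: if $\nu[\xo,b]$ is too large, then $\gG_{H_a}$ (where $H_a$ denotes the hitting time of level $a$ by $B$) exceeds $1$ with high probability, forcing $A_1<H_a$, so that $X_1=B_{A_1}$ cannot reach $a$ and $\P(X_1\ge a)<\gd$.

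\smallskip
\noindent First I would record the reduction. Since $B_0=\xo<a$ and $B$ has continuous paths, the event $\{X_1\ge a\}=\{B_{A_1}\ge a\}$ forces $H_a\le A_1$, and by \eqref{rr<} this is the same as $\gG_{H_a}\le 1$. Therefore $\P(\gG_{H_a}\le 1)\ge\P(X_1\ge a)\ge\gd$. Restricting the integral in \eqref{gamma} to $[\xo,b]$ gives
\[
\gG_{H_a}\ge\int_{[\xo,b]}L^x_{H_a}\,\nu(dx)\ge M\,\nu[\xo,b],\qquad M:=\inf_{x\in[\xo,b]}L^x_{H_a}.
\]

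\smallskip
\noindent The heart of the argument is to show $M>0$ a.s., with a distribution that does not depend on $\nu$. For this I would invoke the first Ray--Knight theorem: after translating $B$ by $-\xo$ so that it starts at $0$ and hits $a-\xo$, the process $y\mapsto L^{a-y}_{H_a}$ for $y\in[0,a-\xo]$ is a squared Bessel process of dimension $2$ started at $0$, which is a.s.\ strictly positive on $(0,a-\xo]$. Combined with the joint continuity of the local time field, this yields $M>0$ a.s., and the law of $M$ depends only on $a,b,\xo$ (and on the driving Brownian motion).

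\smallskip
\noindent To conclude, I would pick $c_0=c_0(\gd,a,b,\xo)>0$ with $\P(M<c_0)\le\gd/2$ and set $C:=1/c_0$. If $\nu[\xo,b]>C$, then on $\{M\ge c_0\}$ the display above gives $\gG_{H_a}\ge c_0\,\nu[\xo,b]>1$; hence $\{\gG_{H_a}\le 1\}\subseteq\{M<c_0\}$ and $\P(\gG_{H_a}\le 1)\le\gd/2<\gd$, contradicting the reduction. Thus $\nu[\xo,b]\le C$. The main obstacle is the \emph{uniform} positivity $M>0$ on the compact interval $[\xo,b]$: pointwise positivity $L^x_{H_a}>0$ at each fixed $x$ is immediate from \refL{LT}, but ruling out a vanishing infimum over uncountably many levels is what really requires Ray--Knight (or an equivalent device).
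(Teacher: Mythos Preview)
Your proof is correct and follows essentially the same route as the paper's: reduce $\{X_1\ge a\}$ to $\{\gG_{H_a}\le1\}$, bound $\gG_{H_a}\ge M\,\nu[\xo,b]$ with $M=\inf_{[\xo,b]}L^x_{H_a}$, invoke Ray--Knight to get $M>0$ a.s., and choose a threshold $c_0$ for $M$ to force the desired bound on $\nu[\xo,b]$. The only cosmetic differences are that the paper first replaces $a$ by $(a+b)/2$ to work with the strict event $\{X_1>a\}$ (which your direct use of \eqref{rr<} makes unnecessary) and phrases the last step as a direct argument rather than a contradiction.
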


\begin{proof}
Assume for convenience $\xo=0$.
By replacing $a$ by $(a+b)/2$, we may also assume that $\P(X_1>a)\ge\gd$.

Let $H=H_a=\inf\set{u:B_u=a}$. By definition, $X_1=B_{A_1}$, so if $X_1>a$, then
$H<A_1$ and thus $\gG_H\le 1$. Consequently, $\P(\gG_H\le 1)\ge\gd$.

The local time $L_H^x$ is a continuous function of $x\in\R$, and it is 
a.s.\ strictly positive on $[0,a)$
by Ray's theorem \cite[Thm 6.38]{MPeres} (a consequence of the Ray--Knight
theorem which gives its distribution). Hence, 
$Y=\inf_{x\in[0,b]} L_H^x>0$ a.s., and thus there is a constant $c>0$ such
that $\P(Y<c)<\gd$. 

Hence, with positive probability $\gG_H\le1$ and $Y\ge c$. However, then
\begin{equation*}
1\ge  \gG_H=\intR L_H^x\nu(dx)
\ge\int_0^b L_H^x\nu(dx)\ge Y\nu[0,b] \ge c\nu[0,b],
\end{equation*}
so $\nu[0,b]\le 1/c$.
\end{proof}

\begin{lemma}\label{Llower}
For every $K>0$ there exists $\kk=\kk(K)>0$ such that  
if $\nu$ is a speed measure such that $\E |X_1|\le K$, 
and further $\suppoo\nu\cap(-\infty,\xo]$ and $\suppoo\nu\cap[\xo,\infty)$
both are non-empty,
then $\nu[\xo-2K,\xo+2K]\ge\kk$. 
\end{lemma}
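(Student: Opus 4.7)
My plan is to argue by contradiction: supposing $\nu(I)<\kk$ for $I:=[\xo-2K,\xo+2K]$ and some $\kk=\kk(K)$ to be chosen, I will derive a contradiction with the hypothesis $\E|X_1|\le K$. Without loss of generality take $\xo=0$. The starting point is a Green-function bound: let $H:=H_{\set{-2K,2K}}$ denote the Brownian exit time of $(-2K,2K)$. Standard local-time theory yields $\E L_H^x\le 2K$ for $x\in I$ and $L_H^x=0$ for $x\notin I$, so by Fubini
\[
\E\gG_H=\int_I\E L_H^x\,\nu(dx)\le 2K\,\nu(I),
\]
whence $\P(\gG_H\ge 1)\le 2K\nu(I)$ by Markov.

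On the other hand, by \refL{LA} the two-sided hypothesis on $\suppoo\nu$ makes $X$ a bounded martingale, so $|X|$ is a bounded submartingale and Doob's maximal inequality gives $\P\bigpar{\sup_{0\le t\le 1}|X_t|\ge 2K}\le\E|X_1|/(2K)\le 1/2$. If I can establish the pathwise inclusion
\[
\bigset{\gG_H<1}\subseteq\Bigcpar{\sup_{0\le t\le 1}|X_t|\ge 2K}\quad\text{a.s.},
\]
then $\P(\gG_H\ge 1)\ge 1/2$, and combining with the Markov bound yields $\nu(I)\ge 1/(4K)$, which gives the lemma with $\kk(K):=1/(4K)$.

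To justify the inclusion, I set $t_0:=\gG_H$. Using \eqref{rr<} one verifies that the left limit $A_{t_0-}$ equals $\sup\set{u\ge0:\gG_u<\gG_H}\le H$, with equality when $\gG$ strictly increases just to the left of $H$. In the favorable case, equality holds, $X_{t_0-}=B_H\in\set{-2K,2K}$ has magnitude $2K$, and the cadlag property of $X$ gives $\sup_{t\le 1}|X_t|\ge |X_{t_0-}|=2K$ as required. The main obstacle is the complementary case, where $\nu$ leaves a gap around $\pm 2K$ on the relevant side; then $A_{t_0-}<H$ and one must analyse the Brownian excursion from $B_H=\pm 2K$ through the $\nu$-null gap. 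Here the assumption $\nu(I)<\infty$ combined with the $\suppoo\nu$-hypothesis forces $\xsoo_-<-2K<2K<\xsoo_+$, so the maximal $\nu$-null interval containing $\pm 2K$ is bounded on both sides by $\supp\nu$-points, at least one of which lies strictly outside $I$; a scale-function computation on this excursion, iterated over successive returns and combined with the absorption at $\suppoo\nu$ provided by \refL{LH}, should yield the full inclusion. This iteration, where the scale-function probabilities must be carefully bookkept against the $X$-time budget, is where I expect the bulk of the technical effort.
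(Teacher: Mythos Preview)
Your overall strategy is sound, and your use of the Green-function bound $\E L_H^x\le 2K$ together with Markov's inequality is actually cleaner than the paper's argument (which instead bounds the random variable $Y=\max_x L_H^x$ and gets a non-explicit constant). However, the pathwise inclusion
\[
\{\Gamma_H<1\}\subseteq\Bigl\{\sup_{0\le t\le 1}|X_t|\ge 2K\Bigr\}
\]
is \emph{false}, and no amount of iterated scale-function bookkeeping will rescue it. Take $\nu=c\,\delta_0+\infty\cdot\delta_{-3K}+\infty\cdot\delta_{3K}$ with $c$ large enough that $\E|X_1|\le K$. Then $X_t\in\{-3K,0,3K\}$ for all $t$, and on the positive-probability event where (i) $B$ makes an excursion from $0$ reaching $2K$ but returning to $0$ before $3K$, and (ii) $L^0$ subsequently reaches $1/c$ before $B$ hits $\pm3K$, one has $\Gamma_H=cL_H^0<1$ while $X_t=0$ for every $t\in[0,1]$. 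The point is that $X$ only sees the values of $B$ on $\supp\nu$, so a Brownian visit to $\pm2K$ inside a $\nu$-gap leaves no trace in the path of $X$.

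The fix is immediate and is exactly what the paper does: apply Doob's inequality not to $(X_t)_{t\le1}$ but to the bounded martingale $(B_{u\wedge A_1})_{u\ge0}$ of \refL{LB}, which is closed by $B_{A_1}=X_1$. By \eqref{rr<} one has $\{\Gamma_H\le1\}=\{H\le A_1\}$, and on this event $B_{H\wedge A_1}=B_H\in\{\pm2K\}$, so trivially $\{\Gamma_H\le1\}\subseteq\{\sup_u|B_{u\wedge A_1}|\ge2K\}$. Doob then gives $\P(\Gamma_H\le1)\le\E|X_1|/(2K)\le1/2$, hence $\P(\Gamma_H\ge1)\ge1/2$, and your Markov bound finishes with the explicit $\kappa(K)=1/(4K)$. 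With this one-line replacement your argument is complete, and the ``main obstacle'' you identified simply disappears.
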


\begin{proof}
We may assume that $\xo=0$.
By  \refL{LB}, $(B_{u\wedge A_1})_{u\ge0}$ is a bounded, and thus uniformly
integrable, martingale, closed by $B_{A_1}=X_1$.   

Let $\HH=H_{\set{\pm2K}}$ be the hitting time of
${\pm2K}$. Then 
\begin{equation}
\P(\gG_{\HH}\le1)=
 \P(\HH\le A_1)\le \P\bigpar{\sup_u|B_{u\wedge A_1}|\ge 2K}  
\le \frac{\E|B_{A_1}|}{2K}
\le\frac{K}{2K}=\frac12.
\end{equation}

Let $Y=\max_x L^x_\HH$; this is a finite random variable so there
exists $c>0$ such that $\P(Y>c)<1/2$. (Note that $Y$ and  $c$ depend on
$K$ but not on $\nu$.) With positive probability we thus have
both $\gG_\HH>1$ and $Y\le c$. This implies, since $L^x_\HH=0$ when $|x|>2K$,
\begin{equation*}
  \begin{split}
  1&<
\gG_\HH=\int_{-2K}^{2K} L^x_\HH\, \nu(dx) 
\le \int_{-2K}^{2K} Y \,\nu(dx) 
\le c\nu[-2K,2K],
  \end{split}
\end{equation*}
and the result follows with $\kk=c^{-1}$.
\end{proof}

\section{The discrete case} 
\label{discrete}

In this section we treat the inverse problem in a discrete setting. 
We fix points $y_0 < y_1 < y_2 <\dots<y_{n+1}$ and
consider 
discrete speed measures $\nu =\sum_{i=0}^{n+1}b_i\delta_{y_i}$,
where $\delta_{a}$ is a unit point mass at the point $a$ and $b_i$ takes
values in $[0,\infty]$. We assume that $b_0=b_{n+1}=\infty$.
We also fix a starting point $\xo\in(y_0,y_{n+1})$. 
(We could for simplicity assume that
$\xo=y_{i_0}$ for some $i_0\in\{1,2,\dots,n\}$, but that is not
necessary.) 

Given such a speed measure $\nu$ and $\xo$, we construct a generalised
diffusion $X$ as 
described in Section~\ref{construction} above.
By \refL{Lsupp}, the process $(X_t)_{t\ge0}$ only takes values in the set 
$\{y_i\}_{i=0}^{n+1}$. Moreover, since $b_0=b_{n+1}=\infty$, the states 
$y_0$ and $y_{n+1}$ are absorbing, so $X$ is bounded, and it
follows from \refL{Lmart} (or \refT{Tekstrom-hobson}) that $X$ is a
martingale; in particular $\E X_t=\xo$.
Let $p_i=\P(X_1=y_i)$ be the probability that $X$ at time 1 is in state $y_i$.
Then $0\leq p_i\leq 1$, $\sum_{i=0}^{n+1} p_i=1$, and 
we also have $\sum_{i=0}^{n+1} y_ip_i=\E X_1=\xo$.

This defines a mapping $G$ from the set of speed measures above to the set of 
distributions with mean $\xo$.
More precisely, we write $G(b_1,\dots,b_n):=(p_0,\dots,p_{n+1})$, and note that
$G:\bn\to \pin$, where $\bn:=[0,\infty]^n$ and
\[\pin:= \Bigset{\pi=(\pi_0,\dots,\pi_{n+1})\in [0,1]^{n+2}:
\sum_{i=0}^{n+1} \pi_i=1\mbox{ and } \sum_{i=0}^{n+1} y_i\pi_i=\xo}
.\]

\begin{lemma}
\label{continuity}
The function $G:\bn\to\pin$ is continuous.
\end{lemma}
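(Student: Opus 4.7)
\emph{Plan.} Since $\bn=[0,\infty]^n$ is metrisable (each factor is homeomorphic to $[0,1]$), it suffices to prove sequential continuity. Take a sequence $b^{(k)}=(b_1^{(k)},\dots,b_n^{(k)})$ in $\bn$ converging to $b=(b_1,\dots,b_n)$, set
\begin{equation*}
\nu_k:=\sum_{i=0}^{n+1}b_i^{(k)}\gd_{y_i},\qquad \nu:=\sum_{i=0}^{n+1}b_i\gd_{y_i},
\end{equation*}
where throughout $b_0^{(k)}=b_{n+1}^{(k)}=b_0=b_{n+1}=\infty$, and let $X^k$, $X$ be the corresponding generalised diffusions, all built from one fixed Brownian motion started at $\xo$. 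We wish to show $p_i^{(k)}:=\P(X_1^k=y_i)\to\P(X_1=y_i)=:p_i$ for each $i$.

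The strategy is to apply \refL{Lconv}(ii) with the open interval $(y_0,y_{n+1})\ni\xo$, and then to transfer the resulting almost sure convergence of $X_1^k$ to convergence of the point probabilities by exploiting the discrete state space. For $\gf\in\ccp(y_0,y_{n+1})$, the support lies in $(y_0,y_{n+1})$ and hence (extending by zero)
\begin{equation*}
\intR\gf\dd\nu_k=\sum_{i=1}^n b_i^{(k)}\gf(y_i)\longrightarrow\sum_{i=1}^n b_i\gf(y_i)=\intR\gf\dd\nu
\end{equation*}
in $\ooox$: split the sum into indices with $b_i<\infty$ (ordinary convergence) and indices with $b_i=\infty$ (in which case $b_i^{(k)}\to\infty$, so the $i$th summand tends to $+\infty$ if $\gf(y_i)>0$ and stays $0$ if $\gf(y_i)=0$). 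For $\gf\in\ccp(\R)$ with $\gf(y_0)>0$ or $\gf(y_{n+1})>0$, both integrals equal $+\infty$ because $b_0=b_{n+1}=\infty$. Hence the hypotheses of \refL{Lconv}(ii) are met, and we conclude $X_1^k\to X_1$ a.s.

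By \refL{Lsupp}, each $X_1^k$ and $X_1$ takes values in the discrete finite set $\set{y_0,\dots,y_{n+1}}$. Thus, off a null set, $X_1^k(\omega)\to X_1(\omega)$ forces $X_1^k(\omega)=X_1(\omega)$ for all sufficiently large $k$. Therefore $\ett{X_1^k=y_i}\to\ett{X_1=y_i}$ a.s.\ for every $i$, and bounded convergence gives $p_i^{(k)}\to p_i$. Since $\pin$ carries the subspace topology from $[0,1]^{n+2}$, this means $G(b^{(k)})\to G(b)$, proving continuity.

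The only mildly delicate step is the verification of hypothesis (ii) of \refL{Lconv} with infinite weights present; once this is handled, the discreteness of the state space makes the passage from a.s.\ convergence of $X_1^k$ to convergence of the $p_i^{(k)}$ essentially automatic.
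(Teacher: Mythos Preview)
Your proof is correct and follows essentially the same route as the paper: both invoke \refL{Lconv} to obtain $X_1^k\to X_1$ a.s., and then deduce convergence of the point probabilities. The paper cites condition (i) of \refL{Lconv} directly (noting that the infinite masses at $y_0,y_{n+1}$ cause no difficulty), whereas you cite condition (ii) with the interval $(y_0,y_{n+1})$; the verifications are effectively identical since the case split you make on whether $\gf(y_0)>0$ or $\gf(y_{n+1})>0$ is exactly what is needed for (i) as well. You also spell out the passage from a.s.\ convergence to convergence of the $p_i$ via discreteness of the state space, which the paper leaves implicit.
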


\begin{proof}
This is a direct consequence of Lemma~\ref{Lconv}.
(Note that the possibility that one or several $b_i=\infty$ is no
problem when we verify condition (i).)
\end{proof}
 
We will use algebraic topology to show that $G$ is surjective; see e.g.\ 
\cite[Chapter IV]{Bredon} for standard definitions and results used below.
We begin by studying the sets $\bn$ and $\pin$.
The set $\bn$ is homeomorphic to the unit cube $[0,1]^n$ with
the boundary 
$\partial \bn:=
[0,\infty]^n\setminus(0,\infty)^n
$
corresponding to the boundary $\partial[0,1]^n=[0,1]^n\setminus(0,1)^n$.
We write
\begin{equation}\label{bdB}
\partial \bn=
 \bigcup_{j=1}^n \bigpar{\partial_{j0}\bn \cup \partial_{j\infty}\bn}
\end{equation}
where
$\partial_{js}\bn:=\set{(b_i)_1^n\in\bn:b_i=s}$.

The set
$\pin$ is the intersection $\gdni\cap\mxo$ of the simplex 
\begin{align*}
\gdni&:=
\Bigset{(\pi_i)_{0}^{n+1}:\pi_i\ge0\text{ and } \sum_{i=0}^{n+1}\pi_i=1}
\intertext{and the hyperplane}
\mxo&:=\Bigset{(\pi_i)_{0}^{n+1}:\sum_{i=0}^{n+1}y_i\pi_i=\xo}
\end{align*}
in $\R^{n+2}$.
Further, $\gD_{n+1}$ lies in the hyperplane 
$L:=\bigset{(\pi_i)_{0}^{n+1}:\sum_{i=0}^{n+1}\pi_i=1}$.
Thus $\pin$ is a compact convex set in the $n$-dimensional plane
$L\cap\mxo$,
which can be identified with $\R^n$.
Since $y_0<\xo<y_{n+1}$, $\mxo$ contains an interior point of $\gdni$, so
$\pin=\gdni\cap\mxo$ has a non-empty relative interior in
$L\cap\mxo$. 
Thus
$\pin$ is an $n$-dimensional compact convex set in $L\cap\mxo$
and its boundary is 
\begin{align*}
  \partial\pin=\pin\cap\partial\gdni
=\bigcup_{j=0}^{n+1}\partial_j\pin,
\end{align*}
where $\partial_j\pin:=\bigset{(\pi_i)_{0}^{n+1}\in\Pi^n:\pi_j=0}$. 

Consequently, both $\bn$ and $\pin$ are homeomorphic to compact convex sets
in $\R^n$ with non-empty interiors. Every such set is homeomorphic to the
unit ball $D^n:=\set{x\in\R^n:|x|\le1}$ via a homeomorphism mapping the
boundary onto the boundary $\bd D^n=\sni$.
Thus there are homeomorphisms $(\bn,\bd\bn)\approx(D^n,\sni)$ and
$(\pin,\bd\pin)\approx(D^n,\sni)$.

\begin{lemma}
\label{Lbdy}
The function $G$ maps $\bd\bn$ into $\bd\pin$, and thus
$G:(\bn,\bd\bn)\to(\pin,\bd\pin)$.
\end{lemma}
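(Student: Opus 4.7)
Since $\bn=[0,\infty]^n$, a point $(b_i)_1^n$ lies in $\bd\bn$ if and only if $b_j\in\set{0,\infty}$ for some $j\in\set{1,\dots,n}$, and $\bd\pin=\bigcup_{k=0}^{n+1}\bd_k\pin$ consists of distributions with at least one zero component. It thus suffices to show that whenever some $b_j\in\set{0,\infty}$, at least one coordinate of $G(b_1,\dots,b_n)=(p_0,\dots,p_{n+1})$ vanishes. I will handle the two cases separately.

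If $b_j=0$, then since $y_0<\dots<y_{n+1}$ are isolated, any sufficiently small open neighbourhood $U$ of $y_j$ satisfies $\nu(U)=b_j=0$, so $y_j\notin\supp\nu$. By \refL{Lsupp}, $X_t\neq y_j$ a.s.\ for every $t\ge0$, giving $p_j=0$. If instead $b_j=\infty$, then $y_j\in\suppoo\nu$, and together with $b_0=b_{n+1}=\infty$ we have $\set{y_0,y_j,y_{n+1}}\subseteq\suppoo\nu$. Consider the sub-case $\xo\in(y_0,y_j)$: by continuity of Brownian paths, $B_u\in[y_0,y_j]$ for $u\le H:=H_{\suppoo\nu}$ (the path must hit $y_0$ or $y_j$ before crossing to $y_{n+1}$). \refL{LH} yields $A_t\le H$, so $X_t\in[y_0,y_j]$; combined with \refL{Lsupp} this gives $X_t\in\set{y_0,\dots,y_j}$, whence $p_k=0$ for $k>j$. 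The sub-case $\xo\in(y_j,y_{n+1})$ is symmetric, and if $\xo=y_j$ then $H=0$ forces $A_t\equiv0$ and $X_t\equiv y_j$, so $p_k=0$ for $k\neq j$. In every case $G(b_1,\dots,b_n)\in\bd\pin$.

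The essential step is the $b_j=\infty$ sub-case with interior $j$: one must convert the infinite mass at $y_j$ into a geometric confinement of the time-changed process via \refL{LH}, ruling out an entire block of atoms on the far side of $y_j$. The $b_j=0$ case is routine, using only that an isolated atom of zero mass cannot belong to $\supp\nu$.
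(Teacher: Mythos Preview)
Your proof is correct and follows essentially the same approach as the paper: for $b_j=0$ you invoke \refL{Lsupp} to get $p_j=0$, and for $b_j=\infty$ you use \refL{LH} to confine $X_t$ to one side of $y_j$, killing the probabilities on the other side. The paper organises the $b_j=\infty$ case as $y_j\le\xo$ versus $y_j\ge\xo$ rather than your three sub-cases, but the argument is the same.
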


\begin{proof}
  If $\bb\in\bd_{j0}\bn$, then $y_j\notin\supp\nu$, so $X_1\neq y_j$ a.s.\
  by \refL{Lsupp} and thus $\pi_j=0$,  so $\ppi\in\bd_{j}\pin$.

If $\bb\in\bd_{j\infty}\bn$, then $y_j\in\suppoo\nu$.
If further $y_j\le\xo$, then \refL{LH} implies that $X_1\ge y_j$ a.s., and
thus $\pi_i=0$ for $0\le i<j$ and, e.g., 
$\ppi\in\bd_{j-1}\pin$.
Similarly, if $y_j\ge\xo$, then $\pi_i=0$ for $j<i\le n+1$
and $\ppi\in\bd_{j+1}\pin$.
\end{proof}

The homeomorphisms above induce isomorphisms of the relative
homology groups
$H_n(\bn,\bd\bn)\approx H_n(\pin,\bd\pin)\approx H_n(D^n,\sni)\approx \Z$.
The mapping degree of the function $G:(\bn,\bd\bn)\to(\pin,\bd\pin)$ can thus
be defined as the integer $\deg(G)$ such that the  homomorphism 
$G_*:H_n(\bn,\bd\bn)\to H_n(\pin,\bd\pin)$ corresponds to multiplication by
$\deg(G)$ on $\Z$. More precisely, this defines the mapping degree up to
sign; the sign depends on the orientation of the spaces, but we have no
reason to care about the orientations so we ignore them and the sign of
$\deg(G)$.

\begin{lemma}\label{Ldeg}
For any $n\ge1$, any $y_0<y_1<\dots<y_{n+1}$, and $\xo\in(y_0,y_{n+1})$,
  $\deg(G)=\pm1$.
\end{lemma}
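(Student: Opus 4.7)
The strategy is induction on $n\ge1$, reducing $\deg(G)$ to the degree of the boundary map $G|_{\bd\bn}\colon\bd\bn\to\bd\pin$ and then exploiting the inductive hypothesis after deleting one interior point $y_j$.

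\emph{Base case $n=1$.} Here $\bn=[0,\infty]$ and $\pin$ is a line segment, so $\bd\bn=\set{0,\infty}$ and $\bd\pin$ is a two-point set. A direct computation shows $G(0)\neq G(\infty)$: by \refL{Lsupp}, $b_1=0$ forces $\pi_1=0$, yielding one endpoint of $\pin$; by \refL{LH}, $b_1=\infty$ forces $\pi_0=0$, $\pi_2=0$, or $\pi_1=1$ according as $y_1>\xo$, $y_1<\xo$, or $y_1=\xo$, yielding in each case the opposite endpoint. Thus $G|_{\bd\bn}$ is a bijection of two-point sets, and the long exact sequence of $(\bn,\bd\bn)$ identifies this with $G_*\colon H_1(\bn,\bd\bn)\to H_1(\pin,\bd\pin)$ acting as $\pm1$.

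\emph{Inductive step, $n\ge2$.} Naturality of the connecting homomorphism in the long exact sequences of $(\bn,\bd\bn)$ and $(\pin,\bd\pin)$ (both pairs homeomorphic to $(D^n,\sni)$) yields the commutative square
\[
\begin{CD}
H_n(\bn,\bd\bn) @>{G_*}>> H_n(\pin,\bd\pin)\\
@VV{\partial}V @VV{\partial}V\\
H_{n-1}(\bd\bn) @>{(G|_{\bd\bn})_*}>> H_{n-1}(\bd\pin)
\end{CD}
\]
with vertical arrows isomorphisms, so $\deg(G)=\pm\deg(G|_{\bd\bn})$. Pick any $j\in\set{1,\dots,n}$ and a point $p$ in the relative interior of the face $\bd_j\pin$ (so $\pi_j=0$ and $\pi_i>0$ for $i\neq j$). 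I claim $G^{-1}(p)\cap\bd\bn$ lies in the relative interior of the single face $\bd_{j0}\bn$. Indeed, \refL{Lbdy} gives $G(\bd_{j'0}\bn)\subset\bd_{j'}\pin$ for $j'\neq j$, forcing $\pi_{j'}=0$ and excluding $p$. For $\bd_{j'\infty}\bn$ with $y_{j'}<\xo$, the argument in the proof of \refL{Lbdy} combined with \refL{LH} shows the image is contained in $\set{\pi_0=\dots=\pi_{j'-1}=0}$, forcing $\pi_0=0$, which is excluded since $j\ge1$; symmetrically $y_{j'}>\xo$ forces $\pi_{n+1}=0$, excluded since $j\le n$; while $y_{j'}=\xo$ collapses the image to a single vertex of $\pin$.

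\emph{Completion and main obstacle.} By excision, the local degree of $G|_{\bd\bn}$ at $p$ therefore equals the local degree of $G|_{\bd_{j0}\bn}$ at $p$. Under the natural homeomorphisms $\bd_{j0}\bn\cong\bni$ and $\bd_j\pin\cong\pini$ corresponding to the reduced problem obtained by deleting $y_j$, the restriction $G|_{\bd_{j0}\bn}$ is precisely the analogous map $G^{(j)}$ for the $n-1$ intermediate points $(y_i)_{i\neq j}$ and the same starting point $\xo$. The inductive hypothesis gives $\deg(G^{(j)})=\pm1$, whence $\deg(G|_{\bd\bn})=\pm1$ and $\deg(G)=\pm1$. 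The main technical obstacle is the boundary-image analysis above: one must exclude preimages of the open face $\bd_j\pin$ coming from any stratum of $\bd\bn$ other than $\bd_{j0}\bn$. Once this codimension bookkeeping is in place, the connecting-homomorphism reduction and the inductive call are routine, and orientation signs can be ignored since only $|\deg(G)|=1$ is asserted.
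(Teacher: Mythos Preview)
Your proof is correct and follows essentially the same strategy as the paper: induction on $n$, reduction to the boundary map via the connecting homomorphism in the long exact sequence, and then identification of the restriction to a single face with the $(n-1)$-dimensional problem. The one genuine difference is the choice of face. The paper (assuming $\xo<y_n$) uses the face $\bd_{n\infty}\bn$, where $b_n=\infty$ makes $y_n$ absorbing and renders $y_{n+1}$ irrelevant, so the reduced problem lives on $y_0,\dots,y_n$; it then invokes the hemisphere fact that if $F:S^{n-1}\to S^{n-1}$ preserves both hemispheres then $\deg F$ equals the relative degree on the upper hemisphere. You instead use the face $\bd_{j0}\bn$, where $b_j=0$ simply deletes $y_j$, so the reduced problem lives on $y_0,\dots,\widehat{y_j},\dots,y_{n+1}$; your preimage analysis showing $G^{-1}(p)\cap\bd\bn\subset\mathrm{int}\,\bd_{j0}\bn$ for $p$ interior to $\bd_j\pin$ is exactly what is needed to run the same hemisphere/collapsing argument (your ``local degree via excision'' is an equivalent formulation). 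Your route avoids the case split on the position of $\xo$ and gives a slightly cleaner identification $\bd_{j0}\bn\cong\bni$, at the cost of a marginally longer verification that no other boundary face can hit the interior of $\bd_j\pin$; the paper's choice makes that verification a one-liner from Lemma~\ref{Lbdy}.
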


\begin{proof}
We use induction on the dimension $n$. We sometimes write $G=G_n$ for clarity.
For the induction step, we assume $n\ge2$.
%
The long exact homology sequence yields
the commutative diagram
\begin{equation*}
\minCDarrowwidth20pt
  \begin{CD}
0=H_n(\bn)@>>> H_n(\bn,\bd\bn)@>\bd>> H_{n-1}(\bd\bn)@>>> H_{n-1}(\bn)=0  \\
   @.          @VVG_*V                 @VVG_*V             @.             \\
0=H_n(\pin)@>>> H_n(\pin,\bd\pin)@>\bd>> H_{n-1}(\bd\pin)@>>> H_{n-1}(\pin)=0	
  \end{CD}
\end{equation*}
where the rows are exact; thus the connecting homomorphisms $\bd$ are 
isomorphisms, and the degree of $G:(\bn,\bd\bn)\to(\pin,\bd\pin)$ equals the
degree of the restriction $G:\bd\bn\to\bd\pin$.

Assume $\xo< y_n$. (Otherwise $\xo\ge y_n>y_1$, and we may argue similarly
using $\bd_{1\infty}\bn$ and $\bd_{0}\pin$.)
We single out the faces $\bd_{n\infty}\bn$ and $\bd_{n+1}\pin$ of the
boundaries and define
$\bd_*\bn:=\bigcup_{i=1}^n\bd_{i0}\bn\cup\bigcup_{i=1}^{n-1}\bd_{i\infty}\bn$
  and
$\bd_*\pin:=\bigcup_{i=0}^n\bd_{i}\pin$.
By the proof of \refL{Lbdy}, $G:\bd_{n\infty}\bn\to\bd_{n+1}\pin$  and
$G:\bd_{*}\bn\to\bd_{*}\pin$. 

We claim that the degree of $G:\bd\bn\to\bd\pin$
equals the degree of 
$G:
(\bd_{n\infty}\bn,\bd_{n\infty}\bn\cap \bd_{*}\bn)
\to(\bd_{n+1}\pin,\bd_{n+1}\pin\cap \bd_{*}\pin)$.
Using homeomorphisms $\bd\bn\approx\sni$
and
$\bd\pin\approx\sni$ that map the faces
$\bd_{n\infty}\bn$ and $\bd_{n+1}\pin$ onto the upper hemisphere $\sni_+$,
this is an instance of the general fact that if $F:\sni\to\sni$ is
continuous and maps $\sni_\pm\to\sni_\pm$, then the degree of 
$F:(\sni_+,\sni_+\cap\sni_-)\to(\sni_+,\sni_+\cap\sni_-)$ equals
the degree of $F:\sni\to\sni$.

If $\bb\in\bd_{n\infty}\bn$, then $\nu$ has infinite point masses at both
$y_n$ and $y_{n+1}$, with $\xo<y_n<y_{n+1}$. By \refL{LH}, we can ignore
$y_{n+1}$ and we obtain the same generalized diffusion $X$ as with the speed
measure $\sum_{i=0}^n b_i\gd_{y_i}$. We can thus identify $\bd_{n\infty}\bn$
with $\bni$ (based on the points $(y_i)_0^n$). Furthermore, there is an
obvious identification $\bd_{n+1}\pin=\pini$, and with these identifications,
$G:\bd_{n\infty}\bn\to\bd_{n+1}\pin$ corresponds to
$G_{n-1}:\bni\to\pini$.
Moreover, the various boundaries correspond so that we have the commutative
diagram
\begin{equation*}
\minCDarrowwidth10pt
  \begin{CD}
H_{n-1}(\bd_{n\infty}\bn,\bd_{n\infty}\bn\cap \bd_{*}\bn)
 @= H_{n-1}(\bni,\bd\bni)
\\
   @VVG_{n\,*}V                 @VVG_{n-1\,*}V     \\
H_{n-1}(\bd_{n+1}\pin,\bd_{n+1}\pin\cap \bd_{*}\pin)
 @= H_{n-1}(\pini,\bd\pini)
  \end{CD}
\end{equation*}
where the rows are the isomorphisms given by these identifications.
Hence 
the degree of 
$G_n:
(\bd_{n\infty}\bn,\bd_{n\infty}\bn\cap \bd_{*}\bn)
\to(\bd_{n+1}\pin,\bd_{n+1}\pin\cap \bd_{*}\pin)$
equals $\deg(G_{n-1})$.

Combining this with the equalities above, we see that 
$\deg(G_{n})=\deg(G_{n-1})$, which completes the induction step.

It remains to treat the initial case $n=1$. In this case $B^1$ and $\Pi^1$
are intervals, and can be parametrized by $b_1$ and $p_1$. It is easy to see
that the mapping $G:b_1\mapsto p_1$ is strictly increasing, and thus 
a homeomorphism $B^1\to\Pi^1$; hence
$G_*:H_1(B^1,\bd B^1)\to H_1(\Pi^1,\bd\Pi^1)$ is an isomorphism, so
$\deg(G)=\pm1$. 
Alternatively, we may use the first commutative diagram above also in the
case $n=1$, replacing the
homology groups $H_{n-1}=H_0$ by the reduced homology groups $\tH_0$.
The sets $\bd B^1$ and $\bd\Pi^1$ contain exactly two elements each, and it
is easy to see that $G:\bd B^1\to\bd\Pi^1$ is a bijection and thus 
$G_*:\tH_0(\bd B^1)\to\tH_0(\bd\Pi^1)$ is an isomorphism.
\end{proof}

\begin{theorem}
\label{main}
The function $G$ is surjective,
for any $n\ge1$ and any $y_0<y_1<\dots<y_{n+1}$ and $\xo\in(y_0,y_{n+1})$.
Consequently, the discrete inverse problem has a solution.
\end{theorem}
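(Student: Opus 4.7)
The plan is to deduce surjectivity of $G$ directly from \refL{Ldeg} via the standard algebraic-topology principle that a continuous map of pairs $(\bn,\bd\bn)\to(\pin,\bd\pin)$ with nonzero mapping degree is surjective. The hard work has already been done: \refL{continuity}, \refL{Lbdy}, and \refL{Ldeg} combine to give continuity of $G$, the fact that $\bd\bn$ is carried into $\bd\pin$, and $\deg(G)=\pm1$, respectively.

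I would argue by contradiction. Suppose some $\pi^{*}\in\pin$ is not in $G(\bn)$, and first assume that $\pi^{*}$ lies in the relative interior of $\pin$. Then $G$ factors as
\[
(\bn,\bd\bn)\xrightarrow{G}(\pin\setminus\set{\pi^{*}},\bd\pin)\hookrightarrow(\pin,\bd\pin).
\]
Since $(\pin,\bd\pin)\approx(D^n,S^{n-1})$ and $\pi^{*}$ corresponds to an interior point of $D^n$, the set $\pin\setminus\set{\pi^{*}}$ deformation retracts onto $\bd\pin$; hence $H_n(\pin\setminus\set{\pi^{*}},\bd\pin)=0$. The induced homomorphism $G_{*}:H_n(\bn,\bd\bn)\to H_n(\pin,\bd\pin)$ therefore factors through zero and must vanish, contradicting $\deg(G)=\pm1$.

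To cover boundary points $\pi^{*}\in\bd\pin$ as well, I would invoke a closure argument: $\bn$ is compact and $G$ is continuous, so $G(\bn)$ is a closed subset of $\pin$. By the previous step it contains the relative interior of $\pin$, which is dense in $\pin$, whence $G(\bn)=\pin$. The final clause of the theorem---that the discrete inverse problem has a solution---is then immediate: given any target distribution $(p_0,\dots,p_{n+1})\in\pin$, surjectivity of $G$ produces weights $(b_1,\dots,b_n)\in\bn$ whose associated generalised diffusion has the prescribed law at time $1$.

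The main obstacle is really the degree computation already accomplished in \refL{Ldeg}; once $\deg(G)\neq 0$ is in hand, the argument above is a textbook application of degree theory. The only minor point I would need to verify is the triviality of $H_{n}(\pin\setminus\set{\pi^{*}},\bd\pin)$, which is the standard fact that the pair $(D^n\setminus\set{0},S^{n-1})$ is homotopy-equivalent to $(S^{n-1},S^{n-1})$ via radial projection.
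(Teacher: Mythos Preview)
Your proposal is correct and follows essentially the same route as the paper: the paper's proof is a one-line invocation of the standard fact that a continuous map of pairs $(D^n,S^{n-1})\to(D^n,S^{n-1})$ that is not surjective has mapping degree $0$, which contradicts \refL{Ldeg}. You have simply unpacked that standard fact by the usual argument (factoring through $H_n(\pin\setminus\{\pi^*\},\bd\pin)=0$ for interior points, then a compactness/closure argument for the boundary), which is fine and adds no genuinely new ingredient.
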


\begin{proof}
An immediate consequence of  \refL{Ldeg}, since a function 
$(\bn,\bd\bn)\to (\pin,\bd\pin)$ 
(or, equivalently, 
$(D^n,\sni)\to(D^n,\sni)$)
that is not
surjective has mapping degree 0. 
\end{proof}

\section{The general case}\label{general}

In this section we study the inverse problem for arbitrary distributions
on the real axis. To do this, we approximate the given distribution with a 
sequence of discrete distributions. For each discrete distribution we can find
a discrete speed measure that solves the inverse problem 
according to Section~\ref{discrete}. We then show that
the sequence of discrete speed measures has a convergent subsequence,
and that the limit solves the inverse problem.
We begin with a lemma giving the approximation that we shall use.

\begin{lemma}\label{Lmy}
  Let $\mu$ be a probability measure on $\R$ with finite mean 
$\bmu=\intR x\,\mu(dx)$. Then 
  there exists a sequence $\mu_n$, $n\ge1$, of probability measures with finite
  supports such that, as \ntoo,
  \begin{romenumerate}
  \item \label{LLw}
$\mu_n\to\mu$ weakly;
  \item \label{LLinf}
$\inf\supp\mu_n\to\inf\supp\mu$;
  \item \label{LLsup}
$\sup\supp\mu_n\to\sup\supp\mu$;
  \item  \label{LLmean}
each $\mu_n$ has the same mean $\bmu$ as $\mu$.
\item \label{LL1} 
$\intR|x|\,d\mu_n(x)\to\intR|x|\,d\mu(x)$.
  \end{romenumerate}
If further $\mu$ has finite variance $\Var(\mu)=\intR(x-\bmu)^2\mu(dx)$, then
$\mu_n$ can be chosen such that
\begin{romenumerateq}
\item  \label{LLvar}
$\Var(\mu_n)\to\Var(\mu)$.
\end{romenumerateq}
\end{lemma}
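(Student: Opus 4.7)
The plan is to discretise $\mu$ by partitioning the real line into short intervals and replacing the restriction of $\mu$ to each one by a point mass of the same total weight placed at the conditional mean. Since the conditional mean $m_k:=p_k^{-1}\int_{J_k}x\,d\mu$ on an interval $J_k$ of positive mass $p_k:=\mu(J_k)$ lies inside $J_k$, refining the partition while pushing the cut-offs out to $\pm\infty$ (or to the endpoints of $\supp\mu$ when these are finite) will produce a sequence with the desired properties.

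Concretely, I would fix $a_n\downarrow\inf\supp\mu$ and $b_n\uparrow\sup\supp\mu$ (taking $a_n$ or $b_n$ to be the corresponding endpoint whenever it is finite), partition $[a_n,b_n]$ into intervals $J_1,\dots,J_{K_n}$ of length at most $1/n$, and adjoin a tail piece $J_0=(-\infty,a_n)$ or $J_{K_n+1}=(b_n,\infty)$ whenever $\inf\supp\mu=-\infty$ or $\sup\supp\mu=+\infty$. Define $\mu_n:=\sum_{k:p_k>0}p_k\delta_{m_k}$. Property \ref{LLmean} is immediate from
\[
\intR x\,d\mu_n=\sum_k p_km_k=\sum_k\int_{J_k}x\,d\mu=\bmu.
\]

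Properties \ref{LLinf} and \ref{LLsup} hold because the extremal atom of $\mu_n$ lies in the extremal interval of the partition, which approaches the extremal point of $\supp\mu$; when $a:=\inf\supp\mu$ is finite one uses that $a\in\supp\mu$ implies $\mu([a,a+\delta])>0$ for every $\delta>0$, so the leftmost interval carries positive mass and its atom lies within $1/n$ of $a$. For \ref{LLw}, given a bounded continuous $f$ the difference $\int f\,d\mu_n-\int f\,d\mu=\sum_k\int_{J_k}(f(m_k)-f(x))\,d\mu$ is controlled by the modulus of continuity of $f$ on a large compact set together with the vanishing mass of the tail pieces. For \ref{LL1}, on every piece contained in $[0,\infty)$ or in $(-\infty,0]$ one has $|m_k|p_k=\int_{J_k}|x|\,d\mu$ by linearity; the only discrepancy comes from the piece containing $0$, on which $|x|\le1/n$, so the error is $O(1/n)$. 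Finally \ref{LLvar} follows from the variance decomposition
\[
\Var(\mu)-\Var(\mu_n)=\sum_k\int_{J_k}(x-m_k)^2\,d\mu,
\]
in which the interior terms are each bounded by $p_k/n^2$ and the two tail terms are dominated by $\int_{J_0\cup J_{K_n+1}}(x-\bmu)^2\,d\mu$, which tends to $0$ as $a_n\to-\infty$ and $b_n\to+\infty$ provided $\Var(\mu)<\infty$.

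The step requiring the most care is coordinating all six requirements simultaneously: the partition must be fine enough for \ref{LLw}, \ref{LL1} and \ref{LLvar}, extend far enough into the tails for \ref{LLw} and \ref{LLvar}, and meet $\inf\supp\mu$ and $\sup\supp\mu$ with a piece of positive mass for \ref{LLinf} and \ref{LLsup}. Once the partition is chosen with these constraints in mind, each property is a short computation and no deep obstruction arises.
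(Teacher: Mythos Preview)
Your proof is correct and takes a genuinely different route from the paper's. The paper works with a random variable $Y\sim\mu$: it truncates to $[-n,n]$, rounds to the lattice $\tfrac1n\Z$, and then \emph{shifts by a constant} to restore the mean; properties \ref{LLw}--\ref{LLvar} are then derived from the pointwise bound $|Y_n-Y|\le |Y|\ett{|Y|>n}+\tfrac1n+|\E Y_n''-\E Y|$. Your construction instead takes the conditional expectation of $Y$ on a partition into short intervals (with at most two semi-infinite tail cells), which has two pleasant consequences the paper's approach lacks: the mean is preserved automatically, so no correction shift is needed and \ref{LLmean} is immediate; and the law of total variance gives the exact identity $\Var(\mu)-\Var(\mu_n)=\sum_k\int_{J_k}(x-m_k)^2\,d\mu$, making \ref{LLvar} a one-line estimate. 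Your argument for \ref{LL1} is similarly clean because $|m_k|p_k=\int_{J_k}|x|\,d\mu$ on every cell not straddling the origin, including the tail cells. The paper's method, on the other hand, avoids having to argue that the conditional mean on an unbounded tail cell is finite and tends to $\pm\infty$ (which you need for \ref{LLinf}--\ref{LLsup}; it follows easily from $\E|Y|<\infty$ and $m_0\le a_n$, but should be said). Both approaches are short; yours is arguably the more conceptual one, since it is precisely $Y_n=\E(Y\mid\cF_n)$ for a sequence of finite $\sigma$-algebras $\cF_n$ generating the Borel sets.
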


\begin{proof}
Let $Y$ be a random variable with distribution $\mu$. First, truncate $Y$ at
$\pm n$ by defining
\begin{equation*}
  Y_n':=(Y\wedge n)\vee(-n).
\end{equation*}
Then, letting all limits in this proof be for \ntoo, 
\begin{equation*}
  \E|Y_n'-Y|\le\E(|Y|;|Y|>n)
\to0.
\end{equation*}
Next, discretize by defining
\begin{equation*}
  Y_n'':=\frac1n\floor{nY_n'}.
\end{equation*}
Clearly, $|Y_n''-Y_n'|<1/n$. It follows that
\begin{equation*}
  |\E Y_n''-\E Y| \le \E|Y_n''-Y|
\le\frac1n+\E|Y_n'-Y|\to0.
\end{equation*}
Finally, we adjust the mean by defining 
\begin{equation}\label{yn}
Y_n:=Y_n''-\E(Y_n''-Y).  
\end{equation}
Thus $\E Y_n=\E Y=\bmu$.

Let $\mu_n=\cL(Y_n)$, the distribution of $Y_n$. Then $\mu_n$ has
finite support and (iv) holds by the construction.
Furthermore, by \eqref{yn},
  \begin{equation}\label{ynk}
\E|Y_n-Y|	
\le
2\E|Y_n''-Y| \to0	,
  \end{equation}
which implies $Y_n\dto Y$ and thus  (i).
From \eqref{ynk} we also have $\E|Y_n|\to\E|Y|$, which is \ref{LL1}.

If $\inf\supp\mu=-\infty$, then (i) implies that $\inf\supp\mu_n\to-\infty$,
so (ii) holds.

Suppose now that $\inf\supp\mu=a>-\infty$. If $n>|a|$, then
$\inf\supp\cL(Y_n')=a$, and it follows that
\begin{equation*}
 |\inf\supp\mu_n-a|\le\frac1n+|\E Y_n''-\E Y| \to0,
\end{equation*}
which shows that (ii) hold in this case too.

The proof of (iii) is similar, mutatis mutandis.

If $\mu$ has finite variance, then $\E Y^2<\infty$, and
$\E|Y_n'|^2 = \E(|Y|\wedge n)^2 \to \E Y^2$.
Taking square roots we find $\normq{Y_n'}\to\normq{Y}$.
Minkowski's inequality yields
\begin{equation*}
  \bigabs{\normq{Y_n}-\normq{Y_n'}}
\le \normq{Y_n-Y_n'}\le\frac1n+\bigabs{\E(Y_n''-Y)}
\to0.
\end{equation*}
Consequently, $\normq{Y_n}\to\normq{Y}$, and thus $\E Y_n^2\to\E Y^2$.
Since $\E Y_n=\E Y$, this implies $\Var(Y_n)\to\Var(Y)$, which shows (v).
\end{proof}

\begin{proof}[Proof of \refT{cont}]
We may for simplicity assume that $\xo=\bmu=0$.

Let $a_-=\inf\supp\mu\ge-\infty$ and $a_+=\sup\supp\mu\le+\infty$.
Since $\bmu=0$, we have $a_-\le0\le a_+$. 
Moreover, if $a_-=0$ or $a_+=0$, 
then necessarily $\mu=\gd_0$. In this case we may simply take
$\nu$ as an infinite point mass at 0; then $A_t=0$ and $X_t=0$ for all
$t\ge0$ a.s., see
\refE{Epoint}. In the sequel we thus assume $-\infty\le a_-<0$ and
$0<a_+\le \infty$.
 
Let $\mu_n$ be a sequence of distributions satisfying \ref{LLw}--\ref{LL1} in
\refL{Lmy}.
The distributions $\mu_n$ have finite supports, and
thus Theorem~\ref{main} shows that there exist speed measures $\nu_n$ 
so that the corresponding generalised diffusion $X^n$ has distribution
$\mu_n$ at time 1.

If $0<b<a_+$, choose $a\in(b,a_+)$. Then $a>b>0$ and $a<\sup\supp\mu$, so
$\mu(a,\infty)>0$.
Since $\mu_n\to\mu$, $\liminf_\ntoo\mu_n(a,\infty)\ge\mu(a,\infty)$, so for
all large $n$, $\mu_n(a,\infty)>\frac12\mu(a,\infty)>0$.
\refL{LC} applies and implies that $\nu_n[0,b]\le C=C(b)$ for all large $n$,
i.e., $\limsup_\ntoo\nu_n[0,b]<\infty$ for every $b<a_+$.

By a symmetric argument, we also have $\limsup_\ntoo\nu_n[b,0]<\infty$ for
every $b>a_-$.

Choose sequences $b_-^m\downto a_-$ and $b_+^m\upto a_+$.
On each interval $\bbm$, the measures $\nu_n$ are, as we just have shown,
uniformly bounded if we exclude a finite number of small $n$.
Since $\bbm$ is compact, we may thus choose a subsequence of $\nu_n$ such
that the restrictions to $\bbm$ converge to some measure $\hat\nu_m$.
By a diagonal procedure, we can do this simultaneously for all $m$, which
provides a subsequence of $\nu_n$ such that (along the subsequence)
$\nu_n\to\hat\nu_m$ on $\bbm$ for every $m$. In the sequel we consider only
this subsequence.

It follows that $\gL(f)=\lim_\ntoo\intR f\dd\nu_n$ exists and is finite for
every $f\in\ccp(a_-,a_+)$. Clearly, $\gL$ is a positive linear functional on
$\ccp\aax$, so by the Riesz representation theorem there exists a Borel
measure $\tnu$ on $\aax$ with $\gL f=\intR f\dd\tnu$.
Thus $\intR f\dd\nu_n\to\int f\dd\tnu$ for all $f\in\cc\aax$.

We define $\nu$ by adding infinite point masses at $a_-$ and $a_+$, if these
are finite:
$\nu=\tnu+\infty\cdot\gd_{a_-}+\infty\cdot\gd_{a_+}$
(where $\gd_{\pm\infty}=0$).

If $a_-$ is finite and $f\in\ccp(\R)$ with $f(a_-)>0$, then 
$\int f\dd\nu\ge f(a_-)\nu\set{a_-}=\infty$.
Furthermore, $a_-^n:=\inf\supp\mu_n\to a_-$, so $f(a^n_-)>0$ for all large
$n$. The construction of $\nu_n$ in \refT{main} gives $\nu_n$ an infinite
point mass at $a_-^n$, so $\int f\dd\nu_n=\infty$ for all large $n$. Thus
$\int f\dd\nu_n\to\int f\dd\nu=\infty$ as \ntoo.
Similarly, $\int f\dd\nu_n\to\int f\dd\nu=\infty$ as \ntoo{} if $f(a_+)>0$.

The assumptions of \refL{Lconv}(ii) are satisfied, and thus a.s.\
$A^n_1\to A_1$ and, if $\nu\neq0$, $X_1^n\to X_1$ as \ntoo. 
In particular, then $X_1^n\to X_1$ in distribution,
and since $X_1^n$ has distribution $\mu_n$ and $\mu_n\to\mu$, it follows
that the distribution of $X_1$ is $\mu$.

It remains to verify that the  measure $\nu$ is non-zero.
If $a_-$ or $a_+$ is finite, this is clear since $\nu$ by construction has a
point mass there. 

If
$a_\pm=\pm\infty$, we use \refL{Lmy}\ref{LL1} which yields
$\E|X_1^n|=\intR|x|\,d\mu_n(x)\to\intR|x|\,d\mu(x)<\infty$.
Let $K:=\sup_n\E|X_1^n|<\infty$.
By \refL{Llower}, there exists $\kk>0$ such that $\nu_n[-2K,2K]\ge\kk$ for
every $n$.
Let $f\in \ccp(-\infty,\infty)$ with $f=1$ on $[-2K,2K]$.
As shown above, $\intR f\,d\nu_n\to\intR f\,d\tnu=\intR f\,d\nu$ as \ntoo.
Since $\intR f\,d\nu_n\ge\nu_n[-2K,2K]\ge\kk$, this implies $\intR
f\,d\nu\ge\kk>0$, and thus $\nu\neq0$.

This proves the existence of a non-zero speed measure $\nu$ such that $X_1$
has the desired distribution $\mu$.
We next prove that $X_t$ is a  martingale.

We have shown that $X^n_1\to X_1$ a.s.\
and, by \refL{Lmy}\ref{LL1}, $\E|X_1^n|\to\E|X_1|$. 
This implies $\E|X_1^n-X_1|\to0$, i.e.\ $X_1^n\to X_1$ in $L^1$
(see e.g.\ \cite[Proposition 4.12]{Kallenberg}).
For each $n$, $(B_{A_1^n\wedge u})_{u\ge0}$ is 
by \refL{LB}
a bounded martingale with limit $B_{A_1^n}=X_1^n$, and thus
$B_{A_1^n\wedge u}=\E(X_1^n\mid\cF_u)$ for every $u$.
As \ntoo, a.s.\ $A_1^n\to A_1$ and thus $B_{A_1^n\wedge u}\to B_{A_1\wedge u}$.
Further, we have just shown $X_1^n\to X_1$ in $L^1$, and this implies 
$B_{A_1^n\wedge u}=\E(X_1^n\mid\cF_u)\to \E(X_1\mid\cF_u)$ in $L^1$. 
The two limit results both hold in probability, so the limits must coincide:
$B_{A_1\wedge u}= \E(X_1\mid\cF_u)$.

This proves that $(B_{A_1\wedge u})_{u\ge0}$ is a uniformly integrable
martingale. Consequently, for any $(\cF_t)$-stopping time $\tau$,
$B_{A_1\wedge \tau}= \E(X_1\mid\cF_\tau)$.
In particular, for $0-\le t\le 1$,
$X_t=B_{A_t}= \E(X_1\mid\cF_{A_t})$, which proves that $(X_t)_{t\le 1}$ is a
martingale. 
This completes the main part of the proof, and we turn to \ref{cont-X_0} and
\ref{cont-var}.

By \refL{L0}, $X_0=\bmu$ if and only if $\bmu\in\supp\nu$.
If $\bmu\notin\supp\nu$, then there exists an open set $U$ with $\bmu\in U$
and $\nu(U)=0$. By \refL{Lsupp}, $X_1\notin U$ a.s., so
$\bmu\notin\supp\mu$.
On the other hand, if $\bmu\in\supp\nu$ and $U$ is any open set containing
$\bmu$, then $\nu(U)>0$. It is easy to see that there is a positive
probability that $B_t$ will remain in $U$ until $\gG_t>1$, and thus $X_1\in
U$. Hence $\mu(U)=\P(X_1\in U)>0$ for any such $U$, so $\bmu\in\supp\mu$.

If $\mu$ has finite variance, 
we may by \refL{Lmy}(v) assume that $\Var\mu_n\to\Var\mu$, and thus
$\sup_n\Var\mu_n<\infty$. 
\refL{LA} applies to every $\nu_n$ and yields
\begin{equation*}
  \E A_1^n = \E (X_1^n)^2=\Var(\mu_n).
\end{equation*}
Consequently, by Fatou's lemma,
\begin{equation}\label{emu}
  \E A_1=\E\lim_\ntoo A_1^n 
\le \liminf_\ntoo\E A_1^n
= \liminf_\ntoo\Var(\mu_n)
=\Var(\mu)<\infty.
\end{equation}
\refL{LWald} shows that $\E A_1=\Var(\mu)$ in this case (so
there is equality in \eqref{emu}), and also if $\Var(\mu)=\infty$.
\end{proof}

\newcommand{\BBB}{B^{(3)}}
\begin{example}
As an illustration of Theorem~\ref{cont}, let $Y_t=|\BBB_t|^{-1}$, 
where $\BBB$ is a
3-dimensional Brownian motion with $|\BBB_0| = \yo^{-1}$ for some $\yo> 0$.
It is well-known that $Y$ is a local martingale bounded from below, hence a supermartingale,
but it is not a true martingale, compare \refT{Tmartingale} below.
Moreover, $Y$ can be represented as the solution of
\begin{equation}\label{x1}
  \begin{cases}
dY_t=Y^2_t\,dW_t, & t>0,\\
Y_0=\yo	
  \end{cases}
\end{equation}
for some standard Brownian motion $W$.
Note that $Y$ is a diffusion by \eqref{x1}, and by \refE{Ediffusion} 
(together with a stopping argument), $Y$ is
the generalized diffusion with speed measure $x^{-4}dx$, $x>0$.
(Note that $Y_t$ tends to 0 as $\ttoo$, but never reaches 0.)
Being the reciprocal of a Brownian motion, 
$Y$ has an explicit density, compare \cite[Example  2.2.2]{CH}.
The density of $Y_1$ decays like $Cx^{-4}$
for large $x$, so only moments of order strictly less than three exist finitely.
Moreover, the expected value of $Y_1$ is strictly smaller than the starting
value $\yo$. 

Theorem~\ref{cont} provides the existence of a time-homogeneous
generalised diffusion  
$X$ which is a true martingale such that $X_1$ and $Y_1$
have the same distribution, and $X_0=\E Y_1<y_0$. Consequently, there are in this case two different 
speed measures that give rise to the same distribution at time 1. However, only one
of the corresponding processes is a martingale, and they have different starting points.
\end{example}

\begin{example}  
\label{X2}
A related example is when $Y$ is the solution of 
\begin{equation*}
  \begin{cases}
dY_t=(1+Y^2_t)\,dW_t, & t>0,\\
Y_0=0.
  \end{cases}
\end{equation*}
By \refE{Ediffusion}, the diffusion $Y$ is the generalized
diffusion with speed measure
$(1+x^2)^{-2}dx$.  
Again, $Y$ is a local martingale, but not a martingale, see 
Theorems \ref{Tekstrom-hobson} and \ref{Tmartingale} below. 

Define $h(x,t)=e^{Mt}g(x)$, where $g$ is a smooth positive function satisfying
$g(x)=|x|$ for $|x|\geq 1$ and $g(x)\geq |x|$ everywhere, and $M$ is a positive
constant so that $h_t\geq \frac{1}{2}(1+x^2)^2h_{xx}$.
Let $u_N(x,t)=\E_x \vert Y_t\vert\wedge N$, where the index indicates that $Y_0=x$.
By a maximum principle argument, compare \cite{ET}, 
$u_N(x,t)\leq h(x,t)$ independently of $N$. Consequently, by monotone convergence, 
$\E_0 |Y_t|\leq h(0,t)<\infty$.

Theorem~\ref{cont} thus applies and provides a
generalised diffusion $X_t$ which is a martingale started at 0 such that $X_1$ has 
the same distribution as $Y_1$. Consequently, we have in this case two different 
generalised diffusions with the same starting point
that give rise to the same distribution at time 1. However, only one
of these processes is a martingale.
\end{example}

\section{An application to Mathematical Finance}
\label{finance}  

In this section we study an inverse problem in Mathematical Finance. Let $X$
be a non-negative 
martingale with $X_0=\xo$, and consider the expected values
\begin{equation}
\label{C}
C(K,T):=\E (X_T-K)^+.
\end{equation}
Here $X_t$ has the interpretation as a price process of a stock,
and the expected value $C(K,T)$ is the price of a call option with strike $K$ and maturity $T$
(for the sake of simplicity, we assume that interest rates are zero). The function $C$ is 
non-increasing and convex as a function of $K$, and it satisfies
$C(0,T)=\xo$, $C(\infty,T)=0$ and 
$C(K,T)\ge (\xo-T)^+$.

In Mathematical Finance, the corresponding inverse problem is of great interest.
Option prices $C(K,T)$ are observable in the market, at least for a large 
collection of strikes $K$ and maturities $T$, whereas the stock price model
is not.
Under some regularity conditions (often neglected in the literature),
Dupire \cite{D} determines a local volatility model 
\[dX_t=\sigma(X_t,t)\,dW_t\]
such that \eqref{C} holds for all $K$ and $T$. To do this, naturally
one needs call option prices $C(K,T)$ given
for all strikes $K\geq 0$ and all maturities $T>0$.

The assumption that call option prices are known for all maturities $T$ is 
often unnatural in applications - indeed, there is typically only one maturity 
a month. We therefore consider the situation where $C(K,T)$ is given for all
strikes $K$ but for \emph{one} fixed maturity $T>0$. A natural question is then
whether there exists a time-homogeneous diffusion process
\[dX_t=\sigma(X_t)\,dW_t\]
such that \eqref{C} holds for all strikes. We have the following result.

\begin{theorem}
\label{main-finance}
Let $c:[0,\infty)\to[0,\infty)$ be a convex and non-increasing function satisfying
$c(0)=x_0$, $c(\infty)=0$ and $c(K)\geq (\xo-K)^+$,
and let $T>0$ be fixed. 
Then there exists a 
time-homogeneous (generalised) diffusion such that $X$ is a martingale with
$X_{0-}=x_0$, and such that $c(K)=\E(X_T-K)^+$.
\end{theorem}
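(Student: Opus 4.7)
The plan is to reduce \refT{main-finance} to \refT{cont} by first recovering a probability distribution $\mu$ on $[0,\infty)$ from the call price function $c$, then rescaling time so that the target is attained at time $T$ rather than at time $1$.

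\emph{Step 1 (reconstruct $\mu$ from $c$).} Since $c$ is convex and non-increasing on $[0,\infty)$, its right derivative $c'(K^+)$ exists everywhere, is non-decreasing, and satisfies $c'(\infty)=0$ (because $c(\infty)=0$ and $c$ is convex). The inequality $c(K)\ge (x_0-K)^+$ together with $c(0)=x_0$ forces $c'(0^+)\ge -1$. Therefore $F(K):=1+c'(K^+)$ is a right-continuous non-decreasing function on $[0,\infty)$ with $F(0)\in[0,1]$ and $F(\infty)=1$, hence the distribution function of a probability measure $\mu$ on $[0,\infty)$ (with possible atom $1+c'(0^+)$ at $0$). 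Integration by parts yields
\begin{equation*}
\int_{[0,\infty)}(x-K)^+\,\mu(dx)=\int_K^\infty(1-F(y))\,dy=-\int_K^\infty c'(y)\,dy=c(K),
\end{equation*}
and, setting $K=0$, $\bmu=c(0)=x_0$.

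\emph{Step 2 (apply \refT{cont}).} Apply \refT{cont} with this $\mu$ and starting point $x_0=\bmu$: there exists a generalised diffusion $X$ with some speed measure $\nu$ such that $X_{0-}=x_0$, $(X_t)_{0-\le t\le 1}$ is a martingale, and $X_1\sim\mu$. Since $\supp\mu\subseteq[0,\infty)$, the construction in the proof of \refT{cont} produces a $\nu$ with $\supp\nu\subseteq[\inf\supp\mu,\sup\supp\mu]\subseteq[0,\infty)$ (infinite point masses at the finite endpoints plus $\tnu$ in the interior), and \refL{Lsupp} yields $X_t\ge 0$ for every $t\ge 0-$.

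\emph{Step 3 (rescale time).} Define $\tilde X_t:=X_{t/T}$. Using the same Brownian motion $B$, observe that $T\gG_u=\int L_u^x\,(T\nu)(dx)$, so
\begin{equation*}
\inf\set{u:T\gG_u>t}=\inf\set{u:\gG_u>t/T}=A_{t/T},
\end{equation*}
which shows $\tilde X$ is itself a (time-homogeneous) generalised diffusion, with speed measure $T\nu$. The martingale property transfers under this deterministic time change with respect to the filtration $(\cF_{A_{t/T}})$, so $\tilde X$ is a martingale on $[0-,T]$ with $\tilde X_{0-}=x_0$, and
\begin{equation*}
\E(\tilde X_T-K)^+=\E(X_1-K)^+=\int(x-K)^+\mu(dx)=c(K),
\end{equation*}
as required.

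The only non-routine step is Step~1: verifying that the hypotheses on $c$ (precisely $c(K)\ge(x_0-K)^+$ forcing $c'(0^+)\ge -1$, and $c(\infty)=0$ forcing $c'(\infty)=0$) are exactly what make $F$ the CDF of a probability measure with mean $x_0$. After that \refT{cont} supplies the martingale generalised diffusion and the time change $t\mapsto t/T$, which scales the speed measure by the constant $T$, delivers the statement at the desired maturity.
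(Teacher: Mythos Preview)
Your proof is correct and follows essentially the same approach as the paper: recover $\mu$ from $c$ via the second derivative (you phrase this through the CDF $F(K)=1+c'(K^+)$, the paper directly takes $\mu=c''$ on $(0,\infty)$ plus an atom $1+c'(0)$ at $0$), check $\bmu=x_0$ by integration by parts, apply \refT{cont}, and rescale time. Your treatment of the time rescaling (speed measure $T\nu$) is more explicit than the paper's ``obvious scaling to general $T$'', but the argument is the same.
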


\begin{proof}
We construct a probability distribution $\mu$ on $[0,\infty)$ as follows.
On $(0,\infty)$, we let $\mu$ be defined as the measure given by the 
second derivative of the convex function $c$. Since $c(\infty)=0$, this
measure has mass $-c'(0)$, where $c'(0)$ is the (right) derivative 
of $c$ at 0. Moreover, we let a point mass $\ep=1+c'(0)$ be located at 0; 
note that the assumptions 
$c(0)=x_0$ and $c(K)\geq (\xo-K)^+\ge \xo-K$ imply that $c'(0)\ge-1$ so
$\ep\ge0$. 

Integration by parts shows that the expected value of 
the distribution $\mu$ is given by 
\[\int_0^\infty x\mu(dx) = 
\int_0^\infty \mu(x,\infty)\,dx = 
-\int_0^\infty c'(x)\,dx = c(0)=\xo.\]
Now, by Theorem~\ref{cont} (and an obvious scaling to general $T$)
there exists a time-homogeneous 
generalised diffusion $X$ with $X_{0-}=\xo$ such that $X$ is a martingale and
$X_T$ has distribution $\mu$. Finally, integration by parts gives
\[\E(X_T-K)^+=\int_K^\infty (x-K)\mu(dx)= 
\int_K^\infty \mu(x,\infty)\,dx = 
c(K),\]
thus finishing the proof.
\end{proof}

\section{Martingality of generalised diffusions}
\label{martingality}
The speed measure constructed in the proof of \refT{cont} is such that the 
generalized diffusion $(X_t)$ is a martingale, but as mentioned above, this is
not the case for every speed measure $\nu$.
We characterize here the speed measures for which $X_t$ is a (local)
martingale.

\begin{example}\label{Enotlocalm}
If the speed measure is
given by 
\[\nu(dx)=\left\{\begin{array}{ll}
dx & \mbox{for }x\geq 0\\
0 &  \mbox{for }x< 0,\end{array}\right.\]
then the corresponding process $X$ is a Brownian motion reflected at 0, and,
in particular, not a local martingale. 
\end{example}

This type of reflection at an extreme point of $\supp\nu$ is the
only thing 
that can prevent $X$ from being a local martingale. 
This is shown in the theorem below from
\cite{EH}, here somewhat extended. 

We first give a lemma, essentially saying that stopping $X$ at a point
$a\in\supp\nu$ is the same as stopping $B$ at $a$. (Note that we cannot stop
$X$ at $a\notin\supp\nu$ since $X_t\in\supp\nu$ for all $t\ge0$ by
\refL{Lsupp}.) 

\begin{lemma}
  \label{LM0} 
If $a\in(\xsoo_-,\xsoo_+)\cap\supp\nu$ and $\tau:=\inf\set{t:X_t=a}$, then
$\tau=\gG_{H_a}$. 
If\/ $H_a<\hoo$, then also $\tau<\infty$ and $A_\tau=H_a$, while
$\tau=\infty$ if $H_a>\hoo$.
Furthermore, $\P(H_a<\hoo)>0$ and
$\P(\tau<{t_0})>0$ for any $t_0>0$.
\end{lemma}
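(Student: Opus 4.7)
My plan is to identify $\tau$ directly with $\gG_{H_a}$ by unfolding the time-change formulas, and then to extract the two positive-probability claims from Brownian path properties (gambler's ruin, Ray--Knight).

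Set $t_*:=\gG_{H_a}$. On the event $\{H_a<\hoo\}$, \refL{LH} gives $t_*<\infty$; since $B_{H_a}=a\in\supp\nu$ and $H_a$ is a stopping time, \refL{LT} supplies a neighbourhood $U\ni a$ with $\nu(U)>0$ and some $\delta>0$ such that $\gG_{H_a+\eps}\ge\gG_{H_a}+\delta\nu(U)>t_*$ for every $\eps>0$. Combined with monotonicity of $\gG$, this forces $A_{t_*}=H_a$ via \eqref{at}; hence $X_{t_*}=a$ and $\tau\le t_*$. Conversely, if $X_s=a$ then $B_{A_s}=a$, and by path continuity $A_s\ge H_a$; the contrapositive of \eqref{rr<} then gives $s\ge t_*$, so $\tau\ge t_*$. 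If instead $H_a>\hoo$, then $\gG_u=\infty$ on $(\hoo,H_a]$ by \refL{LH}, so $A_t\le\hoo<H_a$ for every $t<\infty$; hence $B_{A_t}\ne a$ for any $t$ and $\tau=\infty=\gG_{H_a}$. The case $H_a=\hoo$ cannot occur, since $B_\hoo\in\suppoo\nu$ by path continuity and closedness of $\suppoo\nu$, whereas $a\in(\xsoo_-,\xsoo_+)$ is disjoint from $\suppoo\nu$ by the definition of $\xsoo_\pm$.

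For $\P(H_a<\hoo)>0$, the same disjointness yields $\hoo=H_{\set{\xsoo_-,\xsoo_+}}$; under the implicit assumption $\xsoo_-<\xo<\xsoo_+$, gambler's-ruin for Brownian motion on $[\xsoo_-,\xsoo_+]$ (or the triviality when an endpoint is infinite) gives $\P(H_a<H_{\set{\xsoo_-,\xsoo_+}})>0$.

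For $\P(\tau<t_0)>0$, assume $\xo<a$ for concreteness, fix a compact interval $[c,a']\subset(\xsoo_-,\xsoo_+)$ with $c<\xo$ and $a<a'$, and note $M:=\nu[c,a']<\infty$. On the positive-probability event $\{H_a<H_c\}$ the Brownian motion stays in $[c,a]$ up to $H_a$, so $L^x_{H_a}$ is supported in $[c,a]$ and $\gG_{H_a}\le M\sup_x L^x_{H_a}$. The main obstacle, as I see it, is to show $\P(\sup_x L^x_{H_a}<t_0/M,\,H_a<H_c)>0$. For this I would invoke the first Ray--Knight theorem, which represents $(L^{a-r}_{H_a})_{0\le r\le a-\xo}$ as the squared norm of a planar Brownian motion started at the origin, continued as a $\mathrm{BesQ}^0$ process on $[a-\xo,\infty)$: the planar BM stays in an arbitrarily small disk on $[0,a-\xo]$ with positive probability, the $\mathrm{BesQ}^0$ continuation is a nonnegative local martingale whose supremum is controllable by Doob's maximal inequality, and the event $\{H_a<H_c\}$ corresponds to this continuation hitting $0$ before $r=\xo-c$. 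The naive occupation-time identity $\int L^x_{H_a}\,dx=H_a$ does not suffice here because $\nu$ may be singular, which is why one needs the Ray--Knight control on $\sup_x L^x_{H_a}$ rather than on its $\nu$-average.
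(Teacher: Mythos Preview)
Your argument is correct and follows essentially the same route as the paper: identify $\tau$ with $\gG_{H_a}$ via \refL{LT} and \eqref{rr<}, handle $H_a>\hoo$ via \refL{LH}, and for $\P(\tau<t_0)>0$ bound $\gG_{H_a}$ by $\nu[\text{interval}]\cdot\sup_x L^x_{H_a}$ on the event that $B$ hits $a$ before the far barrier. The only difference is cosmetic: where the paper simply asserts ``it is easy to see (e.g.\ using excursion theory)'' that $\sup_x L^x_{H_a}$ is small and $H_a<H_b$ with positive probability, you spell this out via the first Ray--Knight theorem, which is a perfectly valid (and in fact more explicit) justification of the same fact.
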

\begin{proof}  
  If $H_a<\hoo$, then $\gG_{H_a}<\infty$ by \refL{LH} and thus \refL{LT} 
and the assumption $a\in\supp\nu$ imply
  that $\gG_{H_a+\eps}>\gG_{H_a}$ for every $\eps>0$. Thus, if
  $t=\gG_{H_a}$, then $A_t=H_a$ and $X_t=B_{A_t}=B_{H_a}=a$.
On the other hand, if $t<\gG_{H_a}$, then $A_t<H_a$ by \eqref{rr<} so
$X_t=B_{A_t}\neq a$. Hence, $\tau=\gG_{H_a}$.

If $H_a>\hoo$, then \refL{LH} yields $\gG_{H_a}=\infty$.
\refL{LH} yields further, for any $t<\infty$, $A_t\le\hoo<H_a$ so
$X_t=B_{A_t}\neq a$. Hence, $\tau=\infty=\gG_{H_a}$.

For the final statement, assume that $a\le \xo$, say. Note that the
assumption implies $\xo\notin\suppoo\nu$, so $\xsoo_-<\xo<\xsoo_+$.
Choose $b\in(\xo,\xsoo_+) $. Then $\nu[a,b]<\infty$. Let
$\eps:=t_0/\nu[a,b]$.
(The case $\nu[a,b]=0$ is easy.)
It is easy to see (e.g.\ using excursion theory)
that with positive probability both $\sup_x L^x_{H_a}<\eps$ and
$H_a<H_b$. Thus $B_s\in[a,b)$ for all $s\le H_a$, 
and then $\tau=\gG_{H_a}=\int_a^b L^x_{H_a}\nu(dx)<\eps\nu[a,b]=t_0$.

The claim $\P(H_a<\hoo)>0$ follows from this, using
\refL{LH}, but it is easier to see it directly
 since $\hoo=H_{\set{\xsoo_-,\xsoo_+}}$.
\end{proof}

\begin{theorem}
\label{Tekstrom-hobson}
$(X_t)_{t\ge0-}$ is a local martingale if and only if
either $\supp\nu=\set{x_0}$ (then, trivially, $X_t=\xo$ for all $t$)
or both the following conditions hold:
\begin{romenumerate}
\item\label{eh-}
$\suppoo\nu\cap(-\infty,\xo]\neq\emptyset$ 
or $\supp\nu\cap(-\infty,b]\neq\emptyset$ for all $b<\xo$,
\item\label{eh+} 
$\suppoo\nu\cap[\xo,\infty)\neq\emptyset$ 
or
$\supp\nu\cap[b,\infty)\neq\emptyset$ for all $b>\xo$.
\end{romenumerate}

Moreover, 
if $(X_t)_{0-\le t\le t_0}$ is a local martingale for some $t_0>0$, then
$(X_t)_{t\ge0-}$ is a local martingale.
\end{theorem}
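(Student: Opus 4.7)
My plan is to prove the two directions of the equivalence separately, and to deduce the \textbf{moreover} clause from the fact that the characterization depends only on $(\nu, \xo)$. The trivial case $\supp\nu = \{\xo\}$ gives $X_t \equiv \xo$ by \refL{Lsupp}, so I restrict attention to $\supp\nu \neq \{\xo\}$.

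For the \textbf{sufficient direction}, assume (i) and (ii). From (i), either $\suppoo\nu \cap (-\infty, \xo]$ contains a point (take $a_n \equiv \xsoo_-$) or I pick $a_n \in \supp\nu$ with $a_n \downarrow -\infty$; define $c_n$ analogously from (ii). Let $\nu_n$ be obtained from $\nu$ by adjoining infinite point masses at $a_n$ and $c_n$ (omitted when already in $\suppoo\nu$). A direct calculation gives $\gG^n_u = \gG_u$ for $u < H_{\{a_n, c_n\}}$ and $\gG^n_u = +\infty$ afterwards, so the generalised diffusion with speed measure $\nu_n$ is precisely $X$ stopped at $\tau_n := \inf\{t : X_t \in \{a_n, c_n\}\} = \gG_{H_{\{a_n, c_n\}}}$. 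Since $\suppoo\nu_n$ meets both $(-\infty, \xo]$ and $[\xo, \infty)$, \refL{LA} applies to $\nu_n$ and $X^{\tau_n}$ is a bounded martingale. Finally $\tau_n = \gG_{H_{\{a_n, c_n\}}} \uparrow \gG_\infty = +\infty$ as $a_n \downarrow -\infty$ and $c_n \uparrow +\infty$, so $X$ is a local martingale.

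For the \textbf{necessary direction}, assume $X$ is a local martingale and suppose for contradiction that (i) fails (the case where (ii) fails is symmetric). Then $a := \inf\supp\nu$ is finite, $a \in \supp\nu$, and $\suppoo\nu \subset (\xo, \infty)$. If $a > \xo$, then by \refL{L0}, $X_0 = a > \xo = X_{0-}$ a.s., contradicting $\E X_0 = X_{0-}$. If $a \le \xo$ but $\sup\supp\nu \le \xo$, then $X$ is bounded in $[a, \xo]$; a bounded local martingale is a martingale, and $\E X_t = \xo$ combined with $X_t \le \xo$ forces $X_t \equiv \xo$, contradicting \refL{LM0}. The main subcase is $a \le \xo < \sup\supp\nu$. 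Here I apply Tanaka's formula for $B$,
\begin{equation*}
(B_u - a)^+ = (\xo - a) + \int_0^u \mathbf{1}_{\{B_s > a\}}\, dB_s + \tfrac12 L^a_u,
\end{equation*}
and evaluate at $u = A_t$; since $X_t = B_{A_t} \in \supp\nu \subset [a, \infty)$, the positive part equals $X_t - a$, yielding
\begin{equation*}
X_t - \xo = N_t + \tfrac12 L^a_{A_t}, \qquad N_t := \int_0^{A_t} \mathbf{1}_{\{B_s > a\}}\, dB_s.
\end{equation*}
The integrand is bounded by $1$, so the integral is a continuous $(\cF_u)$-local martingale with quadratic variation dominated by $u$; time-changing along the increasing family of $(\cF_u)$-stopping times $A_t$ and localizing via $\sigma_k := \inf\{t : A_t \ge k\}$ shows that $N$ is a $(\cG_t)$-local martingale. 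If $X$ were a local martingale, then so would $L^a_{A_t}$, but being non-decreasing and starting at $0$ it would have to vanish identically. However, by \refL{LM0} the hitting time $\tau_a = \gG_{H_a}$ satisfies $\P(\tau_a < 1) > 0$, whence $A_1 > H_a$ with positive probability, and then $L^a_{A_1} > 0$ by \refL{LT} --- a contradiction.

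The \textbf{moreover} statement follows because the contradictions in the necessity argument all occur on arbitrarily small time intervals: \refL{LM0} produces the event $\{\tau_a < t_0\}$ of positive probability for any $t_0 > 0$, and the Tanaka-based contradiction then applies to $(X_t)_{0- \le t \le t_0}$. Thus if $X$ is a local martingale on any interval $[0-, t_0]$, conditions (i) and (ii) must hold, and the sufficient direction upgrades this to local-martingality on $[0-, \infty)$. The main technical obstacle in the whole proof is the time-change step: one must verify that the time-changed stochastic integral $N_t$ is genuinely a $(\cG_t)$-local martingale, which hinges on the family $(A_t)$ being increasing $(\cF_u)$-stopping times and on localization by the bounds $\sigma_k$.
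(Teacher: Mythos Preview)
Your sufficiency argument is essentially the paper's, but note a loose end: when $a_n\equiv\xsoo_-$ (or $c_n\equiv\xsoo_+$) the hitting time $H_{\{a_n,c_n\}}$ need not tend to $\infty$, so ``$\gG_{H_{\{a_n,c_n\}}}\uparrow\gG_\infty=+\infty$'' is not the right justification. The paper takes $T_n=\gG_{H_n+}$ instead and observes that if $B_{H_n}\in\suppoo\nu$ then $T_n=+\infty$ directly by \refL{LH}.

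The necessity argument has a genuine gap: the assertion that $N_t=\int_0^{A_t}\mathbf1_{\{B_s>a\}}\,dB_s$ is a $(\cG_t)$-local martingale is unjustified and in fact false. Time-changing a continuous $(\cF_u)$-local martingale along the stopping times $(A_t)$ does \emph{not} in general yield a $(\cG_t)$-local martingale; indeed, with integrand $\equiv1$ this would give that $X_t-\xo=\int_0^{A_t}dB_s$ is always a local martingale, which is precisely what the theorem denies. The standard time-change result (Revuz--Yor, Prop.~V.(1.5)) requires the integrand martingale to be constant on each jump interval $[A_{t-},A_t]$; your $M$ fails this on excursions of $B$ into gaps of $\supp\nu$ above $a$. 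Your localization by $\sigma_k=\inf\{t:A_t\ge k\}$ does not rescue this, since $A_{\sigma_k}$ can overshoot $k$ when $A$ jumps, so $A_{t\wedge\sigma_k}$ is not bounded and optional stopping is unavailable. Concretely, take $\nu=\gd_{-1}+\gd_1$, $\xo=0$, $a=-1$: if your claim held, then $X_t=N_t+\tfrac12 L^{-1}_{A_t}$ would be a local submartingale, and the symmetric decomposition at the upper endpoint would make $X$ a local supermartingale, hence a bounded local martingale, hence a martingale; but the resulting two-state chain is easily seen not to be a martingale.

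The paper avoids time-changed stochastic integrals altogether. It argues that if $X$ were a local martingale then $X_t-\xi_-\ge0$ would be a nonnegative local martingale, hence a supermartingale, for which $0$ is an absorbing value; it then shows (using \refL{LM0} and a short argument that $X$ subsequently leaves $\xi_-$, with the memoryless structure of the sojourn at $\xi_-$ handling the ``moreover'' clause) that with positive probability $X$ hits $\xi_-$ and later departs, a contradiction.
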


\begin{proof}
If $\supp{\nu}=\set{\xo}$, then $X_t=\xo$ a.s.\ for all $t$ by \refL{Lsupp}.

Suppose now that \ref{eh-} and \ref{eh+} hold.
If $\suppoo\nu\cap(-\infty,\xo]\neq\emptyset$, let 
$a_n^-:=\xsoo_-=\sup\set{x\le \xo:x\in \suppoo\nu}$ for each $n$; otherwise let
$a_n^-$ be a sequence of points in 
$\supp\nu\cap(-\infty,\xo)$ with $a_n^-\downto-\infty$ as \ntoo.
Define a sequence $a_n^+\ge\xo$ similarly, using \ref{eh+}.

Let $H_n:=H_{\set{a_n^-,a_n^+}}$. For each $n$, 
$(B_{t\wedge H_n})_{t\ge0}$ is a bounded martingale. Thus, if $s<t$, then 
$\E(B_{A_t\wedge H_n}\mid\cF_{A_s})=B_{A_s\wedge H_n}$, so 
$(B_{A_t\wedge H_n})_{t\ge0-}$ is a martingale
for the filtration $(\cG_t)=(\cF_{A_t})$.  

Set $T_n:=\gG_{H_n+}=\inf\set{\gG_u:u>H_n}$.
$T_n$ is a stopping time for the filtration $(\cG_{t})$.
Further, since $H_n\le H_{n+1}$ we have $T_n\le T_{n+1}$.

If $T_n<\infty$, then $\gG_{H_n}=T_n<\infty$ and it follows from \refL{LT} that
a.s.\ then $\gG_u>T_n$ for all $u>H_n$, so $A_{T_n}=H_n$;
hence, 
$B_{A_t\wedge H_n}=B_{A_t\wedge A_{T_n}}=B_{A_{t\wedge T_n}}=X_{t\wedge T_n}$.
On the other hand, it $T_n=\infty$, then $A_t\le H_n$ for every $t$, and thus
$B_{A_t\wedge H_n}=B_{A_t}=X_t=X_{t\wedge T_n}$.
In any case, thus
$B_{A_t\wedge H_n}=X_{t\wedge T_n}$ a.s., so 
$(X_{t\wedge T_n})_{t\ge0-}$ is a martingale.

Finally, we verify that $T_n\to\infty$ a.s.\ as \ntoo. 
Recall that $B_{H_n}\in\set{a_n^-,a_n^+}$; we consider for definiteness the
case when $B_{H_n}=a_n^-$, so $H_n=H_{a_n^-}$. By construction, either
$a_n^-\in\suppoo\nu$, and then \refL{LH} implies that
$\gG_{H_{a_n^-}+}=\infty$ a.s., or else $a_n^-\to -\infty$ and then
$H_{a_n^-}\to\infty$ and thus
$\gG_{H_{a_n^-}}\to\infty$ a.s.
In both cases we obtain $T_n=\gG_{H_{a_n^-}+}\to\infty$ a.s.
This completes the proof that $(X_t)_{t\ge0-}$ is a local martingale.

For the converse, we assume that $\supp\nu\neq\set{\xo}$.
Assume that \ref{eh-} fails. (The case when \ref{eh+} fails is symmetric.)
Let $\xx_-:=\inf\set{x:x\in\supp\nu}$. Since \ref{eh-} fails, $\xx_->-\infty$.
We may assume that $\xx_-\le \xo$, since otherwise
\refL{L00} shows that $X_t$ is not a local martingale at $0-$.

Let $T_1=H_{\xx_-}$, the hitting time for $B_u$ of $\xx_-$.
Since \ref{eh-} fails, $[\xx_-,\xo]\cap\suppoo\nu=\emptyset$
so $\xi_-\in(\xsoo_-,\xsoo_+)$ and \refL{LM0} shows that  
with positive probability $T_1=H_{\xx_-}<\hoo$, i.e.,
$B_u$ hits $\xx_-$ before it hits $\suppoo\nu$;
denote this event by $\cE$. 
\refL{LM0} further shows that   
if $\cE$ holds, then there exists $s<\infty$ (viz.\ $s=\gG_{T_1}$)
such that
$X_s=\xx_-$. We want to show that in this case there also exists $t>s$ with
$X_t\neq \xx_-$. 

If $\supp\nu=\set{\xx_-}$, then $\xx_-<\xo$ by our assumption
$\supp\nu\neq\set\xo$, but then $X_t$ is not a local martingale at $0-$ by
\refL{L00}. 
We may thus assume that
there exists another point $x_2\neq \xx_-$ in $\supp\nu$.
Let $T_2$ be the first hitting time of $x_2$ after $T_1$.
We study two cases separately.

If $\gG_{T_2}<\infty$, then 
\refL{LT} implies that a.s.\ 
$\gG_{T_2+\eps}>\gG_{T_2}$ for all $\eps>0$. In this case, if $t=\gG_{T_2}$,
then $A_t=T_2$ and $X_t=B_{T_2}=x_2\neq \xx_-$.

If $\gG_{T_2}=\infty$, then $\aoo=\inf\set{u:\gG_u=\infty}$
is finite.
If $U$ is a neighbourhood of $B_\aoo$, we can find $\eps>0$ such that
$B_u\in U$ for $u\in(\aoo-\eps,\aoo+\eps)$, and thus 
$L_{\aoo+\eps}^x=L_{\aoo-\eps}^x$ for $x\notin U$. Since
$\gG_{\aoo+\eps}-\gG_{\aoo-\eps}=\infty$, it follows that $\nu(U)=\infty$.
Hence $B_\aoo\in\suppoo\nu$, and in particular $B_\aoo\neq \xx_-$. As
$t\to\infty$, $A_t\to\aoo$ and thus $X_t=B_{A_t}\to B_\aoo\neq \xx_-$, so 
in this case,
$X_t\neq \xx_-$ for all large $t$.

Note also that $X_t\ge \xx_-$ for all $t\ge0$ by \refL{Lsupp}. 
If $(X_t)_{t\ge0-}$ is a local martingale, then $Y$ given by $Y_t=X_t-\xx_-$ is thus a 
non-negative local martingale, hence a supermartingale. Therefore zero is absorbing for
$Y$, see e.g.\ \cite[Proposition (3.4)]{RY}.
But this contradicts our
result above which says that with positive probability $Y_t$ first hits zero
and then takes a larger value. 
Hence $X$ is not a local martingale.

For the final statement, it suffices to show that $\P(S_\gd\le t_0)>0$ for 
some $\gd>0$ since the argument above then shows that $(X_t)_{t\le t_0}$ is
not a local martingale.
With $\tau_1=\inf\set{t\ge0:X_t=\xi_-}=\gG_{T_1}$ and 
$\tau_2=\inf\set{t>\tau_1:X_t\neq\xi_-}$ it thus suffices to show
$\P(\tau_2<t_0)>0$. 

By \refL{LM0}, $\P(\tau_1\le t_0/2)>0$. 
Let $\gD:=\tau_2-\tau_1$ and condition on $\cE$. Then $\gD<\infty$ a.s.\ and
the strong Markov property of $B$ implies that $\gD$ is independent of $\tau_1$
and, moreover, that $\P(\gD>a+b\mid\gD>a)=\P(\gD>b)$ for any $a,b>0$ with
$\P(\gD>a)>0$; thus  (conditioned on $\cE$)
either $\gD=0$ a.s.\ or $\gD$ has an exponential distribution;
in both cases  $\P(\gD<t_0/2)>0$.
Hence, $\P(\tau_2<t_0)>0$ which completes the proof.
\end{proof}

Before giving the corresponding characterization for martingales, we give 
some lemmas.

\begin{lemma}
  \label{Lgat}
If $A_t<\hoo$, then $\gG_{A_t}=t$.
Consequently, for all $t\ge0$, 
$\gG_{A_t}=t\wedge\gG_{\hoo}$.
\end{lemma}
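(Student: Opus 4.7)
The plan is to use \refRR{RRdef}\ref{RRgGcont} (left-continuity of $\gG$ together with continuity wherever $\gG_{u+}<\infty$) and Remark~\ref{RRdef}\ref{RR<} (which yields $\gG_{A_t}\le t\le \gG_{A_t+}$), combined with \refL{LH} (which gives finiteness of $\gG_u$ for $u<\hoo$ and $A_t\le\hoo$ always).

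First I would prove the first assertion. Assume $A_t<\hoo$. Pick any $u$ with $A_t<u<\hoo$. By \refL{LH}, $\gG_u<\infty$, and hence $\gG_{A_t+}\le\gG_u<\infty$. By \refRR{RRdef}\ref{RRgGcont}, $\gG$ is continuous at $A_t$, and the sandwich $\gG_{A_t}\le t\le\gG_{A_t+}$ from \refRR{RRdef}\ref{RR<} collapses to $\gG_{A_t}=t$, as desired.

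Next, for the consequence, recall from \refL{LH} that $A_t\le\hoo$ for all $t\ge0$, so there are only two cases to consider. If $A_t<\hoo$, then by the first part $\gG_{A_t}=t$, and monotonicity of $\gG$ yields $t=\gG_{A_t}\le\gG_{\hoo}$, so $t\wedge\gG_{\hoo}=t=\gG_{A_t}$. If instead $A_t=\hoo$, then by \eqref{rr<} applied in the form $u\le A_t\iff t\ge\gG_u$, every $u<\hoo$ satisfies $\gG_u\le t$; letting $u\upto\hoo$ and using left-continuity of $\gG$ (\refRR{RRdef}\ref{RRgGcont}) gives $\gG_{\hoo}\le t$. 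Hence $t\wedge\gG_{\hoo}=\gG_{\hoo}=\gG_{A_t}$.

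These two cases together give the identity $\gG_{A_t}=t\wedge\gG_{\hoo}$ for all $t\ge0$. The only point that requires care is ensuring that the left-continuity of $\gG$ applies at $u=\hoo$ — which it does unconditionally by \refRR{RRdef}\ref{RRgGcont} — and that no issue arises from the possibility $\gG_{\hoo}=\infty$, in which case the assertion $A_t<\hoo$ is automatic whenever $t<\infty$ (since $A_t=\hoo$ would force $\gG_{\hoo}\le t<\infty$), and the formula reduces to the first part. I expect no real obstacle beyond bookkeeping the cases $A_t<\hoo$ versus $A_t=\hoo$.
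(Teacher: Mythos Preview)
Your proof is correct and follows essentially the same approach as the paper's: both use \refL{LH} to get $\gG_{A_t+}<\infty$ when $A_t<\hoo$, then invoke \refRR{RRdef}\ref{RR<} to conclude $\gG_{A_t}=t$, and handle the remaining case $A_t=\hoo$ by showing $\gG_{\hoo}\le t$. The only cosmetic difference is that in the case $A_t=\hoo$ the paper cites the inequality $\gG_{A_t}\le t$ directly from \refRR{RRdef}\ref{RR<}, whereas you re-derive it via left-continuity; both are fine.
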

\begin{proof}
  If $A_t<\hoo$, then for small $\eps>0$ we have $A_t+\eps<\hoo$ and thus 
$\gG_{A_t+\eps}<\infty$ by \refL{LH}, which by \refR{RRdef}\ref{RR<} yields
$\gG_{A_t}=t$. Furthermore, in this case $t=\gG_{A_t}\le\gG_{\hoo}$ so 
$\gG_{A_t}=t=t\wedge\gG_{\hoo}$.

Since $A_t\le\hoo$ by \refL{LH}, the only remaining case is $A_t=\hoo$.
In this case $\gG_{\hoo}=\gG_{A_t}\le t$ so  
$t\wedge\gG_{\hoo}=\gG_{\hoo}=\gG_{A_t}$.
\end{proof}

\begin{lemma}
  \label{LM1}
For every measurable function $f\ge0$, a.s.\ for every $u\le\hoo$,
\begin{equation}\label{lm1}
  \int_0^u f(B_s)\,d\gG_s
= \intoooo f(x)L_u^x\,\nu(dx).
\end{equation}
\end{lemma}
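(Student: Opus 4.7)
The plan is to apply Fubini--Tonelli to exchange two integrations, exploiting the standard property that for each fixed $a\in\R$, the Lebesgue--Stieltjes measure $dL_s^a$ in $s$ is carried by the random level set $\{s:B_s=a\}$ (Brownian local time at $a$ increases only when $B$ visits $a$).

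I would first reduce to the case $u<\hoo$, where $\gG_u<\infty$ by \refL{LH}. The case $u=\hoo$ then follows by taking $u_n<\hoo$ with $u_n\upto\hoo$ and applying monotone convergence to both sides of \eqref{lm1}: on the left, the Stieltjes integrals are non-decreasing in the upper limit and their limit coincides with $\int_0^{\hoo} f(B_s)\,d\gG_s$ by left-continuity of $\gG$ at $\hoo$ (\refRR{RRdef}\ref{RRgGcont}); on the right, monotonicity of $u\mapsto L_u^a$ and monotone convergence under $\nu$ give the corresponding limit.

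Under $\gG_u<\infty$, by a standard monotone class argument it suffices to verify \eqref{lm1} for $f=\boldsymbol 1_E$ with $E\subseteq\R$ Borel. Introduce the non-negative (random) measure $\rho$ on $[0,u]\times\R$ defined by
\[
  \rho(A\times F):=\int_F\Bigpar{\int_A dL_s^a}\,\nu(da)
\]
for Borel $A\subseteq[0,u]$ and $F\subseteq\R$. Its first-coordinate marginal is the Lebesgue--Stieltjes measure $d\gG_s$ on $[0,u]$, since $\rho([0,s]\times\R)=\int_\R L_s^a\,\nu(da)=\gG_s$. The key support property yields, for each $a\in\R$,
\[
  \int_0^u \boldsymbol 1_E(B_s)\,dL_s^a = \boldsymbol 1_E(a)\,L_u^a,
\]
since $\boldsymbol 1_E(B_s)=\boldsymbol 1_E(a)$ on the support of $dL_s^a$. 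Then Fubini--Tonelli applied to $\rho$ gives
\[
  \int_0^u \boldsymbol 1_E(B_s)\,d\gG_s = \int_\R\int_0^u \boldsymbol 1_E(B_s)\,dL_s^a\,\nu(da) = \int_\R \boldsymbol 1_E(a)\,L_u^a\,\nu(da) = \int_E L_u^a\,\nu(da),
\]
which is precisely \eqref{lm1} for $f=\boldsymbol 1_E$.

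The main technical obstacle is justifying the Fubini step: identifying the first marginal of $\rho$ with $d\gG_s$ and securing $\sigma$-finiteness. Joint measurability of $(s,a)\mapsto L_s^a$ is standard (a.s.\ joint continuity), $\sigma$-finiteness of $\rho$ on $[0,u]\times\R$ follows from $\gG_u<\infty$, and the marginal identity reduces to sets of the form $[0,s]\times\R$, where it is immediate from the definition of $\gG$. Extending from a countable dense set of $u$'s to all $u\le\hoo$ simultaneously on one a.s.\ event poses no difficulty since both sides of \eqref{lm1} are monotone in $u$ and sufficiently regular.
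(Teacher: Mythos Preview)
Your argument is correct and gives a self-contained proof. The paper takes a different, more black-box route: after the same reduction to $u<\hoo$ (where $\gG_u<\infty$), it further reduces to locally finite $\nu$ by modifying $\nu$ outside the compact range $\{B_s:0\le s\le u\}$, and then invokes the general theory of continuous additive functionals, citing \cite[Corollary X.(2.13)]{RY}. (It also mentions, without details, an alternative direct proof via continuous $f$ and Riemann-sum partitions of $[0,u]$.) Your approach avoids both the CAF machinery and the reduction to locally finite $\nu$: you work directly with the product measure $\rho(ds,da)=dL_s^a\,\nu(da)$, identify its first marginal as $d\gG_s$, and use that $dL_s^a$ is carried by $\{s:B_s=a\}$ (which holds a.s.\ simultaneously for all $a$ by joint continuity of $(s,a)\mapsto L_s^a$ and continuity of $B$). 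This is more elementary and arguably cleaner; the CAF citation buys brevity but hides exactly the Fubini/support-of-local-time computation you carry out. Your handling of the simultaneous-in-$u$ almost-sure statement and the $u=\hoo$ limit via monotone convergence matches the paper's.
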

\begin{proof}
  By monotone convergence it suffices to consider $u<\hoo$, and then, by
  modifying $\nu$ outside the range of $\set{B_s:s\le u}$, it suffices to
  consider locally finite $\nu$. In this case, the result is a consequence
  of the general theory of continuous additive functionals, see
  \cite[Corollary X.(2.13)]{RY}. (It is also easy to make a direct proof in
  this case, by first assuming that $f$ is continuous, and then partitioning
  the interval $[0,u]$ into $N$ subintervals, and for each subinterval
  estimating the change of the difference between the two sides in
  \eqref{lm1}; we omit the details.)
\end{proof}

\begin{lemma}
  \label{LM2}
If $f\ge0$ is a deterministic or random measurable function, then a.s.\ for
every $T\le\hoo$,
\begin{equation}\label{lm2}
\int_0^Tf(s)\,d\gG_s = \int_0^{\gG_T} f(A_t)\,dt.  
\end{equation}
\end{lemma}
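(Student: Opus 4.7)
The plan is to read \eqref{lm2} as a deterministic change-of-variables identity, treated pointwise in $\omega$. First I would fix an $\omega$ in the full-measure event on which $\gG_u$ is continuous on $[0,\hoo)$ (see \refRR{RRdef}\ref{RRgGcont}) and $A_t$ is its right-continuous inverse in the sense of \eqref{at}. For such $\omega$ and a deterministic $T\le\hoo$, I would introduce two finite Borel measures on $[0,\infty)$:
\begin{equation*}
\mu_1(E):=\int_0^T\mathbf{1}_E(s)\,d\gG_s
\quad\text{and}\quad
\mu_2(E):=\int_0^{\gG_T}\mathbf{1}_E(A_t)\,dt,
\end{equation*}
each of total mass $\gG_T$. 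The goal becomes $\mu_1=\mu_2$; once this is established, \eqref{lm2} follows for indicator $f$, then for simple $f$ by linearity, then for nonnegative measurable deterministic $f$ by monotone convergence, and finally for random $f$ by applying the deterministic identity $\omega$-wise.

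To prove $\mu_1=\mu_2$, I would test on the $\pi$-system $\set{[0,u]:u\ge 0}$. The left side is immediate: $\mu_1([0,u])=\gG_{T\wedge u}$. For the right side, the key input is the duality $u\le A_t\iff t\ge\gG_u$ from \refRR{RRdef}\ref{RR<}. Taking complements gives $\set{t\ge0:A_t\le u}=\set{t\ge0:t\le\gG_{u+}}$, and for $u<\hoo$ the continuity of $\gG$ on $[0,\hoo)$ forces $\gG_{u+}=\gG_u$. Intersecting with $[0,\gG_T]$ shows that $\mu_2([0,u])$ is the Lebesgue measure of $[0,\gG_T\wedge\gG_u]=[0,\gG_{T\wedge u}]$, which equals $\gG_{T\wedge u}$. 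Hence $\mu_1$ and $\mu_2$ agree on the generating $\pi$-system and therefore as Borel measures by the Dynkin $\pi$-$\lambda$ theorem.

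The main subtlety I expect is the handling of the constancy intervals of $\gG$: on such intervals $A\circ\gG$ is not the identity, so a naive substitution $t=\gG_s$ fails and the argument has to be couched measure-theoretically rather than pathwise. The hypothesis $T\le\hoo$ is exactly what keeps $\gG$ continuous on $[0,T]$ (or, at worst, permits an infinite jump only at the endpoint $T=\hoo$, which is harmless because in that case $\set{t\in[0,\gG_T]:A_t\le \hoo}$ is all of $[0,\gG_T]$); beyond $\hoo$ the measure $d\gG$ can place infinite mass at a single point and the identity breaks down. The borderline case $T=\hoo$ with $\gG_T=\infty$ is then dispatched by a short monotone-convergence argument along a sequence $T_n\upto T$ with $T_n<\hoo$ and $\gG_{T_n}\upto\gG_T$.
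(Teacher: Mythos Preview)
Your proof is correct and follows essentially the same strategy as the paper: fix $\omega$, regard both sides of \eqref{lm2} as measures in $f$, and verify equality on indicators of intervals using the duality between $\Gamma$ and $A$ from \refRR{RRdef}\ref{RR<}. The paper's version is marginally cleaner because it tests on half-open intervals $[0,u)$ and invokes the equivalence $A_t<u\iff t<\Gamma_u$ directly, which sidesteps the detour through $\Gamma_{u+}$ and the appeal to continuity of $\Gamma$.
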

\begin{proof}
Consider a fixed $\go$ in our probability space. By a monotone class
argument (or by seeing the two sides of \eqref{lm2} as $\int f\,d\mu_L$
and $\int f\,d\mu_R$  for two finite measures $\mu_L,\mu_R$ on $\ooo$),
it suffices to prove \eqref{lm2} when $f$ is the indicator of an interval
$[0,u)$ for some $u>0$. In this case
$\int_0^Tf(s)\,d\gG_s = \int_0^{T\wedge u}d\gG_s = \gG_{T\wedge u}$
and, by \refRR{RRdef}\ref{RR<},
\begin{equation*}
\int_0^{\gG_T} f(A_t)\,dt
=
\int_0^{\gG_T} \ett{t<\gG_u}\,dt
=\gG_T\wedge \gG_u = \gG_{T\wedge u}.
\qedhere
\end{equation*}
\end{proof}

If $(X_t)_{t\ge0-}$ is a martingale, then $\E X_{t\wedge\tau}=\xo$ for every
$X$-stopping time $\tau$ and every $t\ge0$. This means that $\E B_T=\xo$ for
the $B$-stopping time $T=A_t\wedge A_\tau$. We would like to have $\E
B_T=\xo$ also for other $B$-stopping times $T\le A_t$, not necessarily
obtained by stopping $X$. However, this is not always possible, as is seen
by the following example.
\begin{example}\label{Exo}
  If $\supp\nu=\set{\xo}$, then $X_t=\xo$ for all $t\ge0-$, so $X_t$ is
  trivially a martingale. However, if further $\nu\set{\xo}<\infty$, for
  example if $\nu=\gd_{\xo}$, then for any $t>0$ and $a\neq \xo$,
  $\P(B_{A_t\wedge H_a}=a)=\P(H_a<A_t)>0$; since 
$B_{A_t\wedge H_a}\in\set{a,\xo}$, this implies $\E B_{A_t\wedge H_a}\neq\xo$.
\end{example}

The next lemma shows that \refE{Exo} is the only counterexample.
(The trivial example shows that the lemma is not as trivial as it might look.)

\begin{lemma}
  \label{LM3}
Suppose that $\supp\nu\neq\set{\xo}$ and that $(X_t)_{t\le t_0}$ is a
martingale for some $t_0>0$.
Then, for every stopping time $T\le A_{t_0}$ and every real $a$, 
$\E B_{T\wedge H_a}=\xo$.
\end{lemma}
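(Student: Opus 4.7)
The plan is to reduce the claim to optional stopping for the martingale $X$ on $[0-,t_0]$ via a suitably chosen $\cG$-stopping time, where $\cG_s=\cF_{A_s}$. Since $(X_t)_{0-\le t\le t_0}$ is a martingale closed by $X_{t_0}$, it is uniformly integrable, so $\E X_\sigma=\xo$ for every $\cG$-stopping time $\sigma\le t_0$. I take $\sigma:=\gG_{T\wedge H_a}$: using $\{A_s\ge u\}=\{\gG_u\le s\}$ from \refRR{RRdef}\ref{RR<}, together with the fact that $T\wedge H_a$ is an $(\cF_u)$-stopping time, one checks $\{\sigma\le s\}=\{T\wedge H_a\le A_s\}\in\cF_{A_s}=\cG_s$; since $T\le A_{t_0}$, \refL{Lgat} gives $\sigma\le\gG_{A_{t_0}}\le t_0$. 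Hence $\E X_\sigma=\E X_{0-}=\xo$.

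Next I identify $X_\sigma=B_{A_\sigma}$ more explicitly. Set $T':=\inf\{u\ge T\wedge H_a:B_u\in\supp\nu\}$; I claim $A_\sigma=T'$. Indeed, for $u\in[T\wedge H_a,T')$ the path $B$ lies strictly in an open component of $\supp\nu^c$, so the local times $L^x$ for $x\in\supp\nu$ do not grow and $\gG_u\equiv\sigma$ on $[T\wedge H_a,T']$; while for $u>T'$, \refL{LT} applied at $T'$ together with $B_{T'}\in\supp\nu$ forces $\gG_u>\sigma$. Hence $X_\sigma=B_{T'}$.

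It remains to show $\E B_{T'}=\E B_{T\wedge H_a}$. On $\{B_{T\wedge H_a}\in\supp\nu\}$ we have $T'=T\wedge H_a$ and the two values coincide. On $\{B_{T\wedge H_a}\notin\supp\nu\}$, $B_{T\wedge H_a}$ lies in some open component $(c_1,c_2)$ of $\supp\nu^c$ and $T'$ is the first exit of $B$ from this component. The critical point is that such an accessible component must be \emph{bounded}: since $\supp\nu\ne\{\xo\}$ and $X$ is a martingale, \refT{Tekstrom-hobson} forces conditions \ref{eh-} and \ref{eh+} there, and combined with \refL{LH} these imply that $B$ on $[0,A_{t_0}]$ cannot enter any unbounded component of $\supp\nu^c$ (either $\xsoo_\pm$ are finite and confine $B$ to $[\xsoo_-,\xsoo_+]$, which rules out the unbounded components $(-\infty,\inf\supp\nu)$ and $(\sup\supp\nu,\infty)$; or $\supp\nu$ is unbounded on the relevant side and no such unbounded component exists at all). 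Since $[c_1,c_2]$ is bounded, optional stopping of Brownian motion on $[c_1,c_2]$ via the strong Markov property at $T\wedge H_a$ gives $\E[B_{T'}\mid\cF_{T\wedge H_a}]=B_{T\wedge H_a}$ on this event. Combining the two events and taking expectations yields $\E B_{T\wedge H_a}=\E B_{T'}=\E X_\sigma=\xo$. The main obstacle is precisely the boundedness of the accessible complement component in the last step; this is where the hypothesis $\supp\nu\ne\{\xo\}$ enters essentially, as \refE{Exo} shows what goes wrong without it.
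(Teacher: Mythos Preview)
Your proof is correct and takes a genuinely different route from the paper's.

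The paper first treats the special case $a\in\supp\nu$ (say $a<\xo$): there $\tau=\gG_{H_a}$ is the $X$-hitting time of $a$, so optional stopping gives $\E B_{A_{t_0}\wedge H_a}=\xo$ directly. For a general stopping time $T\le A_{t_0}$ it then uses a Fatou-based supermartingale sandwich: since $B_s-a\ge0$ for $s\le H_a$, one gets $\E B_{A_{t_0}\wedge H_a}\le\E B_{T\wedge H_a}\le\E B_0=\xo$, and the outer equality forces the inner ones. Finally, general $a$ is reduced to some $b\in\supp\nu$ (or $b\in\suppoo\nu$) via \refT{Tekstrom-hobson}.

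Your argument instead constructs the $\cG$-stopping time $\sigma=\gG_{T\wedge H_a}$ for arbitrary $T$ and $a$ at once, applies optional stopping to the uniformly integrable martingale $X$, and then bridges the gap between $X_\sigma=B_{T'}$ and $B_{T\wedge H_a}$ by bounded optional stopping on the component of $(\supp\nu)^c$ containing $B_{T\wedge H_a}$; \refT{Tekstrom-hobson} is used to guarantee that any accessible component is bounded. This avoids the supermartingale sandwich and the case split on whether $a\in\supp\nu$, but trades them for a geometric analysis of the complement of $\supp\nu$. Both proofs ultimately invoke \refT{Tekstrom-hobson}; the paper uses it only at the end for the reduction on $a$, whereas you use it structurally to control the support geometry. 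Your identification $A_\sigma=T'$ via \refL{LT} and the check that integrability of $B_{T\wedge H_a}$ follows from $B_{T'}=X_\sigma\in L^1$ are both sound.
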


It will follow from \refT{Tmartingale} and its proof that, more generally,
$\E B_T=\xo$ for any such $T$, so the $H_a$ is not really needed but it
simplifies the proof. 

\begin{proof}
  We suppose that $a<\xo$. (The case $a>\xo$ is similar and $a=\xo$ is
  trivial.)

Suppose first that $a\in\supp\nu$. Then \refL{LT} implies that a.s.\
$\gG_{H_a+\eps}>\gG_{H_a}$ for every $\eps>0$. Hence, if $\tau:=\gG_{H_a}$,
then $A_\tau=H_a$ and $X_\tau=B_{H_a}=a$; moreover, $X_s\neq a$ for
$s<H_a$. In other words, $\tau$ is the $X$-stopping time $\inf\set{s:X_s=a}$.
The assumption that $(X_t)_{t\le t_0}$ is a martingale thus implies, for
$t\le t_0$,
\begin{equation}
  \label{lm3}
\xo=\E X_{t\wedge\tau}
=\E B_{A_t\wedge A_\tau}
=\E B_{A_t\wedge H_a}.
\end{equation}

If $T'$ and $T''$ are two stopping times with $T'\ge T''$, then 
$\E\bigpar{B_{T'\wedge u}|\cF_{T'' }}=B_{T''\wedge u}$
for every $u\ge0$ (see e.g.\ \cite[Theorem 7.29]{Kallenberg}). 
If $T'\le H_a$, then $B_s-a\ge0$ for all $s\le T'$, and
Fatou's lemma yields
$\E(B_{T'}-a\mid\cF_{T''})\le B_{T''}-a$ and thus, by taking the
expectation,
\begin{equation}\label{mag}
  \E B_{T'}\le \E B_{T''}.
\end{equation}
We apply \eqref{mag} first to $A_{t_0}\wedge H_a$ and $T\wedge H_a$ and then
to $T\wedge H_a$ and 0, obtaining
\begin{equation}
 \E B_{A_{t_0}\wedge H_a} \le\E B_{T\wedge H_a} \le \E B_0=x_0,
\end{equation}
and \eqref{lm3} shows that we have equalities. This proves the result when
$a\in\supp\nu$.

In general, by \refT{Tekstrom-hobson}, either there exists some $b<a$ with
$b\in\supp\nu$, or there exists $b\in[a,\xo]$ with $b\in\suppoo\nu$.
In the first case $H_a<H_b$ and in the second case $H_a\ge H_b$ and
\refL{LH} implies that $A_{t_0}\le H_b$ and thus $T\le A_{t_0}\le H_b\le
H_a$. Consequently, in both cases $T\wedge H_a =T\wedge H_a \wedge H_b$, and
the result follows from the case just proven applied to $T\wedge H_a $ and $b$.
\end{proof}

\begin{theorem}\label{Tmartingale} 
The following are equivalent, for any $t_0>0$.
\begin{romenumerate}
\item \label{tmm}
$(X_t)_{t\ge0-}$ is a martingale.
\item \label{tmt0}
$(X_t)_{0-\le t\le t_0}$ is a martingale.
\item \label{tmh}
$\supp\nu=\set\xo$ or
$\xo\in\suppoo\nu$ or
  \begin{equation}\label{tm}
\int_{\xo}^\infty(1+ |x|)\nu(dx)	
=
\int_{-\infty}^{\xo}(1+ |x|)\nu(dx)	
=\infty.
  \end{equation}

\item \label{tmh0}
$\supp\nu=\set\xo$ or
$\xo\in\suppoo\nu$ or
  \begin{equation}\label{tm0}
\int_{\xo}^\infty |x-\xo|\,\nu(dx)	
=
\int_{-\infty}^{\xo} |x-\xo|\,\nu(dx)	
=\infty.
  \end{equation}
\end{romenumerate}
\end{theorem}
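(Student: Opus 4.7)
The plan is to close the cycle $(\mathrm{i})\Rightarrow(\mathrm{ii})\Rightarrow(\mathrm{iv})\Leftrightarrow(\mathrm{iii})\Rightarrow(\mathrm{i})$. The implication $(\mathrm{i})\Rightarrow(\mathrm{ii})$ is immediate, and $(\mathrm{iii})\Leftrightarrow(\mathrm{iv})$ reduces to comparing integrands: in the non-trivial case $\xo\notin\suppoo\nu$ makes $\nu$ finite on some neighbourhood of $\xo$, and outside any compact neighbourhood of $\xo$ the ratio $(1+|x|)/|x-\xo|$ is bounded above and below, so the two tail integrals diverge together.

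The engine of both substantial implications is Tanaka's formula applied at the bounded stopping time $A_t\wedge n$. Since the stochastic integral of the bounded integrand $\etta\{B_s>a\}$ against $B$ up to a bounded time has zero mean, taking expectations and letting $n\to\infty$ by monotone convergence on both $L^a_{A_t\wedge n}$ and the submartingale $(B_u-a)^+$ yields
\[ \E L_{A_t}^a \;=\; 2\bigl[\E(X_t-a)^+ - (\xo-a)^+\bigr] \]
whenever $X_t\in L^1$. Combined with the occupation-time identity $\int L_{A_t}^a\,d\nu(a)=\gG_{A_t}\le t$ from \refL{LM1}--\refL{LM2}, this reduces the whole theorem to controlling $\E L_{A_t}^a$ as $|a|\to\infty$.

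For $(\mathrm{iii})\Rightarrow(\mathrm{i})$: the two trivial clauses give $X_t\equiv\xo$ by \refL{Lsupp} and \refL{LH}. Otherwise the integral hypothesis implies the support conditions of \refT{Tekstrom-hobson}, so $X$ is a local martingale with localizing sequence $\tau_n=\gG_{H_n+}$ as in that proof. To upgrade to a true martingale I show $\{X_{t\wedge\tau_n}\}_n$ is uniformly integrable by combining the bilateral divergence of $\int(1+|x|)\,d\nu$ with \refL{LM3} (which supplies $\E B_{T\wedge H_a}=\xo$ for every stopping time $T\le A_t$ and every $a$); the divergence rules out any net mass escaping to $\pm\infty$, giving $B_{A_t\wedge H_n}\to X_t$ in $L^1$ and hence the martingale property. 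For $(\mathrm{ii})\Rightarrow(\mathrm{iii})$: assume $X$ is a martingale on $[0,t_0]$ and, for contradiction, that $\int_{\xo}^\infty(1+|x|)\,d\nu<\infty$ in the non-trivial case. The martingale hypothesis furnishes the UI needed for the Tanaka identity, so $\E L_{A_t}^a=2\E(X_t-a)^+\to 0$ as $a\to\infty$; on the other hand the light right tail of $\nu$, combined with Ray--Knight control of $\E L_{H_a}^x$ and the occupation-time bound, forces $\E L_{A_t}^a$ to stay bounded below by a positive constant for all large $a$, a contradiction. Since $(\mathrm{iii})$ makes no reference to $t_0$, this also yields $(\mathrm{i})\Leftrightarrow(\mathrm{ii})$.

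The main obstacle throughout is precisely this boundary analysis of $\E L_{A_t}^a$ as $|a|\to\infty$: in $(\mathrm{iii})\Rightarrow(\mathrm{i})$ one must convert a divergence hypothesis into uniform integrability, and in $(\mathrm{ii})\Rightarrow(\mathrm{iii})$ a finiteness hypothesis into a quantitative failure of decay. Both pass through the Tanaka identity and the occupation-time formula, and the integral condition in $(\mathrm{iii})$ is exactly the dichotomy between sufficient and insufficient decay of $\E L_{A_t}^a$ at infinity.
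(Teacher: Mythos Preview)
Your outline has two genuine gaps, one in each of the substantive implications.

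\textbf{Circularity in $(\mathrm{iii})\Rightarrow(\mathrm{i})$.} You invoke \refL{LM3} to obtain $\E B_{T\wedge H_a}=\xo$ for stopping times $T\le A_t$, but the hypothesis of \refL{LM3} is precisely that $(X_t)_{t\le t_0}$ is a martingale---this is what you are trying to prove. The same circularity contaminates your Tanaka identity: passing from $\E L^a_{A_t\wedge n}=2[\E(B_{A_t\wedge n}-a)^+-(\xo-a)^+]$ to the limit $\E L^a_{A_t}=2[\E(X_t-a)^+-(\xo-a)^+]$ requires $L^1$ convergence of $B_{A_t\wedge n}$ to $X_t$, i.e.\ uniform integrability, which is exactly the conclusion you seek. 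Fatou only gives one inequality, and it points the wrong way for bounding $\E L^a_{A_t}$ from above. You need an independent mechanism to manufacture uniform integrability from the divergence hypothesis alone.

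\textbf{Unsubstantiated lower bound in $(\mathrm{ii})\Rightarrow(\mathrm{iii})$.} Granting the Tanaka identity (which here is legitimate since (ii) supplies UI), you correctly note $\E L^a_{A_t}=2\E(X_t-a)^+\to0$ as $a\to\infty$. But the asserted contradiction---that a light right tail of $\nu$ forces $\E L^a_{A_t}$ to be bounded below for large $a$---is not explained. Ray--Knight describes $L^x_{H_a}$, not $L^a_{A_t}$; the relationship between $A_t$ and $H_a$ depends on the entire measure $\nu$ (including its left tail), and I do not see how the occupation-time bound $\int \E L^x_{A_t}\,\nu(dx)\le t$ yields any pointwise lower bound at large $a$. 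You have identified the right quantity but not a workable estimate.

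\textbf{How the paper proceeds.} The paper avoids both difficulties by working not with local time at individual levels but with a single convex function $\psi$ whose second derivative is $2|x|\,\nu(dx)$. Under \eqref{tm0}, $\psi(x)/|x|\to\infty$, so It\^o--Tanaka for $\psi(B)$ together with Gronwall's inequality gives $\E\psi(B_{A_t\wedge u})\le Ce^t$ uniformly in $u$, and the de la Vall\'ee Poussin criterion then delivers UI directly---no appeal to \refL{LM3}. For the converse, the same It\^o--Tanaka identity, now using that $\psi$ grows at most linearly under the light-tail hypothesis, combines with \refL{LM3} (legitimate here, since (ii) is assumed) to produce the quantitative bound $t_0\le 4\int_{\xo}^\infty y\,\nu(dy)$; a strong Markov shift to an arbitrary starting point $z>\xo$ in $\supp\nu$ then gives $t_0/2\le 4\int_z^\infty y\,\nu(dy)\to0$, the desired contradiction.
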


\begin{remark}
For a related result for non-negative diffusion processes, see \cite{DS}.
\end{remark}

\begin{proof}
The result is trivial if $\supp\nu=\set\xo$ or $\xo\in\suppoo\nu$. 
We may thus assume $\supp\nu\neq\set\xo$ and
$-\infty\le\xsoo_-<\xo<\xsoo_+\le\infty$. The equivalence of \eqref{tm} and
\eqref{tm0} then is elementary.

Let
\begin{align}
  \gf(x)&:=
  \begin{cases}
	\phantom{{}-{}}2\int_{[\xo,x)}|y|\,\nu(dy), & x\ge \xo,\\
  {}-2\int_{[x,\xo)}|y|\,\nu(dy), & x< \xo,
  \end{cases}
\intertext{and}
  \psi(x)&:=
  \begin{cases}
	\phantom{{}-{}}\int_{\xo}^x\gf(y)\,dy, & x\ge \xo,\\
	{}-\int_x^{\xo}\gf(y)\,dy, & x< \xo.
  \end{cases}
\end{align}
Then $\psi$ is a non-negative convex function on $(\xsoo_-,\xsoo_+)$
with left derivative $\gf$ and thus second derivative (in distribution
sense)
$\psi''(x)=2|x|\nu(dx)$.

By the It\^o--Tanaka formula 
\cite[Theorem VI.(1.5)]{RY}
and \refL{LM1}, for $u\le\hoo$,
\begin{equation}\label{m1}
  \begin{split}
\psi(B_u)
&=\int_0^u\gf(B_s)\,dB_s +
\intR L_u^x\,|x|\,\nu(dx)	
=\int_0^u\gf(B_s)\,dB_s + \int_0^u|B_s|\,d\gG_s.
  \end{split}
\end{equation}

Let $H:=H_{\set{a,b}}$ where $\xsoo_-<a<\xo<b<\xsoo_+$. Then $H<\hoo$. Further,
$\gf(B_s)$ is bounded for $s\le H$, and  consequently   
$\int_0^{u\wedge H}\gf(B_s)\,dB_s $, $u\ge0$, is a martingale.
Hence, for every bounded stopping time $T\le H$,
$\E\int_0^{T}\gf(B_s)\,dB_s=0 $, and \eqref{m1} yields
\begin{equation}\label{m2}
\E(\psi(B_{T}))
= \E \int_0^{T}|B_s|\,d\gG_s.
\end{equation}
Moreover, for any stopping time $T\le H$, we can apply \eqref{m2} to 
$T\wedge u$ and let $u\to\infty$. Since $T\le H$ we have 
$B_{T\wedge u}\in[a,b]$, and thus  $\psi(B_{T\wedge u})$ is uniformly
bounded; hence
dominated convergence on the left-hand side and monotone convergence on the
right-hand side shows that \eqref{m2} holds for any stopping time $T\le H$.

We apply \eqref{m2} first to $T=A_r\wedge H\wedge u$ for some $r,u\ge0$.
Thus, using \refL{LM2} and $\gG_{A_r}\le r$, see \refRR{RRdef}\ref{RR<},
\begin{equation}\label{m3}
  \begin{split}
\E&(\psi(B_{A_r\wedge H\wedge u}))
= \E \int_0^{A_r\wedge H\wedge u}|B_s|\,d\gG_s
= \E \int_0^{A_r\wedge H\wedge u}|B_{s\wedge H\wedge u}|\,d\gG_s
\\
&= \E \int_0^{\gG_{A_r\wedge H\wedge u}}|B_{A_t\wedge H\wedge u}|\,dt
\le \E \int_0^{r}|B_{A_t\wedge H\wedge u}|\,dt
=
 \int_0^{r}\E|B_{A_t\wedge H\wedge u}|\,dt.	
  \end{split}
\end{equation}

Suppose first that \eqref{tm0} holds. 
By translation we may assume that $\xo=0$.
Then $\gf(x)\to\pm\infty$ as
$x\to\pm\infty$, and thus $\psi(x)/|x|\to\infty$ as $x\to\pm\infty$.
In particular, $\psi(x)\ge|x|$ for large $|x|$, and thus $|x|\le\psi(x)+C$
for some constant $C$.
Consequently, \eqref{m3} yields
\begin{equation*}
\E|B_{A_r\wedge H\wedge u}|
\le C+\E(\psi(B_{A_r\wedge H\wedge u}))
\le
C+ \int_0^{r}\E|B_{A_t\wedge H\wedge u}|\,dt.	
\end{equation*}
Note further that $B_{A_t\wedge H\wedge u}$ is bounded by $\max\set{|a|,b}$,
so the expectations here are finite and bounded. We can thus apply
Gronwall's Lemma \cite[Appendix \S1]{RY} and conclude
\begin{equation}
  \E|B_{A_t\wedge H\wedge u}| \le Ce^t.
\end{equation}
Using this in \eqref{m3} again yields the sharper estimate
\begin{equation}
  \E(\psi(B_{A_t\wedge H\wedge u})) \le Ce^t.
\end{equation}
Now let $a\to\xsoo_-$ and $b\to\xsoo_+$; then
$H\to\hoo=H_{\set{\xsoo_-,\xsoo_+}}$. 
Since $A_t\le \hoo$ by \refL{LH}, 
$B_{A_t\wedge H\wedge u}\to B_{A_t\wedge u}$, and Fatou's lemma yields
\begin{equation}\label{tm2}
  \E(\psi(B_{A_t\wedge u})) \le Ce^t.
\end{equation}

Since $\psi(x)/|x|\to\infty$ as $x\to\pm\infty$, \eqref{tm2} implies that
for a fixed $t$, the random variables $B_{A_t\wedge u}$, $u\ge 0$, 
are uniformly integrable \cite[Theorem 5.4.3]{Gut}.
Moreover, $B_{A_t\wedge u}$, $u\ge 0$, is a martingale, and since it is
uniformly integrable, 
$B_{A_t\wedge u}=\E(B_{A_t}\mid\cF_u)$, for all $u\ge0$, and further, see
\cite[Theorem 7.29]{Kallenberg}, for any $ s\le t$,
\begin{equation}
\E(B_{A_t}\mid\cF_{A_s})
=
B_{A_t\wedge A_s}=
B_{A_s}.
\end{equation}
Hence $X_t=B_{A_t}$ is a martingale when \eqref{tm0} holds, so
\ref{tmh0}$\implies$\ref{tmm}$\implies$\ref{tmt0}. 

Conversely, suppose that \ref{tmt0} holds but
\eqref{tm} fails; by symmetry we may assume that
\begin{equation}
  \label{contra}
\int_{\xo}^\infty (1+|x|)\,\nu(dx)	<\infty.
\end{equation}
In particular, this shows that $\suppoo\nu\cap(\xo,\infty)=\emptyset$, so
$\xsoo_+=\infty$. 
By translation, we may now also assume that
$\xsoo_-<0<\xo$, and 
we now take $a=0$ and any $b>\xo$.
Thus $H=H_{\set{a,b}}=H_0\wedge H_b$.
%
We  apply \eqref{m2}  with $T=A_t\wedge H$.
Noting that $H\le H_0$ and $B_s\ge0$ for $s\le H$, we obtain, using also
\refL{LM2},  
\begin{equation}
  \label{m5a}
  \begin{split}
\E \psi\bigpar{B_{A_t\wedge H_0\wedge H_b}}
&=
\E \int_0^{A_t\wedge H}	 B_s\,d\gG_s
=
\E \int_0^{A_t\wedge H}	 B_{s\wedge H_0}\,d\gG_s
\\&
=
\E \int_0^{\gG_{A_t\wedge H}} B_{A_r\wedge H_0}\,dr
.
  \end{split}
\end{equation}
Since $H<\hoo$,  \refL{Lgat} shows that
\begin{equation*}
  \gG_{A_t\wedge H}=  \gG_{A_t}\wedge\gG_{H}
= t\wedge\gG_{\hoo}\wedge\gG_{H}
= t\wedge\gG_{H}
= t\wedge\gG_{H_0}\wedge\gG_{H_b}.
\end{equation*}
 Further, if $r\ge \gG_{H_0}$ then $A_r\ge H_0$ by \eqref{rr<}
and thus 
$B_{A_r\wedge  H_0}=B_{H_0}=0$. 
Hence, \eqref{m5a} yields
\begin{equation}
  \label{m5b}
  \begin{split}
\E \psi\bigpar{B_{A_t\wedge H_0\wedge H_b}}
=
\E \int_0^{t\wedge\gG_{H_0}\wedge \gG_{H_b}}	 B_{A_r\wedge H_0}\,dr
=
\E \int_0^{t\wedge \gG_{H_b}}	 B_{A_r\wedge H_0}\,dr
.
  \end{split}
\end{equation}

Let $C':=2\int_0^\infty y\,\nu(dy)$. Then $|\gf(x)|\le C'$ for $x\ge 0$, and
thus, for any $x\ge 0$,
\begin{equation}
  \psi(x)\le \psi(0)+C'x\le \psi(\xo)+C'\xo+C'x=C'\xo+C'x.
\end{equation}
Thus, for $t=t_0$,
the left-hand side of \eqref{m5b} is at most, using \refL{LM3}, 
\begin{equation}
\E \bigpar{C'\xo+C'B_{A_{t}\wedge H_0\wedge H_b}}  
=2C'\xo.
\end{equation}
Consequently \eqref{m5b} yields
\begin{equation}\label{ol}
  2C'\xo \ge \E \int_0^{t_0\wedge \gG_{H_b}}	 B_{A_r\wedge H_0}\,dr.
\end{equation}
Letting $b\to\infty$ we have $H_b\to\infty$ and $\gG_{H_b}\to\infty$, and
thus \eqref{ol} yields by monotone convergence, using \refL{LM3} again,
\begin{equation*}
  2C'\xo \ge \E \int_0^{t_0} B_{A_r\wedge H_0}\,dr
=
 \int_0^{t_0} \E B_{A_r\wedge H_0}\,dr
=t_0\xo.
\end{equation*}
Consequently,
\begin{equation}\label{jb}
  t_0 \le 2C'= 4\int_0^\infty y\,\nu(dy).
\end{equation}
This shows that \ref{tmm}$\implies$\ref{tmh}, since we then may choose $t_0$
arbitrarily large in \eqref{jb} and obtain a contradiction.
To extend this to \ref{tmt0} with a given $t_0$, we argue as follows,
assuming \ref{tmt0} and \eqref{contra}.

We have derived \eqref{jb} under the assumption $\xsoo_-<0<\xo$. By
translation, we have in general, for any $\xo$ and $\xsoo_-$, and any
$a\in(\xsoo_-,\xo)$, 
\begin{equation}
  t_0 \le  4\int_a^\infty y\,\nu(dy).
\end{equation}
Letting $a\to\xo$ yields
\begin{equation}\label{jc}
  t_0 \le 4\int_{\xo}^\infty y\,\nu(dy).
\end{equation}

Take any $z>\xo$ with $z\in\supp\nu$, and let $\tau:=\gG_{H_z}$;
by \refL{LM0}, $\tau=\inf\set{t:X_t=z}$, so $\tau$ is an $X$-stopping time.
Condition on the event
$\cE
:=\set{\tau\le t_0/2}$; we have $\P(\cE)>0$
by \refL{LM0}.
On the event $\cE$, $B_u'=B_{H_z+u}$ is a Brownian motion starting at $z$,
and the processes corresponding to $\gG$, $A$ and $X$ defined by $B'$ are
$\gG'_u=\gG_{H_z+u}-\gG_{H_z}=\gG_{H_z+u}-\tau$,
$A'_t=A_{t+\tau}-H_z$ and
$X'_t=B'_{A'_t}=X_{t+\tau}$.
Since $\tau$ is an $X$-stopping time, on $\cE$, 
$X'_t$ is a martingale for $0\le t\le t_0/2$, and also at $t=0-$ since
$X'_{0}=z=X'_{0-}$, cf.\ \refL{L0}.
Consequently, we may apply the result above to $X'$, and \eqref{jc} yields 
\begin{equation}\label{jd}
 \frac{t_0}2 \le 4\int_{z}^\infty y\,\nu(dy).
\end{equation}
However, if \eqref{contra} holds, then the right-hand side of \eqref{jd}
tends to 0 as $z\to\infty$, which is a contradiction.
This contradiction shows that \eqref{contra} cannot hold, and thus
\ref{tmt0}$\implies$\ref{tmh}.
\end{proof}

Note that the case $\xo\in\suppoo\nu$ is not redundant in \ref{tmh}. 
An example is given by $\xo=0$ and $\nu(dx)=dx/x$ for $x>0$; then \eqref{tm}
fails because the second integral vanishes, nevertheless $X_t=\xo=0$ for all
$t$, which trivially is a martingale.

\section{Open problems}
\label{openproblems}

\subsection{Weakening the assumptions}
\refT{cont} shows the existence of a generalised diffusion solving the inverse problem
provided the given distribution has finite expectation. 
Does the result hold without this assumption (allowing local martingales instead
of only martingales)?

\subsection{Uniqueness}
\refT{cont} shows 
the existence of a generalised diffusion, defined by a
speed measure $\nu$, with a given distribution of $X_1$ (assuming $\E|X_1|$
is finite).
Is the  measure $\nu$ unique?
Strictly speaking it is not: 
\refL{LB} shows that we will never leave the interval
$[\xsoo_-,\xsoo_+]$, so any change of $\nu$ outside this interval (assuming at
least one of $\xsoo_-$ and $\xsoo_+$ is finite) will not
affect $X_t$ at all. We may normalize $\nu$ by first taking the restriction
$\nu_0$ to $(\xsoo_-,\xsoo_+)$; if $\xsoo_-$ is finite but
$\xsoo_-\notin\suppoo\nu_0$ we also add an infinite point mass at $\xsoo_-$,
and similarly for $\xsoo_+$.
The question is then: Given the distribution of $X_1$, is there a unique
corresponding normalized speed measure such that the corresponding process
$(X_t)_{t\le1}$ is a martingale? 


The problem of uniqueness is also open in the discrete case, i.e. 
is the mapping $G:\bn\to\pin$ 
studied in \refS{discrete} an injection?
(As remarked in the proof, this holds for $n=1$.)

\subsection{Relations between $\mu$ and $\nu$}
Find relations between the speed measure $\nu$ (assumed normalized as above)
and the distribution
$\mu$ of $X_1$. For example, if $\mu$ has a point mass at $x$, does $\nu$
also have a point mass there? Does the converse hold? If $\nu$ is absolutely
continuous, is $\mu$ too? Does the converse hold?
For which $\nu$ does $\mu$ have a finite second moment?

\begin{ack}
  We thank Tobias Ekholm for helpful comments. 
\end{ack}

\end{document}